\documentclass{article}%
\usepackage{amsmath}
\usepackage{amssymb}
\usepackage{amsmath}
\usepackage{amsfonts}
\usepackage{graphicx}
\usepackage[latin1]{inputenc}
\usepackage[english]{babel}
\usepackage{dsfont}
\usepackage{hyperref}
\usepackage{url}
\usepackage{abstract} 
\usepackage{stmaryrd}
\usepackage{authblk}  

\setcounter{MaxMatrixCols}{30}
\usepackage{vmargin}
\setmarginsrb{2.5cm}{2.5cm}{2.5cm}{2.5cm}{0cm}{0cm}{1.25cm}{1.25cm}
\providecommand{\U}[1]{\protect\rule{.1in}{.1in}}
\newtheorem{theorem}{Theorem}

\newtheorem{corollary}[theorem]{Corollary}

\newtheorem{definition}[theorem]{Definition}

\newtheorem{lemma}[theorem]{Lemma}

\newtheorem{proposition}[theorem]{Proposition}
\newtheorem{remark}[theorem]{Remark}

\newenvironment{proof}[1][Proof]{\noindent\textbf{#1.} }{\ \rule{0.5em}{0.5em}}
\begin{document}
	\title{$\mathbb{L}^p$-solutions $(1 <p< 2)$ for reflected BSDEs with general jumps and stochastic monotone generators}
	
	%
	
	\author[1]{Badr Elmansouri\thanks{Corresponding author.~This author was supported by the National Center for Scientific and Technical Research (CNRST), Morocco.}%
		\thanks{\href{https://orcid.org/0000-0003-2603-894X}{ORCID: 0000-0003-2603-894X}}%
		\thanks{Email: \href{mailto:badr.elmansouri@edu.uiz.ac.ma}{badr.elmansouri@edu.uiz.ac.ma}}}

	\author[1]{Mohamed El Otmani\thanks{\href{https://orcid.org/0000-0002-9674-6469}{ORCID: 0000-0002-9674-6469}}%
		\thanks{Email: \href{mailto:m.elotmani@uiz.ac.ma}{m.elotmani@uiz.ac.ma}}}
	
	\author[2]{Mohamed Marzougue\thanks{Email: \href{mailto:m.marzougue@uae.ac.ma}{m.marzougue@uae.ac.ma}}}
	
	\affil[1]{LAMA Laboratory, Faculty of Sciences Agadir, Ibn Zohr University,
		Hay Dakhla, BP8106, Agadir, Morocco.}
	
	\affil[2]{LAR2A Laboratory, Faculty of Sciences Tetouan,\\
		Abdelmalek Essaadi University, 93000 Tetouan, Morocco.}
	
	%
	%
	\date{}
	\maketitle
	\begin{abstract}
		We consider a one-reflected backward stochastic differential equation with a general RCLL barrier in a filtration that supports a Brownian motion and an independent Poisson random measure. We establish the existence and uniqueness of a solution in $\mathbb{L}^p$ for $p \in (1,2)$. The result is obtained by means of the penalization method, under the assumption that the coefficient is stochastically monotone with respect to the state variable $y$, stochastically Lipschitz with respect to the control variables $(z,u)$, and satisfies suitable linear growth and $p$-integrability conditions.
		\vspace{0.3cm}
		
		\noindent \textbf{Keywords:} Reflected backward stochastic differential equations with jumps, $\mathbb{L}^p$-solutions, penalization method, stochastic monotone coefficients, stochastic Lipschitz coefficients.  
		
		\noindent \textbf{MSC 2020:} 60H10, 60H15, 60H30.
	\end{abstract}

	\section{Introduction}
Non-linear backward stochastic differential equations (BSDEs in short) were initially introduced by Pardoux and Peng \cite{PP}. Precisely, given a data $(\xi,f)$ of a square integrable random variable $\xi$  and a progressively measurable function $f$, a solution of BSDEs associated with data $(\xi,f)$ is a pair of $\mathcal{F}_t$-adapted processes $(Y,Z)$ satisfying
\begin{equation*}
	Y_{t}=\xi +\int_{t}^{T} f(s,Y_{s},Z_{s})ds-\int_{t}^{T}Z_{s}dB_{s},\quad 0\leq t\leq T,
\end{equation*}
where $B$ is a standard Brownian motion. Since Pardoux and Peng have established the existence and uniqueness of $L^2$ solutions for multi-dimensional BSDEs, the theory of BSDEs has been intensively developed in the last years, the great interest in this theory comes from its connections with many other fields of research, such as mathematical finance \cite{EPeQ,EQ}, stochastic control and stochastic games \cite{EH} and partial differential equations \cite{PP92}. In particular, many efforts have been devoted to existence and uniqueness results under weaker assumptions on the data. In this context, some works were devoted to weaken the square integrability on the coefficient and the terminal value. We are talking about $\mathbb{L}^p$ solutions where $p \in (1,2)$. In general, the $\mathbb{L}^p$ ($p>1$) solutions of BSDEs were investigated, at the first time, by El Karoui et al. \cite{EPeQ} under the Lipschitz coefficient. After that, Briand et al. \cite{BDHPS} have proved an existence result of $L^p$ ($p>1$) solutions for multi-dimensional BSDEs where $f$ satisfies a kind of monotonicity condition with a general growth in $y$. Since then, many researchers have continued to develop an array of further researches. In this context, Briand et al. \cite{BLS} have established an existence result of $\mathbb{L}^p$ ($p>1$) solutions for one-dimensional BSDEs where the generator $f$ satisfies the monotonicity condition in $y$ employed in Briand et al. \cite{BDHPS} and a linear growth condition in $z$, Chen \cite{Ch} have proved the existence and uniqueness of $\mathbb{L}^p$ ($p \in (1, 2]$) solutions for one-dimensional BSDEs when the generator $f$ satisfies a uniform continuity condition in $(y,z)$, Ma et al. \cite{MFS} have investigated the existence and uniqueness result for $\mathbb{L}^p$ ($p> 1$) solutions of one-dimensional BSDEs when the generator is monotonic with a general growth in $y$, and uniformly continuous in $z$, Tian et al. \cite{TJS} have considered a one-dimensional BSDEs and proved an existence theorem of $L^p$ ($p>1$) solutions whose generator satisfies a kind of discontinuous condition in $y$ and is uniformly continuous in $z$. Particularly, Fan \cite{Fan2015} have obtained the existence and uniqueness of $\mathbb{L}^p$ ($p>1$) solutions for multi-dimensional BSDEs under a ($p\wedge2$)-order weak monotonicity condition with a general growth in $y$ and a Lipschitz condition in $z$ for the generator. Recently, Wang et al. \cite{WLF} have established the existence of a minimal (maximal) $\mathbb{L}^p$ ($p \in (1,2]$) solution to a one-dimensional BSDEs, where the generator satisfies a $p$-order weak monotonicity condition together with a general growth condition in $y$ and a linear growth condition in $z$.

We especially mention that El Karoui and Huang \cite{ELH} are the first which considered the so-called stochastic Lipschitz coefficient, where the Lipschitz constant is replaced by a stochastic process. They authors have considered a general time interval BSDEs driven by a general RCLL martingale, and some stronger integrability conditions on the generator and terminal value. In this spirit, Bender and Kohlmann \cite{BK} have proved an existence and uniqueness result for the $L^2$ solution. After that, Wang et al. \cite{WRC} have studied the existence and uniqueness of $\mathbb{L}^p$ ($p>1$) solution for BSDEs.

Tang and Li \cite{TL} have introduced into the BSDEs a jumps term (BSDEJs in short) that is driven by a Poisson random measure independent of the Brownian motion. The authors have proved the existence of a unique solution with a Lipschitz generator and square integrable terminal value. A solution of BSDEJs associated with data $(\xi,f)$ is a triplet of $\mathbb{F}$-adapted processes $(Y,Z,U)$ satisfying
\begin{equation*}
	Y_{t}=\xi+\int_{t}^{T} f(s,Y_{s},Z_{s},U_s)ds-\int_{t}^{T}Z_{s}dB_{s}-\int_t^T\int_{\mathcal{U}}U_s(e)\tilde{\mu}(ds,de),\quad 0\leq t\leq T,
\end{equation*}
where $\tilde{\mu}$ is a compensated Poisson random measure. Then, Barles et al. \cite{BBP} have showed that the well-posedness of BSDEJs gives rise to a viscosity solution of a semi-linear parabolic partial integro-differential equation. Later, Rong \cite{Ron} degenerated the monotonicity condition of the generator to a weaker version so as to remove the Lipschitz condition on variable $z$. Now, regarding the case of $L^p$ solutions, Kruse and Popier \cite{Kruse17112017,Kruse18052016} have analyzed multi-dimensional BSDEs in a general filtration that supports a Brownian motion and a Poisson random measure. The authors have established the existence and uniqueness of solutions under monotonicity assumption on the driver in $\mathbb{L}^p$ space provided that the generator and the terminal value satisfy appropriate integrability conditions. Recently, Yao \cite{Yao} has studied $\mathbb{L}^p$ ($p \in (1,2)$) solutions of a multi-dimensional BSDEJs whose generator may not be Lipschitz continuous in $(y,z)$. The author has showed the existence and uniqueness of solution by approximating the monotonic generator by a sequence of Lipschitz generators.

El Karoui et al. \cite{EKPPQ} have introduced the notion of reflected BSDEs (RBSDEs in short), which is a BSDEs but the solution is forced to stay above a given process called barrier (or obstacle). Once more under square integrability of the terminal condition and the barrier and Lipschitz property of the coefficient, the authors have proved the existence and uniqueness results in the case of a Brownian filtration and a continuous barrier. Many efforts have been devoted to existence and uniqueness results under weaker assumptions, in this context, Hamad\`{e}ne \cite{H} has considered the RBSDEs when the barrier is discontinuous, Hamad\`{e}ne and Ouknine \cite{HO} have studied the RBSDEs when the noise is driven by a Brownian motion and an independent Poisson measure (that is the generalization of the work of El Karoui et al. \cite{EKPPQ}) and after RBSDEs with "only" \textit{rcll} obstacle in \cite{Ess,HO1}. On the other hand, Hamadène and Popier \cite{HP} have studied the existence and uniqueness for $L^p$ ($1<p<2$) solutions in a Brownian framework and Lipschitz coefficient, the same frame to Hamadène and Popier \cite{HP} has been treated by Rozkosz and Slominski \cite{RS} but the generator satisfying the monotonicity condition. Recently, Yao \cite{Yao1} has considered RBSDEs with jumps (RBSDEJs in short) whose generator is Lipschitz continuous in $(y,z,v)$, the author has established the existence and uniqueness of a $\mathbb{L}^p$ ($p \in (1,2)$) solutions of such equations via a fixed point theorem. There are many recent developments on $L^p$ solutions of RBSDEJs in various interesting directions, namely Klimsiak \cite{K,Kl}, Eddahbi et al. \cite{EFO}, El Jamali \cite{ElJamali}, Elmansouri and Marzougue \cite{ELMANSOURI2025110407}, Li and Wei \cite{LI20141582} and so on. and so on.

The main objective of this paper is to continue the developments mentioned above and to study one-dimensional reflected backward stochastic differential equations with jumps (RBSDEJs), where the obstacle is only assumed to be RCLL with general jumps, and the noise is driven by a Brownian motion and an independent Poisson random measure. We focus on the case where the terminal condition and the generator are only $p$-integrable for some $p \in (1,2)$. Moreover, we are interested in the case where the generator satisfies a \textit{stochastic monotonicity condition} with respect to the state variable $y$, and a \textit{stochastic Lipschitz condition} with respect to the control variables $(z,u)$, along with suitable linear growth assumptions. It is worth noting that the stochastic Lipschitz (or monotonic) condition arises naturally in many applications (e.g., in finance as in \cite{EMS,Marzougue2020}), where the usual Lipschitz condition is too restrictive to be satisfied. Our goal is to establish existence and uniqueness of solutions for such equations using \textit{penalization methods} combined with the \textit{Banach fixed-point theorem}. Additionally, compared to the existing literature on $\mathbb{L}^p$-solutions for $p \in (1,2)$, our work improves upon all known results involving stochastic monotonicity, including the recent works of Elmansouri and El Otmani \cite{elmansouri2025,badrGBSDE}, and Li and Fan \cite{li2024weightedlppgeq1solutionsrandom}.

The paper is organized as follows. In Section \ref{s2}, we introduce the notations, assumptions, and preliminary results required throughout the paper, and we define the class of RBSDEJs under consideration. Section \ref{s4} is devoted to establishing the existence and uniqueness of $\mathbb{L}^p$-solutions using a penalization method.

\section{Preliminaries and notations}\label{s2}
Let $T > 0$ be a fixed time horizon and let $p \in (1,2)$. Consider a filtered probability space $(\Omega, \mathcal{F},\mathbb{F} := (\mathcal{F}_t)_{t \leq T}, \mathbb{P})$, where the filtration $\mathbb{F}$ is assumed to be complete and right-continuous. We further assume that $\mathbb{F}$ is generated by a $d$-dimensional Brownian motion $(B_t)_{t \leq T}$ and an independent compensated Poisson random measure $\tilde{\mu}$, associated with a standard Poisson random measure $\mu$ defined on $\mathbb{R}^+ \times \mathcal{U}$. Here, $\mathcal{U} := \mathbb{R}^d \setminus \{0\}$ (with $d \geq 1$) is equipped with its Borel $\sigma$-algebra $\mathbb{U}$, and the compensator of $\mu$ is given by $\nu(dt, de) = dt \lambda(de)$. The process $\{\tilde{\mu}([0,t] \times \mathcal{G}) = (\mu - \nu)([0,t] \times \mathcal{G})\}_{t \leq T}$ defines a martingale for every measurable set $\mathcal{G} \in \mathbb{U}$ such that $\lambda(\mathcal{G}) < +\infty$. The compensator measure $\lambda$ is assumed to be a $\sigma$-finite measure on $\mathcal{U}$ satisfying the integrability condition
$$
\int_{\mathcal{U}} (1 \wedge |e|^2) \lambda(de) < +\infty.
$$

We adopt the following notational conventions:
\begin{itemize}
	\item $|.|$ denotes the Euclidean norm in $\mathbb{R}^{d}$ for $d \geq 1$.
	\item For all $x \in \mathbb{R}$, we denote $x^+ := \max(x, 0)$ and $x^- := -\min(x, 0)$.
	\item $\mathcal{T}_{[t,T]}$ refers to the collection of stopping times $\tau$ such that $\tau \in [t,T]$, for every fixed $t \in [0,T]$.
	\item $\mathcal{T}^p_{[t,T]}$ denotes the set of $\mathbb{F}$-predictable stopping times $\tau$ satisfying $\tau \in [t,T]$, for any given $t \in [0,T]$.
	\item $\mathcal{P}$ designates the predictable $\sigma$-algebra on the product space $\Omega \times [0,T]$.
	\item $\mathcal{B}(\mathbb{R}^{d})$ represents the Borel $\sigma$-algebra on $\mathbb{R}^{d}$.
\end{itemize}

Given a right-continuous process with left limits paths (RCLL) $(\varpi_t)_{t \leq T}$, we define the left limit of $\varpi$ at any time $t \in (0,T]$ by $\varpi_{t-} := \lim\limits_{s \nearrow t} \varpi_s$, with the convention that $\varpi_{0-} := \varpi_0$. The left-limit process is denoted by $\varpi_- := (\varpi_{t-})_{t \leq T}$. The jump of $\varpi$ at time $t \in [0,T]$ is given by $\Delta \varpi_t := \varpi_t - \varpi_{t-}$.  For a finite variation, RCLL, $\mathbb{R}$-valued process ${K}$, we denote by ${K}^c$ and ${K}^d$ the continuous and purely discontinuous parts of ${K}$, respectively, satisfying the decomposition ${K} = {K}^c + {K}^d$ with ${K}^c_0={K}_0$ and ${K}^d = \sum_{0 < s \leq \cdot} \Delta {K}_s$. For notational convenience throughout the proof, we frequently denote by $\mathcal{X}_\ast := \sup_{t \in [0,T]} \mathcal{X}_t$ the essential supremum of any RCLL process $\mathcal{X} := (\mathcal{X}_t)_{t \leq T}$. 

Let $\beta > 0$ and $(a_{t})_{t\leq T}$ be a nonnegative $\mathcal{F}_{t}$-adapted process. We define the increasing continuous process $A_t:= \int_0^t \zeta^2_sds$, for all  $t \in [0,T]$, and we introduce the following spaces:
\begin{itemize}
	\item[$\bullet$] $\mathbb{L}^2_\lambda$ is the set of $\mathbb{R}$-valued and $\mathbb{U}$-measurable mapping $V:\mathcal{U}\rightarrow \mathbb{R}$ such that
	$$\|V\|_\lambda^{2}=\int_{\mathcal{U}}|V(e)|^2\lambda(de) < +\infty.$$
	\item[$\bullet$] $\mathcal{L}^{p}_\beta$ is the space of $\mathbb{R}$-valued and $\mathcal{F}_{T}$-measurable random variables $\xi$ such that
	$$\|\xi\|_{\mathcal{L}^{p}_\beta}=\left(\mathbb{E}\left[e^{\frac{p}{2}\beta A_T}|\xi|^{p}\right]\right)^\frac{1}{p} < +\infty.$$
	\item[$\bullet$] $\mathcal{K}^{p}$ is the space of $\mathbb{R}$-valued RCLL  $\mathbb{F}$-predictable increasing processes $K$ such that $K_0=0$
	$$\|K\|_{\mathcal{K}^{p}}=\left(\mathbb{E}\left[|K_T|^{p}\right]\right)^\frac{1}{p} < +\infty.$$
	\item[$\bullet$] $\mathcal{S}^{p}_\beta$ is the space of $\mathbb{R}$-valued and $\mathbb{F}$-adapted RCLL processes $(Y_{t})_{t\leq T}$ such that
	$$\|Y\|_{\mathcal{S}^{p}_\beta}=\left(\mathbb{E}\left[\sup\limits_{0\leq t\leq T}e^{\frac{p}{2}\beta A_t}|Y_{t}|^{p}\right]\right)^\frac{1}{p} < +\infty,$$
	with the convention $\mathcal{S}^p:=\mathcal{S}^p_0$.
	\item[$\bullet$] $\mathcal{S}^{p,A}_\beta$ is the space of $\mathbb{R}$-valued and $\mathbb{F}$-adapted RCLL processes $(Y_{t})_{t\leq T}$ such that
	$$\|Y\|_{\mathcal{S}^{p,A}_\beta}=\left(\mathbb{E}\left[\int_{0}^{T}e^{\frac{p}{2}\beta A_t}|Y_{t}|^{p}dA_t\right]\right)^\frac{1}{p} < +\infty.$$
	\item[$\bullet$] $\mathcal{H}^{p}_\beta$ is the space of $\mathbb{R}^{d}$-valued and $\mathbb{F}$-predictable processes $(Z_{t})_{t\leq T}$ such that
	$$\|Z\|_{\mathcal{H}^{p}_\beta}=\left(\mathbb{E}\left[\left(\int_{0}^{T}e^{\beta A_t}|Z_{t}|^{2}dt\right)^\frac{p}{2}\right]\right)^\frac{1}{p} < +\infty,$$
	with the convention $\mathcal{H}^p:=\mathcal{H}^p_0$.
	\item[$\bullet$] $\mathfrak{L}^{p}_{\lambda,\beta}$ is the space of $\mathbb{R}$-valued and $\mathcal{P}\otimes \mathbb{U}$-measurable processes $(U_t)_{t\leq T}$ such that
	$$\|U\|_{\mathfrak{L}^{p}_{\lambda,\beta}}=\left(\mathbb{E}\left[\left(\int_0^Te^{\beta A_t}\|U_t\|_\lambda^2dt\right)^\frac{p}{2}\right]\right)^\frac{1}{p} < +\infty,$$
	with the convention $\mathfrak{L}^{p}_{\lambda}:=\mathfrak{L}^{p}_{\lambda,0}$.
	\item[$\bullet$] $\mathfrak{L}^{p}_{\mu,\beta}$ is the space of $\mathbb{R}$-valued and $\mathcal{P}\otimes \mathbb{U}$-measurable processes $(U_t)_{t\leq T}$ such that
	$$\|U\|_{\mathfrak{L}^{p}_{\mu,\beta}}=\left(\mathbb{E}\left[\left(\int_0^T\int_{\mathcal{U}}e^{\beta A_t}|U_t(e)|^2\mu(dt,de)\right)^\frac{p}{2}\right]\right)^\frac{1}{p} < +\infty,$$
	with the convention $\mathfrak{L}^{p}_{\mu}:=\mathfrak{L}^{p}_{\mu,0}$.
	\item[$\bullet$] $\mathfrak{B}^p_\beta:=\mathcal{S}^{p}_\beta\cap \mathcal{S}^{p,A}_\beta$ is a Banach space endowed with the norm
	$\|Y\|_{\mathfrak{B}^{p}_\beta}^p=\|Y\|_{\mathcal{S}^{p}_\beta}^p+\|Y\|_{\mathcal{S}^{p,A}_\beta}^p.$
	\item[$\bullet$] $\mathcal{E}^p_\beta:=\mathfrak{B}^{p}_\beta \times \mathcal{H}^{p}_\beta \times \mathfrak{L}^{p}_{\lambda,\beta} \times \mathcal{K}^{p}.$
\end{itemize}

Throughout this paper, we denote by $\mathfrak{c}$ and $\mathfrak{C}$ generic constants whose values may vary from line to line. When a constant depends explicitly on a given set of parameters $\gamma$, we will write $\mathfrak{c}_\gamma$ (resp. $\mathfrak{C}_\gamma$) to highlight this dependence.

In this paper, we aim to find a quadruple of processes $(Y,Z,U,K)$ taking values in $\mathbb{R}^d \times \mathbb{R}^{d \times k} \times \mathfrak{L}^2_\lambda \times \mathbb{R}^d$ that satisfies the following RBSDEJ associated with the data $(\xi, f, L)$:
\begin{equation}\label{basic equation}
	\displaystyle\left\{
	\begin{split}
		&\text{(i)}~Y_{t} = \xi + \int_{t}^{T} f(s, Y_{s}, Z_{s}, U_s) \, ds + \left(K_{T} - K_{t}\right) - \int_{t}^{T} Z_{s} \, dB_{s} - \int_t^T \int_{\mathcal{U}} U_s(e) \, \tilde{\mu}(ds,de), \quad t \in [0,T], \\
		&\text{(ii)}~Y_t \geq L_t, \quad \forall t \in [0,T], \\
		&\text{(iii)}~\int_0^T (Y_t - L_t) \, dK^c_t = 0 \; \text{a.s.} \quad \text{and} \quad \Delta K_t^d = (Y_t - L_{t-})^- \, \mathds{1}_{\{Y_{t-} = L_{t-}\}} \; \text{a.s.}
	\end{split}
	\right.
\end{equation}

We seek such a quadruple in the space $\mathfrak{B}^{p}_\beta \times \mathcal{H}^{p}_\beta \times \mathfrak{L}^{p}_{\lambda,\beta} \times \mathcal{S}^{p}$ for some $p \in (1,2)$. Any such quadruple is referred to as an $\mathbb{L}^p$-solution of the RBSDEJ \eqref{basic equation}. More formally, we give the following definition:
\begin{definition}\label{Def}
	Let $p \in (1,2)$. A quadruple of processes $(Y,Z,U,K)$ with values in $\mathbb{R}^d \times \mathbb{R}^{d \times k} \times \mathbb{L}^2_\lambda \times \mathbb{R}^d$ is said to be an $\mathbb{L}^p$-solution of the RBSDEJ \eqref{basic equation} associated with the data $(\xi, f, L)$ if it satisfies equation \eqref{basic equation} and belongs to the space $\mathcal{E}^p_\beta$.
\end{definition}

\paragraph{Assumptions on the data $(\xi, f, L)$:}\emph{}\\
In what follows, we impose the following conditions on the terminal condition $\xi$, the generator function $f$, and the obstacle process $L$:\\ 
Let $p \in (1,2)$
\begin{description}
	\item[$(\mathcal{H}1)$] The terminal condition $\xi \in \mathcal{L}^{p}_\beta$.
	\item[$(\mathcal{H}2)$] The coefficient $f : \Omega\times [0,T]\times\mathbb{R}\times\mathbb{R}^d\times \mathbb{L}^2_\lambda\longrightarrow \mathbb{R}$ satisfies :
	\begin{description}
		\item[(i)] For all $(y,z,u)\in \mathbb{R} \times \mathbb{R}^{d}\times\mathbb{L}^2_\lambda$, the process $(f(t,y,z,u))_{t\leq T}$ is progressively measurable.
		
		\item[{(ii)}] There exists an $\mathbb{F}$-progressively measurable processes $\alpha : \Omega \times [0,T] \rightarrow \mathbb{R}$ such that for all $t \in [0,T]$, $y,y' \in \mathbb{R}$, $z \in \mathbb{R}^{d}$, $u \in \mathbb{L}^2_\lambda$, $d\mathbb{P}\otimes dt$-a.e.,
		$$
		\left(y-y'\right)\left(f(t,y,z,u)-f(t,y',z,u)\right) \leq \alpha_t \left|y-y'\right|^2.
		$$
		
		\item[{(iii)}] There exists two $\mathbb{F}$-progressively measurable processes $\eta, \delta : \Omega \times [0,T] \rightarrow \mathbb{R}_{+}$ and  such that for all $t \in [0,T]$, $y, \in \mathbb{R}$, $z,z' \in \mathbb{R}^{d}$, $u,u' \in \mathbb{L}^2_\lambda$, $d\mathbb{P}\otimes dt$-a.e.,
		$$
		\left| f(t,y,z,u)-f(t,y,z',u')\right|  \leq \eta_t \left|z-z'\right|+\delta_t \left\|u-u'\right\|_{\lambda}.
		$$
		
		\item[{(iv)}] There exist two progressively measurable processes $\varphi : \Omega \times \left[0,T\right] \rightarrow [1,+\infty)$, $\phi : \Omega \times \left[0,T\right] \rightarrow (0,+\infty)$ such that $\left|f(t,y,0,0)\right| \leq \varphi_t+\phi_t |y|$ and 
		$$
		\mathbb{E}\left[ {\int _{0}^{T}}{e^{\beta {A_{s}}}} \left|\varphi_s\right|^pds\right]  <+\infty ,
		$$
		
		\item[(v)] We set $a_s^2=\phi_s+\eta^2_s+\delta^2_s$ with $a_s \geq 0$ for any $s \in [0,T]$ and then we define  $\zeta_s^2=a_s^{q}$ with $q=\frac{p}{p-1}$ and we assume that there exists a constant $\epsilon>0$ such that $a^2_s \geq \epsilon$ for any $s \in [0,T]$. Note that $q$ corresponds to the conjugate exponent of $p$, i.e., $\frac{1}{p} + \frac{1}{q} = 1$, which will be useful when applying Hölder's inequality. Moreover, we have $\zeta_s \geq \epsilon^{\frac{q}{4}}$ for any $s \in [0,T]$.
		
		\item[{(vi)}] For all $(t, z, u) \in [0,T] \times \mathbb{R}^{d} \times \mathbb{L}^2_\lambda$, the mapping $y \mapsto f(t, y, z, u)$ is continuous, $\mathbb{P}$-a.s.
	\end{description}
	\item[$(\mathcal{H}3)$] The obstacle $(L_{t})_{t\leq T}$ is a RCLL progressively measurable real-valued process satisfying
	\begin{description}
		\item[$(i)$] $L_T\leq \xi$.
		\item[$(ii)$] $\mathbb{E}\left[\sup\limits_{0\leq t\leq T}\left|e^{\frac{q}{2}\beta A_t}L_t^+\right|^{p}\right]<+\infty$.
	\end{description}
\end{description}
\begin{remark}\label{rmq essential}
	To simplify computations and obtain optimal constants in the a priori estimates of the solutions, we shall assume throughout the remainder of this paper that condition $(\mathcal{H}2)$ holds with a process $(\alpha_t)_{t \leq T}$ satisfying $\alpha_t + \varepsilon a_t^2 \leq 0$ for every $\varepsilon \geq 0$ and all $t \in [0,T]$. In cases where this assumption does not hold, one may apply a change of variable analogous to the one described in \cite[Remark 5]{elmansouri2025} or \cite[Remark 3]{badrGBSDE}, suitably adapted to account for the reflecting process $K$ in the RBSDEJ \eqref{basic equation}, thereby reducing the setting to the required framework.
\end{remark}

\begin{remark}\label{Obse}
	It is important to emphasize, as noted in \cite{Kruse17112017}, that for $p \in (1,2)$, the compensator of a purely discontinuous martingale does not, in general, dominate its predictable projection (see \cite{Lenglart1980} for a counterexample). In this setting, for any $U \in \mathfrak{L}^p_\lambda$, one cannot control the quantity 
	$$
	\mathbb{E}\left[\left(\int_0^T e^{\beta A_t} \|U_t\|^2_\lambda \, dt\right)^{\frac{p}{2}}\right]
	$$
	by the term
	$$
	\mathbb{E}\left[\left(\int_0^T \int_{\mathcal{U}} e^{\beta A_t} |U_t(e)|^2 \, \mu(dt,de)\right)^{\frac{p}{2}}\right].
	$$
	However, exploiting the concavity of the function $x \mapsto x^{p/2}$ on $\mathbb{R}^+$ and applying Theorem 4.1-(2) in \cite{Lenglart1980}, one obtains that for every $U \in \mathfrak{L}^{p}_{\mu,\beta}$, the following inequality holds:
	\begin{equation*}\label{E}
		\mathbb{E}\left[\left(\int_{0}^{T} e^{\beta A_s} \int_{\mathcal{U}} \big|U_s(e)\big|^2 \, \mu(ds,de)\right)^{\frac{p}{2}}\right] 
		\leq 2 \, \mathbb{E}\left[\left(\int_{0}^{T} e^{\beta A_s} \|U_s\|^2_{\mathbb{L}^2_\lambda} \, ds\right)^{\frac{p}{2}}\right].
	\end{equation*}
	This leads to the inclusion $\mathfrak{L}^{p}_{\lambda,\beta} \subset \mathfrak{L}^{p}_{\mu,\beta}$ and $\mathfrak{L}^{p}_{\lambda,\beta}\cap\mathfrak{L}^{p}_{\mu,\beta}=\mathfrak{L}^{p}_{\lambda,\beta}$. 
	
	The situation is different in the case of continuous martingales such as $\int_0^{\cdot} Z_s \, dB_s$ for $Z \in \mathcal{H}^p_\beta$, where the classical Burkholder-Davis-Gundy (BDG, for short) inequalities remain valid (see \cite{Lenglart1980}, p. 37).
\end{remark}

We will provide a version of It\^{o}'s formula applied to the function $(t,x) \mapsto e^{\frac{p}{2}\beta A_t} |x|^p$ ($p \in (1,2)$), which is not sufficiently smooth. We define $\hat{x} := |x|^{-1}x\mathds{1}_{\{x \neq 0\}}$.\\
This result, which will be used multiple times, is a slight variation of \cite[Lemma 7]{Kruse18052016}, where we incorporate a predictable process of finite variation $(K_t)_{t \leq T}$ instead of an orthogonal martingale, as considered in the BSDE framework of \cite{Kruse18052016}. The case for a filtration generated by a Brownian motion is detailed in \cite[Lemma 2.2]{BRIAND2003109} and in the reflected case is presented in \cite[Corollary 1]{HamadenePopier2012}.
\begin{lemma}
	We consider the $\mathbb{R}$-valued semimartingale $(X_t)_{t\leq T}$ defined by
	$$ X_{t}=X_0+\int_{0}^{t}F_sds+\int_{0}^{t}Z_{s}dB_{s}+\int_0^t\int_{\mathcal{U}}U_s(e)\tilde{\mu}(ds,de)+K_t,$$
	such that:
	\begin{itemize}
		\item  $\mathbb{P}$-a.s. the process $K$ is predictable of bounded variation.
		
		\item $\left(F_t\right)_{t \leq T}$ is an $\mathbb{R}$-valued progressively measurable process and $\left(Z_t\right)_{t \leq T}$, $\left(U_t\right)_{t \leq T}$ are predictable processes with values in  $\mathbb{R}^d$, $\mathbb{L}^2_\lambda$, respectively, such that $\int_0^T\left\{F_t+|Z_t|^2+\|U_t\|_\lambda^2\right\}dt<+\infty$, $\mathbb{P}$-a.s.
	\end{itemize}
	Then, for any $p\geq 1$ there exists a continuous and non-decreasing process $(\ell_t)_{t\leq T}$ such that
	\begin{eqnarray}\label{e0}
		e^{\frac{p}{2}\beta A_t}|X_t|^p&=&|X_0|^p+\frac{p}{2}\beta\int_0^te^{\frac{p}{2}\beta A_s}|X_s|^pdA_s+\frac{1}{2}\int_0^te^{\frac{p}{2}\beta A_s}\mathds{1}_{\{p=1\}}d\ell_s+p\int_0^te^{\frac{p}{2}\beta A_s}|X_s|^{p-1}\hat{X}_sF_s ds\nonumber\\
		&&+p\int_0^te^{\frac{p}{2}\beta A_s}|X_s|^{p-1}\hat{X}_sZ_sdB_s+p\int_0^t\int_{\mathcal{U}}e^{\frac{p}{2}\beta A_s}|X_{s-}|^{p-1}\hat{X}_{s-}U_s(e)\tilde{\mu}(ds,de)\nonumber\\
		&&+p\int_0^te^{\frac{p}{2}\beta A_s}|X_{s-}|^{p-1}\hat{X}_{s-}dK_s+c(p)\int_0^te^{\frac{p}{2}\beta A_s}|X_s|^{p-2}|Z_s|^2
		\mathds{1}_{\{X_s\neq0\}}ds\nonumber\\
		&&+\int_0^t\int_{\mathcal{U}}e^{\frac{p}{2}\beta A_s}\left[|X_{s-}+U_s(e)|^{p}-|X_{s-}|^p-p|X_{s-}|^{p-1}\hat{X}_{s-}U_s(e)\right]\mu(ds,de),\nonumber
	\end{eqnarray}
	where $c(p):=\frac{p(p-1)}{2}$ and  $(\ell_t)_{t\leq T}$ is a continuous, non-decreasing process that increases only on the boundary of the random set $\{t\leq T,\;\; X_{t-}=X_t=0\}$.
\end{lemma}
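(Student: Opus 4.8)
The plan is to remove the non-smoothness of $x \mapsto |x|^p$ at the origin through a $C^2$-regularization, following the strategy of the cited results \cite[Lemma 7]{Kruse18052016} and \cite[Lemma 2.2]{BRIAND2003109}, the genuinely new feature here being the predictable finite-variation process $K$ in the dynamics of $X$, as in \cite[Corollary 1]{HamadenePopier2012}. For $n \geq 1$ I would introduce the smoothed map $u_n(x) := (n^{-1} + |x|^2)^{p/2}$, which is of class $C^2$ with $u_n'(x) = p\,x\,(n^{-1}+|x|^2)^{\frac{p}{2}-1}$ and $u_n''(x) = p\,(n^{-1}+|x|^2)^{\frac{p}{2}-1} + p(p-2)\,|x|^2(n^{-1}+|x|^2)^{\frac{p}{2}-2}$. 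First I would apply the classical It\^o formula for semimartingales with jumps to the $C^{1,2}$ function $(t,x) \mapsto e^{\frac{p}{2}\beta A_t}u_n(x)$ evaluated along $X$. Since $A$ is continuous and increasing, the time derivative contributes the absolutely continuous drift $\frac{p}{2}\beta\int_0^t e^{\frac{p}{2}\beta A_s}u_n(X_s)\,dA_s$, and the pathwise integrability $\int_0^T\{|F_s|+|Z_s|^2+\|U_s\|_\lambda^2\}\,ds<+\infty$ ensures, after localization by a suitable sequence of stopping times, that every resulting integral is well defined.

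The It\^o expansion then splits into four groups. The first-order terms produce $p\int_0^t e^{\frac{p}{2}\beta A_s}(n^{-1}+|X_s|^2)^{\frac{p}{2}-1}X_s F_s\,ds$ and the finite-variation integral $p\int_0^t e^{\frac{p}{2}\beta A_s}(n^{-1}+|X_{s-}|^2)^{\frac{p}{2}-1}X_{s-}\,dK_s$; the continuous martingale part yields $p\int_0^t e^{\frac{p}{2}\beta A_s}(n^{-1}+|X_s|^2)^{\frac{p}{2}-1}X_s Z_s\,dB_s$; the Brownian second-order term gives $\frac{1}{2}\int_0^t e^{\frac{p}{2}\beta A_s}u_n''(X_s)|Z_s|^2\,ds$; and the jump part furnishes the compensated integral against $\tilde\mu$ together with the Taylor-remainder integral $\int_0^t\int_{\mathcal{U}} e^{\frac{p}{2}\beta A_s}[u_n(X_{s-}+U_s(e)) - u_n(X_{s-}) - u_n'(X_{s-})U_s(e)]\,\mu(ds,de)$, which is nonnegative by convexity of $u_n$.

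Next I would pass to the limit $n \to \infty$. Pointwise $u_n(x) \to |x|^p$ and $u_n'(x) \to p|x|^{p-1}\hat{x}$ with the uniform bound $|u_n'(x)| \leq p|x|^{p-1}$, while for $x \neq 0$ one has $u_n''(x) \to p(p-1)|x|^{p-2}$, so the Brownian second-order term converges to $c(p)\int_0^t e^{\frac{p}{2}\beta A_s}|X_s|^{p-2}|Z_s|^2\,\mathds{1}_{\{X_s\neq0\}}\,ds$ with $c(p)=\frac{p(p-1)}{2}$. The nonnegativity of the Taylor remainder allows monotone convergence for the $\mu$-integral, and dominated convergence, combined with the $p$-integrability and the BDG inequalities recalled in Remark \ref{Obse}, handles the drift, the $dK_s$-integral, and the two martingale integrals, the latter again via a stopping-time localization so that the limiting stochastic integrals retain their (local) martingale character and split correctly along $\mu = \tilde\mu + \nu$.

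The main obstacle I anticipate is the behaviour near the zero set of $X$: the singular part of $\frac{1}{2}u_n''(X_s)|Z_s|^2$ does not converge off a negligible set, and its limiting mass must be identified with the continuous non-decreasing process $\ell$ that charges only the boundary of $\{t\leq T:\,X_{t-}=X_t=0\}$. This is a Tanaka/local-time argument, and the factor $\mathds{1}_{\{p=1\}}$ reflects that for $p>1$ the singular contribution is absorbed into the explicit $c(p)$-term and $\ell$ disappears, whereas for $p=1$ it is precisely the surviving term. A secondary difficulty is the careful bookkeeping of the finite-variation contribution of $K$, including the correct treatment of its predictable jumps via the decomposition $K=K^c+K^d$, so that in the limit it reduces to the stated first-order integral while all compensation terms remain consistent with the $\mu/\tilde\mu$ splitting.
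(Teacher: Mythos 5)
Your proposal is correct and takes essentially the same route as the paper: your smoothing $u_n(x)=(n^{-1}+|x|^2)^{p/2}$ is exactly the paper's $\nu_\varepsilon(x)^p$ with $\varepsilon=n^{-1/2}$, applied through the classical It\^{o} formula, the pathwise decomposition $K=K^c+K^d$, and the same limiting arguments (monotone/dominated convergence plus the Tanaka-type local-time identification of $\ell$ for $p=1$) borrowed from \cite[Lemma 2.2]{BRIAND2003109} and \cite[Lemma 7]{Kruse18052016}. The only ingredient the paper makes explicit that you leave implicit in your ``bookkeeping'' remark is that quasi-left continuity of $\mathbb{F}$ forces the covariation between $\int_0^\cdot\int_{\mathcal{U}}U_s(e)\,\tilde{\mu}(ds,de)$ and the predictable process $K$ to vanish, which is what guarantees that no cross jump terms appear in the expansion.
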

\begin{proof}
	Here, we only sketch the proof and highlight the main differences.\\
	Using the pathwise decomposition of the process $(K_t)_{t \leq T}$ as $K_t = K^c_t + \sum_{0 < s \leq t} \Delta K_s$ and considering the function $\nu_\varepsilon : \mathbb{R} \rightarrow \mathbb{R}$ defined by $\nu_\varepsilon(x) = (|x|^2 + \varepsilon^2)^\frac{1}{2}$ for $\varepsilon > 0$ with its first derivative $\left(\nu_\varepsilon(x)^p\right)^\prime = p x \nu_\varepsilon(x)^{p-2}$, we obtain the following convergences:
	\begin{itemize}
		\item As shown in \cite[Lemma 2.2]{BRIAND2003109}, for the terms involving the first derivatives of $\nu_\varepsilon$ with respect to $K^c$, we have
		$$
		\int_{0}^{t} e^{\frac{p}{2}\beta A_s}\nu_\varepsilon(X_{s})^{p-2} X_{s} \, dK^c_s \xrightarrow[\varepsilon \rightarrow 0^+]{\mathbb{P}} \int_{0}^{t}e^{\frac{p}{2}\beta A_s} \left|X_s\right|^{p-1} \hat{X}_s \, dK^c_s.
		$$
		\item Similarly, as in \cite[Lemma 7]{Kruse18052016}, for the terms involving the first derivatives of $\nu_\varepsilon$ with respect to $\Delta K$, we have
		$$
		\sum_{0 < s \leq t} e^{\frac{p}{2}\beta A_s}\nu_\varepsilon(X_{s-})^{p-2} X_{s-} \Delta K_s \xrightarrow[\varepsilon \rightarrow 0^+]{\mathbb{P}} \sum_{0 < s \leq t} e^{\frac{p}{2}\beta A_s}\left|X_{s-}\right|^{p-1} \hat{X}_{s-} \Delta K_s.
		$$
	\end{itemize}
	Additionally, since the filtration $\mathbb{F}$ is quasi-left continuous, we deduce that the covariation process between $\int_0^\cdot \int_{\mathcal{U}} U_s(e) \tilde{\mu}(ds, de)$ and $K$ is zero. Using this, the remainder of the proof is identical to the proof of Lemma 7 in \cite{Kruse18052016}; thus, we omit the details here.
\end{proof}

A consequence of the previous lemma is the following Corollary in the one-dimensional case:
\begin{corollary}\label{cor}
	If $(Y,Z,U,K)$ is a $\mathbb{L}^p$-solution of RBSDEJs associated with parameters $(\xi,f,L)$, for $p\in(1,2)$, then
	\begin{eqnarray*}
		&&e^{\frac{p}{2}\beta A_t}|Y_t|^p+\frac{p}{2}\beta\int_t^Te^{\frac{p}{2}\beta A_s}|Y_s|^pdA_s+c(p)\int_t^Te^{\frac{p}{2}\beta A_s}|Y_s|^{p-2}|Z_s|^2\mathds{1}_{\{Y_s\neq0\}}ds\\
		&\leq&e^{\frac{p}{2}\beta A_T}|\xi|^p+p\int_t^Te^{\frac{p}{2}\beta A_s}|Y_s|^{p-1}\hat{Y}_sf(s,Y_s,Z_s,U_s)ds+p\int_t^Te^{\frac{p}{2}\beta A_s}|Y_{s-}|^{p-1}\hat{Y}_{s-}dK_s\\
		&&-p\int_t^Te^{\frac{p}{2}\beta A_s}|Y_s|^{p-1}\hat{Y}_sZ_sdB_s-p\int_t^T\int_{\mathcal{U}}e^{\frac{p}{2}\beta A_s}|Y_{s-}|^{p-1}\hat{Y}_{s-}U_s(e)\tilde{\mu}(ds,de)\\
		&&-\int_t^T\int_{\mathcal{U}}e^{\frac{p}{2}\beta A_s}\left[|Y_{s-}+U_s(e)|^{p}-|Y_{s-}|^p-p|Y_{s-}|^{p-1}\hat{Y}_{s-}U_s(e)\right]\mu(ds,de).
	\end{eqnarray*}
\end{corollary}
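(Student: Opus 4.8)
The plan is to apply the Itô formula of the preceding Lemma to the semimartingale $Y$, reading its decomposition directly off of \eqref{basic equation}. Writing \eqref{basic equation}(i) in forward-differential form gives $dY_t = -f(t,Y_t,Z_t,U_t)\,dt - dK_t + Z_t\,dB_t + \int_{\mathcal U}U_t(e)\,\tilde\mu(dt,de)$, so in the notation of the Lemma I identify $X = Y$, the progressively measurable drift $F_s = -f(s,Y_s,Z_s,U_s)$, the same $Z$ and $U$, and the predictable finite-variation part equal to $-K$ (recall that $K$ is increasing). Since $p \in (1,2)$, the indicator $\mathds 1_{\{p=1\}}$ annihilates the continuous non-decreasing process $\ell$, so that term disappears altogether. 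The hypotheses of the Lemma are satisfied because $(Y,Z,U,K) \in \mathcal E^p_\beta$ guarantees the pathwise integrability $\int_0^T\{|f(s,Y_s,Z_s,U_s)| + |Z_s|^2 + \|U_s\|_\lambda^2\}\,ds < +\infty$ almost surely.

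I would then pass to the backward form on $[t,T]$ by subtracting the identity at time $t$ from the identity at time $T$, using that setting $t=T$ in \eqref{basic equation}(i) forces $Y_T = \xi$. After moving the two manifestly nonnegative terms $\tfrac p2\beta\int_t^T e^{\frac p2\beta A_s}|Y_s|^p\,dA_s$ and $c(p)\int_t^T e^{\frac p2\beta A_s}|Y_s|^{p-2}|Z_s|^2\mathds 1_{\{Y_s\neq 0\}}\,ds$ to the left-hand side, every term of the statement appears: the $dB$- and $\tilde\mu$-martingale integrals and the $\mu$-integral second-order jump correction pass through unchanged, and since the process $A$ is continuous it contributes no jumps.

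The one step that is not mere rearrangement, and the crux of the argument, is the treatment of the jumps of the reflection process $K$. Because the filtration is quasi-left-continuous, the predictable jump times of $K$ are disjoint (up to a null set) from the totally inaccessible jump times of $\tilde\mu$, so at a jump time of $K$ one has $\Delta Y_s = -\Delta K_s$ and the genuine jump contribution produced by Itô's formula is $\sum_{t<s\le T} e^{\frac p2\beta A_s}\big(|Y_{s-}-\Delta K_s|^p - |Y_{s-}|^p\big)$ rather than the first-order expression appearing in the statement. Here I would invoke the convexity of $x \mapsto |x|^p$ for $p>1$, in the form $|a-b|^p - |a|^p \ge -p|a|^{p-1}\hat a\,b$, applied with $a = Y_{s-}$ and $b = \Delta K_s \ge 0$; summing over the jumps and combining with the continuous contribution $p\int_t^T e^{\frac p2\beta A_s}|Y_{s-}|^{p-1}\hat Y_{s-}\,dK^c_s$ bounds the exact jump contribution of $K$ (as it enters the right-hand side) above by the first-order term $p\int_t^T e^{\frac p2\beta A_s}|Y_{s-}|^{p-1}\hat Y_{s-}\,dK_s$. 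This single convexity estimate is precisely what converts the Itô equality into the asserted inequality; notably, the Skorokhod/minimality conditions \eqref{basic equation}(iii) are not needed at this stage. I expect the sign bookkeeping in the passage to the backward form, together with checking that the convexity bound points in the correct direction, to be the only delicate points.
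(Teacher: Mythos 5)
Your proposal is correct and takes essentially the route the paper intends: the Corollary is stated as an immediate consequence of the generalized It\^o formula, obtained exactly as you do by applying it to $Y$ with $F_s=-f(s,Y_s,Z_s,U_s)$ and predictable finite-variation part $-K$, noting that the local-time term vanishes since $p\in(1,2)$, rewriting the identity on $[t,T]$ with $Y_T=\xi$, and moving the nonnegative $dA_s$- and $|Z|^2$-terms to the left. Your explicit convexity step for the predictable jumps of $K$ --- bounding $-\left(|Y_{s-}-\Delta K_s|^p-|Y_{s-}|^p\right)$ above by $p|Y_{s-}|^{p-1}\hat{Y}_{s-}\Delta K_s$, legitimately separated from the totally inaccessible jumps of $\mu$ --- is precisely the unstated reason the Corollary is an inequality rather than an equality (the Lemma's displayed identity records only the first-order $dK_s$ term together with the second-order corrections for the Poisson jumps), so making it explicit is a point of added care rather than a deviation from the paper.
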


We also state the following Lemma, which can be found in \cite[Lemma 9]{Kruse18052016}.
\begin{lemma}\label{lem1}
	Let $p\in(1,2)$. Then we have
	\begin{eqnarray*}
		&&\hspace{-3cm}-\int_0^t\int_{\mathcal{U}}e^{\frac{p}{2}\beta A_s}\left[|Y_{s-}+U_s(e)|^{p}-|Y_{s-}|^p-p|Y_{s-}|^{p-1}\hat{Y}_{s-}U_s(e)\right]\mu(ds,de)\\
		&\leq&-c(p)\int_0^t\int_{\mathcal{U}}e^{\frac{p}{2}\beta A_s}|U_s(e)|^2\left(|Y_{s-}|^{2}\vee|Y_{s}|^2\right)^{\frac{p-2}{2}}\mathds{1}_{\{|Y_{s-}|\vee|Y_{s}|\neq 0\}}\mu(ds,de).
	\end{eqnarray*}
\end{lemma}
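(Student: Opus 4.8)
The plan is to reduce the stochastic inequality to a single deterministic pointwise estimate and then integrate. Concretely, I would first establish that for every $p\in(1,2)$ and all $x,y\in\mathbb{R}$,
$$
|x+y|^p-|x|^p-p|x|^{p-1}\hat{x}\,y \;\geq\; c(p)\,|y|^2\,\bigl(|x|^2\vee|x+y|^2\bigr)^{\frac{p-2}{2}}\,\mathds{1}_{\{|x|\vee|x+y|\neq 0\}},
$$
with $c(p)=\frac{p(p-1)}{2}$. Granting this, I would apply it $\mu(ds,de)$-a.e. with $x=Y_{s-}$ and $y=U_s(e)$. Since $\mathbb{F}$ is quasi-left continuous and $K$ is predictable, $K$ carries no jump at the (totally inaccessible) jump times of $\mu$, so under the integral $\int_0^t\!\int_{\mathcal{U}}\!\cdots\,\mu(ds,de)$ one has $Y_{s-}+U_s(e)=Y_s$, which turns $\bigl(|x|^2\vee|x+y|^2\bigr)$ into $\bigl(|Y_{s-}|^2\vee|Y_s|^2\bigr)$. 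Multiplying by the nonnegative weight $e^{\frac p2\beta A_s}$, integrating against $\mu$, and changing the overall sign then yields exactly the asserted upper bound.

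For the pointwise inequality I would use the second-order Taylor formula with integral remainder applied to $g(t):=|x+ty|^p$ on $[0,1]$, which is $C^2$ away from the origin with $g'(t)=p|x+ty|^{p-1}\widehat{(x+ty)}\,y$ and $g''(t)=p(p-1)|x+ty|^{p-2}|y|^2$ on $\{x+ty\neq0\}$. This gives
$$
|x+y|^p-|x|^p-p|x|^{p-1}\hat{x}\,y \;=\; g(1)-g(0)-g'(0) \;=\; p(p-1)\,|y|^2\int_0^1(1-t)\,|x+ty|^{p-2}\,dt .
$$
The key estimate is the lower bound $|x+ty|^{p-2}\geq\bigl(|x|\vee|x+y|\bigr)^{p-2}$ for all $t\in[0,1]$: the map $t\mapsto|x+ty|$ is convex, so its maximum over $[0,1]$ is attained at an endpoint, whence $\sup_{t\in[0,1]}|x+ty|=|x|\vee|x+y|$, and since $p-2<0$ the map $r\mapsto r^{p-2}$ is decreasing. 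Combining this with $\int_0^1(1-t)\,dt=\tfrac12$ produces the constant $c(p)=\tfrac{p(p-1)}{2}$ and the desired inequality, with $(|x|\vee|x+y|)^{p-2}=(|x|^2\vee|x+y|^2)^{\frac{p-2}{2}}$.

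The main technical point I expect to be delicate is the non-smoothness of $x\mapsto|x|^p$ at the origin, namely the case where the segment $\{x+ty:\,t\in[0,1]\}$ crosses $0$, since there $g''$ is undefined at the crossing time. I would resolve this exactly as in the Itô formula above, by first carrying out the expansion for the $C^2$ regularization $\nu_\varepsilon(x)=(|x|^2+\varepsilon^2)^{1/2}$ applied to $\nu_\varepsilon(\cdot)^p$ and then letting $\varepsilon\to0^+$; the limiting integrand $|x+ty|^{p-2}$ is integrable in $t$ near its (at most one) zero because $p-2>-1$, so dominated convergence applies and the lower bound is preserved in the limit. The degenerate case $|x|\vee|x+y|=0$, i.e. $x=y=0$, is excluded by the indicator, while the boundary case $x=0$, $y\neq0$ is consistent since both sides then reduce to a multiple of $|y|^p$ with $c(p)\leq1$ for $p\in(1,2)$.
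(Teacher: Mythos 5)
Your proposal is correct and follows essentially the same route as the source: the paper gives no in-text proof of this lemma but cites \cite[Lemma 9]{Kruse18052016}, whose argument is exactly your second-order Taylor formula with integral remainder for $t\mapsto|x+ty|^p$ combined with the convexity bound $\sup_{t\in[0,1]}|x+ty|=|x|\vee|x+y|$ and the decrease of $r\mapsto r^{p-2}$, with the $\varepsilon$-regularization handling the crossing of the origin (which is in fact optional here, since $p-2>-1$ makes $g''$ integrable and $g'$ absolutely continuous). Your identification $Y_{s-}+U_s(e)=Y_s$ under the measure $\mu$, justified by quasi-left continuity of $\mathbb{F}$ and predictability of $K$, is likewise precisely the content of Remark \ref{good rmq 1} in the paper.
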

\begin{remark}\label{good rmq 1}
	Let $(Y,Z,V,K)$ be an $L^p$-solution of the RBSDEJ \eqref{basic equation}.\\
	Compared to \cite[Lemma 9]{Kruse18052016}, the state process $(Y_t)_{t \leq T}$ of the RBSDEJ \eqref{basic equation} has two type of jumps: Predictable ones that stems from the negative jumps of the lower barrier $L$, and  totally, inaccessible jumps, which stems from the Poisson random jump measure. Thus, for each  $t \in [0,T]$, we have $\int_{\mathcal{U}}U_t(e)\mu\left(\{t\},de\right)\Delta K_t=0$ and then, under the jump measure $\mu$, we can write 
	\begin{equation*}\label{Simple but import formula}
		\begin{split}
			&\int_0^t\int_{\mathcal{U}}e^{\frac{p}{2}\beta A_s}|U_s(e)|^2\left(|Y_{s-}|^{2}\vee|Y_{s-}+U_s(e)|^2\right)^{\frac{p-2}{2}}\mathds{1}_{\{|Y_{s-}|\vee|Y_{s-}+U_s(e)|\neq 0\}}\mu(ds,de)\\
			&= \int_0^t\int_{\mathcal{U}}e^{\frac{p}{2}\beta A_s}|U_s(e)|^2\left(|Y_{s-}|^{2}\vee|Y_{s}|^2\right)^{\frac{p-2}{2}}\mathds{1}_{\{|Y_{s-}|\vee|Y_{s}|\neq 0\}}\mu(ds,de).
		\end{split}
	\end{equation*}
	Consequently, in order to remain within the standard framework for stochastic integrals with respect to martingales driven by jump measures, where the integrands are predictable and integrable with respect to such martingales (see, for instance, Chapter II, Section 1d in \cite{jacod2013limit}), we retain the following formulation of the jump term when computing expectations. Although the two expressions below are mathematically equivalent, we choose to work with the form
	$$
	\int_0^\cdot \int_{\mathcal{U}} e^{\frac{p}{2} \beta A_s} |U_s(e)|^2 \left(|Y_{s-}|^2 \vee |Y_{s-} + U_s(e)|^2\right)^{\frac{p-2}{2}} \mathds{1}_{\{|Y_{s-}| \vee |Y_{s-} + U_s(e)| \neq 0\}} \, \mu(ds, de)
	$$
	instead of
	$$
	\int_0^\cdot \int_{\mathcal{U}} e^{\frac{p}{2} \beta A_s} |U_s(e)|^2 \left(|Y_{s-}|^2 \vee |Y_s|^2\right)^{\frac{p-2}{2}} \mathds{1}_{\{|Y_{s-}| \vee |Y_s| \neq 0\}} \, \mu(ds, de),
	$$
	in order to preserve the predictable structure of the integrands and ensure consistency with the general theory of stochastic integration for jump processes, in view of the issues observed in \cite{Kruse18052016} and corrected in \cite{Kruse17112017} for $p \in (1,2)$. Accordingly, we consider the associated predictable projection:
	$$
	\int_0^\cdot \int_{\mathcal{U}} e^{\frac{p}{2} \beta A_s} |U_s(e)|^2 \left(|Y_{s-}|^2 \vee |Y_{s-} + U_s(e)|^2\right)^{\frac{p-2}{2}} \mathds{1}_{\{|Y_{s-}| \vee |Y_{s-} + U_s(e)| \neq 0\}} \, \lambda(de) \, ds.
	$$
\end{remark}

Finally, we conclude this section with a basic inequality that, although simple, plays an important role in the next section, particularly in calculations involving exponents in the $p$-norm. For the reader's convenience, we also provide a short proof.

We state the following inequality:
\begin{equation}\label{basic inequality}
	\left(\sum_{i=1}^n |x_i| \right)^p \leq n^{p-1} \sum_{i=1}^n |x_i|^p, \qquad \forall (n,x,p) \in \mathbb{N}^\ast \times \mathbb{R}^n \times [1,+\infty).
\end{equation}
To see this, it suffices to consider the $\ell^p$ norm on $\mathbb{R}^n$, defined by
$$
\|x\|_1 = \sum_{i=1}^n |x_i|, \qquad \|x\|_p = \left( \sum_{i=1}^n |x_i|^p \right)^{\frac{1}{p}}, \qquad \forall x = (x_1, x_2, \dots, x_n) \in \mathbb{R}^n.
$$

It is well known that for all $p \geq 1$, the following estimate holds by Hölder's inequality:
$$
\|x\|_1 \leq n^{1 - \frac{1}{p}} \|x\|_p.
$$
Raising both sides to the power $p$, we obtain
$$
\left( \sum_{i=1}^n |x_i| \right)^p \leq n^{p - 1} \sum_{i=1}^n |x_i|^p,
$$
which yields to \eqref{basic inequality}.

\section{Existence and uniqueness of $\mathbb{L}^p$-solution to RBSDEJs via penalization method}\label{s4}
We begin by stating a preliminary result that will be used in the analysis developed in this section.
\begin{lemma}\label{rem1}
	Let $(Y,Z,U)\in \mathfrak{B}^{p}_\beta\times \mathcal{H}^{p}_\beta\times \mathfrak{L}^{p}_{\lambda,\beta}$, then for each $p\in(1,2)$
	$$\left(\int_0^te^{\frac{p}{2}\beta A_s}|Y_{s-}|^{p-1}\hat{Y}_{s-}\left(Z_sdB_s+\int_{\mathcal{U}}U_s(e)\tilde{\mu}(ds,de)\right)\right)_{t\leq T}
	\quad\mbox{is an uniformly integrable martingale.}$$
\end{lemma}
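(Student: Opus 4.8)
The plan is to recognize the process in question, which I will call $(M_t)_{t\le T}$, as a local martingale and then to upgrade it to a uniformly integrable martingale by proving that its running maximum is integrable, i.e. $\mathbb{E}\big[\sup_{t\le T}|M_t|\big]<+\infty$. Since $Y$ is RCLL-adapted, the integrand $s\mapsto e^{\frac p2\beta A_s}|Y_{s-}|^{p-1}\hat Y_{s-}$ is predictable, so $M$ is genuinely the stochastic integral of a predictable process against the martingales $B$ and $\tilde\mu$, hence a local martingale. The decisive observation is that a local martingale whose running maximum lies in $L^1$ is dominated by an integrable random variable, therefore of class (D), and a local martingale of class (D) is a true uniformly integrable martingale. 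To control the running maximum I would invoke the Burkholder--Davis--Gundy inequality at exponent $1$ (valid for continuous and jump parts alike, cf. Remark \ref{Obse}), which reduces the whole problem to showing $\mathbb{E}\big[[M]_T^{1/2}\big]<+\infty$.

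The second step computes and estimates the quadratic variation. Because $Y$ is one-dimensional we have $|\hat Y_{s-}|\le 1$, so
\[[M]_T\le \int_0^T e^{p\beta A_s}|Y_{s-}|^{2(p-1)}|Z_s|^2\,ds+\int_0^T\!\!\int_{\mathcal U} e^{p\beta A_s}|Y_{s-}|^{2(p-1)}|U_s(e)|^2\,\mu(ds,de),\]
and by $\sqrt{a+b}\le\sqrt a+\sqrt b$ it suffices to bound the expected square root of each summand. The key algebraic manipulation is the splitting $e^{p\beta A_s}=e^{(p-1)\beta A_s}e^{\beta A_s}$, which gives $e^{(p-1)\beta A_s}|Y_{s-}|^{2(p-1)}=\big(e^{\frac12\beta A_s}|Y_{s-}|\big)^{2(p-1)}\le\big(\sup_{t\le T}e^{\frac12\beta A_t}|Y_t|\big)^{2(p-1)}$. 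Factoring this supremum out of each integral yields, for the continuous part,
\[\Big(\int_0^T e^{p\beta A_s}|Y_{s-}|^{2(p-1)}|Z_s|^2ds\Big)^{1/2}\le \Big(\sup_{t\le T}e^{\frac12\beta A_t}|Y_t|\Big)^{p-1}\Big(\int_0^T e^{\beta A_s}|Z_s|^2ds\Big)^{1/2},\]
and an identical bound for the jump part with the $\mu$-integral of $|U_s(e)|^2$ replacing the $Z$-integral. I would then absorb each product via Young's inequality with the conjugate pair $\big(\tfrac{p}{p-1},p\big)$: the factor $\big(\sup_t e^{\frac12\beta A_t}|Y_t|\big)^{p-1}$ raised to the power $\tfrac{p}{p-1}$ gives $\sup_t e^{\frac p2\beta A_t}|Y_t|^p$, with expectation $\|Y\|^p_{\mathcal S^p_\beta}$, while the $Z$-factor raised to the power $p$ gives $\big(\int_0^T e^{\beta A_s}|Z_s|^2ds\big)^{p/2}$, with expectation $\|Z\|^p_{\mathcal H^p_\beta}$; both are finite since $(Y,Z)\in\mathfrak B^p_\beta\times\mathcal H^p_\beta$.

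The main obstacle, and the only point where the restriction $p\in(1,2)$ genuinely bites, is the jump term: after Young's inequality one is left with $\mathbb{E}\big[\big(\int_0^T\!\int_{\mathcal U}e^{\beta A_s}|U_s(e)|^2\mu(ds,de)\big)^{p/2}\big]$, an integral against the \emph{raw} jump measure $\mu$ rather than its compensator $\lambda(de)\,ds$. As stressed in Remark \ref{Obse}, for $p<2$ this cannot be controlled pathwise by the $\lambda$-integral, and one must instead invoke the Lenglart-type domination recalled there, giving $\mathbb{E}\big[\big(\int_0^T\!\int_{\mathcal U}e^{\beta A_s}|U_s(e)|^2\mu(ds,de)\big)^{p/2}\big]\le 2\,\|U\|^p_{\mathfrak L^p_{\lambda,\beta}}$, which is finite because $U\in\mathfrak L^p_{\lambda,\beta}$. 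Assembling the three estimates yields $\mathbb{E}\big[[M]_T^{1/2}\big]<+\infty$, hence $\mathbb{E}\big[\sup_{t\le T}|M_t|\big]<+\infty$ by BDG, so $M$ is of class (D) and therefore a uniformly integrable martingale, as claimed.
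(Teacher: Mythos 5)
Your proof is correct and follows essentially the same route as the paper: Young's inequality with conjugate exponents $\left(\tfrac{p}{p-1},p\right)$ to separate $\sup_{t\leq T} e^{\frac{p}{2}\beta A_t}|Y_t|^p$ from the $Z$- and $U$-brackets, then the BDG inequality at exponent $1$ combined with the fact that a local martingale with integrable running supremum is a uniformly integrable martingale (Theorem 51 in \cite[p.~38]{Protter2005}). The only cosmetic difference is that you estimate the raw bracket $\int_0^T\int_{\mathcal{U}} e^{\beta A_s}|U_s(e)|^2\,\mu(ds,de)$ and invoke the Lenglart domination of Remark~\ref{Obse} explicitly, whereas the paper works directly with the compensated form $\int_0^T e^{\beta A_s}\|U_s\|^2_\lambda\,ds$ and leaves that passage implicit in its final step.
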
	

\begin{proof}
	By using Young's inequality (i.e. $ab\leq\frac{p-1}{p}a^\frac{p}{p-1}+\frac{1}{p}b^p$) we have
	\begin{equation*}
		\begin{split}
			\mathbb{E}\left[ \left(\int_0^Te^{p\beta A_s}|Y_{s-}|^{2(p-1)}\|U_s\|_\lambda^2ds\right)^{\frac{1}{2}}\right] 
			&\leq\mathbb{E}\left[\sup_{0\leq t\leq T}e^{\frac{p-1}{2}\beta A_t}|Y_t|^{p-1}\left(\int_0^Te^{\beta A_s}\|U_s\|_\lambda^2ds\right)^{\frac{1}{2}}\right]\\
			&\leq\frac{p-1}{p}\mathbb{E}\left[\sup_{0\leq t\leq T}e^{\frac{p}{2}\beta A_t}|Y_t|^{p}\right]+\frac{1}{p}\mathbb{E}\left[ \left(\int_0^Te^{\beta A_s}\|U_s\|_\lambda^2ds\right)^{\frac{p}{2}}\right] \\
			&<+\infty,
		\end{split}
	\end{equation*}
	which is finite since $(Y,V)\in \mathfrak{B}^{p}_\beta\times \mathfrak{L}^{p}_{\lambda,\beta}$. In the same way
	\begin{equation*}
		\begin{split}
			\mathbb{E}\left[ \left(\int_0^Te^{p\beta A_s}|Y_{s-}|^{2(p-1)}|Z_s|^2ds\right)^{\frac{1}{2}}\right] 
			&\leq\frac{p-1}{p}\mathbb{E}\left[\sup_{0\leq t\leq T}e^{\frac{p}{2}\beta A_t}|Y_t|^{p}\right]+\frac{1}{p}\mathbb{E}\left[ \left(\int_0^Te^{\beta A_s}|Z_s|^2ds\right)^{\frac{p}{2}}\right]\\
			&<+\infty.
		\end{split}
	\end{equation*}
	The remainder of the proof is based on the application of Theorem 51 in \cite[p.~38]{Protter2005} and the Burkholder-Davis-Gundy's inequality (Theorem 48 in \cite[p. 193]{Protter2005}).
\end{proof}

Let us next recall, as noted in Remark \ref{Obse}, that one of the principal difficulties in deriving $\mathbb{L}^p$-estimates for $p \in (1,2)$ lies in the inapplicability of the classical Burkholder-Davis-Gundy inequality to the $\frac{p}{2}$-th moment of stochastic integrals with respect to $\tilde{\mu}$, since $\frac{p}{2} < 1$. For a detailed and related discussion, we refer the reader to Theorem VII.92, p.~304 in \cite{DellacherieMeyer1980}, as well as \cite{Lenglart1980}, \cite{MarinelliRoeckner2014}, and Theorem 48, p.~193 in \cite{Protter2005}. Furthermore, as stated in \cite{Kruse18052016} and corrected in \cite{Kruse17112017}, in the regime $p \in (1,2)$, when the generator $f$ depends explicitly on the variable $u$, the classical a priori estimates for $\mathbb{L}^p$-solutions $(Y, Z, U, K)$, as introduced in Definition \ref{Def}, cannot be derived directly using the stochastic Lipschitz condition $(\mathcal{H}2)$-(iii) on $f$ with respect to $u$ (see also Remark 14 in \cite{elmansouri2025}).

To address these challenges and prove the existence and uniqueness of an $\mathbb{L}^p$-solution to the RBSDEJ \eqref{basic equation} for $p \in (1,2)$, our approach is twofold. First, we analyze the case where the generator $f$ does not depend on the variable $u$, and derive the necessary a priori estimates as well as the existence and uniqueness of a solution. In the second step, we extend the result to the general case, where $f$ may depend on $u$, by employing a fixed point argument in an appropriately chosen Banach space.

\subsection{The case where the driver $f$ does not depend on the variables $(z,u)$}
In this initial step, we assume that the coefficient $f$ is independent of the variable $u$. For the sake of simplicity, we further suppose (in the existence result) that $f$ does not depend on $z$ either. However, it is important to emphasize that the method remains valid, even when $f$ depends on $z$ with only minor adjustments required. In particular, when $f$ depends on $z$, the main steps of the method still apply. The key point is that the stochastic integral with respect to the Brownian motion, associated with the $z$-variable, is a continuous (local) martingale. As such, standard estimates involving its predictable compensator can still be obtained via the Burkholder-Davis-Gundy  inequality (see Remark~\ref{Obse}). Furthermore, the terms involving the $z$-variable in the driver can be handled using classical arguments under assumption $(\mathcal{H}2)$-(iii), as we will see in the next section.

\subsubsection{A priori estimates and uniqueness}
In this part, we consider drivers $f_1$ and $f_2$ of the RBSDEJ \eqref{basic equation}, both of which are independent of the variable $u$ and depend only on $(y,z)$. Let $(Y^1, Z^1, U^1, K^1)$ and $(Y^2, Z^2, U^2, K^2)$ be two $\mathbb{L}^p$-solutions of the RBSDEJ \eqref{basic equation} associated with the data $(\xi_1, f_1, L)$ and $(\xi_2, f_2, L)$, respectively. More precisely, we define $f_i$ by setting $f_i(t, y, z) := f_i(t, y, z, 0)$ for all $(t, y, z,u) \in [0, T] \times \mathbb{R} \times \mathbb{R}^{d} \times \mathbb{L}^{2}_\lambda$ and for each $i \in \{1, 2\}$. We denote by $\bar{\mathcal{R}} := \mathcal{R}^1 - \mathcal{R}^2$ the difference between the corresponding components $\mathcal{R} \in \{Y, Z, U, \xi, f\}$. To clarify notation, we choose to work with $f(t,y,z)$ instead of $f(t,y,z,0)$

\begin{proposition}\label{Estimation p less stricly than 2}
	For any $\beta > \frac{2(p-1)}{p}$, there exists a constant $\mathfrak{C}_{p,\beta,\epsilon}$\footnote{Throughout this paper, $\mathfrak{c} > 0$ denotes a generic constant that may vary from line to line. In addition, the notation $\mathfrak{C}_{\gamma}$ will be used to emphasize the dependence of a constant $\mathfrak{C} > 0$ on a specific set of parameters $\gamma$, and may likewise vary from one line to another.} such that 
	\begin{equation*}
		\begin{split}
			&\mathbb{E}\left[\sup_{0 \leq t \leq T} e^{\frac{p}{2}\beta A_{t  }}\big|\bar{Y}_{t}\big|^p \right]+\mathbb{E}\left[ \int_{0}^{T}e^{\frac{p}{2}\beta A_s} \big|\bar{Y}_s\big|^p dA_s\right] +\mathbb{E}\left[\left(\int_{0}^{T}e^{\beta A_s} \big\|\bar{Z}_s\big\|^2 ds\right)^{\frac{p}{2}}\right]\\
			&+\mathbb{E}\left[\left(\int_{0}^{T}e^{\beta A_s} \big\|\bar{U}_s\big\|^2_{\mathbb{L}^2_\lambda} ds\right)^{\frac{p}{2}}\right]+\mathbb{E}\left[\left(\int_{0}^{T}e^{\beta A_s}\int_{\mathcal{U}} \big|\bar{U}_s(e)\big|^2 \mu(ds,de)\right)^{\frac{p}{2}}\right]\\
			&\leq \mathfrak{C}_{p,\beta,\epsilon} \left(\mathbb{E}\left[ e^{\frac{p}{2}\beta A_{T}}\big|\bar{\xi}\big|^p\right]  +\mathbb{E}\left[ \int_{0}^{T}e^{\beta A_s}\left|\bar{f}(s,Y^2_s,Z^2_s)\right|^pds\right] \right).
		\end{split}
	\end{equation*}
\end{proposition}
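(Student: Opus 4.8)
The plan is to apply the $p$-order It\^o formula of Corollary~\ref{cor} to the difference process $\bar Y = Y^1 - Y^2$, which solves a RBSDEJ with terminal value $\bar\xi$, driver $f_1(s,Y^1_s,Z^1_s) - f_2(s,Y^2_s,Z^2_s)$ and reflecting term $\bar K = K^1 - K^2$. Evaluating $e^{\frac p2\beta A_t}|\bar Y_t|^p$ this way produces, on the left, the nonnegative quantities $\frac p2\beta\int_t^T e^{\frac p2\beta A_s}|\bar Y_s|^p dA_s$ and $c(p)\int_t^T e^{\frac p2\beta A_s}|\bar Y_s|^{p-2}|\bar Z_s|^2\mathds 1_{\{\bar Y_s\neq 0\}}ds$, and on the right the terminal term $e^{\frac p2\beta A_T}|\bar\xi|^p$, the driver term, the reflection term, the Brownian and $\tilde\mu$ martingale terms, and the jump remainder. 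The first routine step is to show that the reflection contribution $p\int_t^T e^{\frac p2\beta A_s}|\bar Y_{s-}|^{p-1}\hat{\bar Y}_{s-}d\bar K_s \leq 0$: by the Skorokhod conditions \eqref{basic equation}-(iii), $K^1$ increases only where $Y^1=L\leq Y^2$ (so $\hat{\bar Y}_{s-}\leq 0$) and $K^2$ only where $Y^2=L\leq Y^1$ (so $\hat{\bar Y}_{s-}\geq 0$), treating the continuous and predictable-jump parts separately; this term may therefore be discarded.

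Next I would estimate the driver term $p\int_t^T e^{\frac p2\beta A_s}|\bar Y_s|^{p-1}\hat{\bar Y}_s(f_1(s,Y^1_s,Z^1_s)-f_2(s,Y^2_s,Z^2_s))ds$ by inserting $\pm f_1(s,Y^2_s,Z^1_s)$ and $\pm f_1(s,Y^2_s,Z^2_s)$ so as to split it into three pieces. The monotonicity assumption $(\mathcal H2)$-(ii) bounds the first piece by $p\alpha_s|\bar Y_s|^p$, which is nonpositive under the normalization of Remark~\ref{rmq essential} and can be dropped; the Lipschitz-in-$z$ assumption $(\mathcal H2)$-(iii) combined with Young's inequality splits the second piece into a fraction of the quadratic term $\frac{c(p)}2|\bar Y_s|^{p-2}|\bar Z_s|^2$, absorbed on the left, plus a residual $\mathfrak c_p\,\eta_s^2|\bar Y_s|^p$; finally the difference piece, controlled by $p|\bar Y_s|^{p-1}|\bar f(s,Y^2_s,Z^2_s)|$, is handled by Young's inequality with conjugate exponents $(p,q)$. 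Here the choice $\zeta_s^2 = a_s^q$ is essential: using $\eta_s^2 \leq a_s^2 \leq \epsilon^{1-q/2}\zeta_s^2$ and $a_s^{-p}\leq\epsilon^{-p/2}$, both residual terms convert into a multiple of $\int_t^T e^{\frac p2\beta A_s}|\bar Y_s|^p dA_s$ and the target term $\int_t^T e^{\beta A_s}|\bar f(s,Y^2_s,Z^2_s)|^p ds$, with constants depending on $\epsilon$.

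For the jump remainder I would invoke Lemma~\ref{lem1}, which bounds it above by a nonpositive quantity of the form $-c(p)\int_t^T\int_{\mathcal U}e^{\frac p2\beta A_s}|\bar U_s(e)|^2(|\bar Y_{s-}|^2\vee|\bar Y_s|^2)^{(p-2)/2}\mathds 1\,\mu(ds,de)$; moving it to the left and passing to its predictable projection (Remark~\ref{good rmq 1}) yields the weighted quadratic control of $\bar U$ in $\mu$-form. I then take expectations: the two martingale terms vanish by the uniform integrability of Lemma~\ref{rem1}, and choosing $\beta>\frac{2(p-1)}{p}$ keeps the coefficient of $\int_0^T e^{\frac p2\beta A_s}|\bar Y_s|^p dA_s$ strictly positive after all absorptions. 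This produces a first estimate bounding $\mathbb E[\int_0^T e^{\frac p2\beta A_s}|\bar Y_s|^p dA_s]$ together with the weighted quadratic integrals of $\bar Z$ and $\bar U$ by $\mathfrak C_{p,\beta,\epsilon}(\mathbb E[e^{\frac p2\beta A_T}|\bar\xi|^p]+\mathbb E[\int_0^T e^{\beta A_s}|\bar f(s,Y^2_s,Z^2_s)|^p ds])$.

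To reach the stated norms I would, first, obtain the supremum bound $\mathbb E[\sup_t e^{\frac p2\beta A_t}|\bar Y_t|^p]$ by returning to the It\^o inequality, taking the supremum in $t$ and then expectations, controlling the Brownian martingale with the Burkholder--Davis--Gundy inequality and, crucially, the $\tilde\mu$-martingale with Lenglart's domination inequality (BDG being unavailable for the $\frac p2$-moment since $\frac p2<1$; see Remark~\ref{Obse}), then Young-splitting the resulting bounds to absorb $\tfrac12\mathbb E[\sup_t e^{\frac p2\beta A_t}|\bar Y_t|^p]$ and invoking the first estimate. Second, to convert the weighted quadratic term into the $\mathcal H^p_\beta$-norm of $\bar Z$ I would write $e^{\beta A_s}|\bar Z_s|^2 = (e^{\frac p2\beta A_s}|\bar Y_s|^{p-2}|\bar Z_s|^2)(e^{\frac{2-p}2\beta A_s}|\bar Y_s|^{2-p})$, bound the second factor by $(\sup_t e^{\frac p2\beta A_t}|\bar Y_t|^p)^{(2-p)/p}$, and apply Young's inequality with conjugate exponents $(\frac2{2-p},\frac2p)$; the same device handles $\bar U$ in $\mu$-form, after which the concavity inequality of Remark~\ref{Obse} transfers the control to the $\mathfrak L^p_{\lambda,\beta}$-norm. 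Collecting all pieces yields the claim. I expect the main obstacle to be precisely this jump handling for $p\in(1,2)$: generating the quadratic $\bar U$-control through Lemma~\ref{lem1} in the predictable ($\mu$/$\lambda$) form of Remark~\ref{good rmq 1}, controlling the purely discontinuous martingale via Lenglart rather than BDG, and carefully tracking the $\epsilon$-dependent constants produced by the weight $\zeta_s^2=a_s^q$ when absorbing the stochastic-Lipschitz and monotonicity contributions.
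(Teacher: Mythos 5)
Your overall architecture coincides with the paper's proof: the It\^o formula of Corollary~\ref{cor} combined with Lemma~\ref{lem1} and Remark~\ref{good rmq 1}, the Skorokhod sign argument discarding $\int e^{\frac p2\beta A_s}|\bar Y_{s-}|^{p-1}\hat{\bar Y}_{s-}d\bar K_s$, the monotonicity/stochastic-Lipschitz splitting of the driver with Young's inequality and the weight $\zeta_s^2=a_s^q$, expectations via Lemma~\ref{rem1}, a BDG-based supremum estimate, and the interpolation $e^{\beta A_s}|\bar Z_s|^2=\big(e^{\frac p2\beta A_s}|\bar Y_s|^{p-2}|\bar Z_s|^2\big)\big(e^{\frac{2-p}{2}\beta A_s}|\bar Y_s|^{2-p}\big)$ with exponents $\big(\tfrac{2}{2-p},\tfrac{2}{p}\big)$. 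However, there is one genuine gap: your treatment of $\mathbb{E}\big[\big(\int_0^T e^{\beta A_s}\|\bar U_s\|^2_{\mathbb{L}^2_\lambda}ds\big)^{\frac p2}\big]$. The concavity/Lenglart inequality of Remark~\ref{Obse} reads $\mathbb{E}\big[\big(\int_0^T\int_{\mathcal U}e^{\beta A_s}|U_s(e)|^2\mu(ds,de)\big)^{\frac p2}\big]\leq 2\,\mathbb{E}\big[\big(\int_0^T e^{\beta A_s}\|U_s\|^2_\lambda ds\big)^{\frac p2}\big]$, i.e.\ it controls the $\mu$-form \emph{by} the $\lambda$-form, and the same remark stresses that for $p\in(1,2)$ the reverse control is false in general (Lenglart's counterexample). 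Your closing step, using Remark~\ref{Obse} to ``transfer the control to the $\mathfrak L^p_{\lambda,\beta}$-norm'' from the $\mu$-form, applies the inequality in the wrong direction, precisely at the point that makes this regime delicate. The paper instead derives a separate $\lambda$-weighted bound: it localizes with the stopping times $\tau_k$ so that the compensated weighted jump integral $\mathfrak M$ is a true martingale, obtains the identity \eqref{Safe} (possible only because Remark~\ref{good rmq 1} keeps the integrand predictable via $|\bar Y_{s-}+\bar U_s(e)|$ rather than $|\bar Y_s|$), passes to the limit by monotone convergence to get \eqref{First inequality.3}, and then runs the $\nu_\varepsilon$-regularization \eqref{R3} to reach the $\mathfrak L^p_{\lambda,\beta}$-norm. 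Your sketch contains the raw ingredient for the repair (you mention the predictable projection), but you never justify the equality of expectations between the $\mu$- and $\lambda$-forms --- that is exactly what the $\tau_k$ localization is for --- and you route the $\lambda$-estimate through an invalid implication instead.

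Two smaller points. First, your interpolation for $\bar Z$ silently divides by $|\bar Y_s|^{2-p}$ on $\{\bar Y_s=0\}$; the paper disposes of this set by writing the equation forward and invoking \cite[Ch.~I, Corollary 3.16]{jacod2013limit} (a predictable continuous martingale of finite variation vanishes), and for $\bar U$ it needs the $\nu_\varepsilon$-smoothing of \eqref{R1}; bounding the second factor by the supremum does not by itself handle the zero set or the limits $\varepsilon\downarrow 0$. Second, in the supremum step the paper \emph{does} use BDG for the $\tilde\mu$-integral, at the first moment of the supremum, where BDG is valid; the obstruction for $p/2<1$ concerns moments of order $\tfrac p2$ of the bracket, not this step, so your substitution of Lenglart there is unnecessary (though harmless). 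Finally, retaining the residual $\mathfrak c_p\,\eta_s^2|\bar Y_s|^p$ and absorbing it into $\tfrac p2\beta\int e^{\frac p2\beta A_s}|\bar Y_s|^p dA_s$ strains the stated threshold $\beta>\tfrac{2(p-1)}{p}$: the paper absorbs the $\eta$-contribution into the monotonicity via Remark~\ref{rmq essential} (taking $\varepsilon=\tfrac{1}{p-1}$ there, as in \eqref{generator p strictly less than 2}), so that only the $(p-1)$ produced by the Young step on $\bar f$ must be dominated by $\tfrac p2\beta$, which is exactly the stated condition. With these repairs your argument coincides with the paper's.
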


\begin{proof}
	Applying Corollary \ref{cor} in combination with Lemma \ref{lem1} and Remark \ref{good rmq 1} to the semimartingale $e^{\frac{p}{2}\beta A_{t}}\big|\bar{Y}_{t}\big|^p$, we obtain for any $t\in [0,T]$
	\begin{equation}\label{e01-u}
		\begin{split}
			&e^{\frac{p}{2}\beta A_t}|\bar Y_t|^p+\frac{p}{2}\beta\int_t^Te^{\frac{p}{2}\beta A_s}|\bar Y_s|^pdA_s+c(p)\int_t^Te^{\frac{p}{2}\beta A_s}|\bar Y_s|^{p-2}|\bar Z_s|^2\mathds{1}_{\{\bar{Y}_s\neq0\}}ds\\
			&\quad+c(p)\int_t^T\int_{\mathcal{U}}e^{\frac{p}{2}\beta A_s}|\bar U_s(e)|^2\left(|\bar Y_{s-}|^{2}\vee| \bar{Y}_{s-}+\bar{U}_s(e)|^2\right)^{\frac{p-2}{2}}\mathds{1}_{\{|\bar Y_{s-}|\vee|\bar Y_{s}+\bar{U}_s(e)|\neq 0\}}\mu(ds,de)\\
			&\leq e^{\frac{p}{2}\beta A_T}|\bar \xi|^p+p\int_t^Te^{\frac{p}{2}\beta A_s}|\bar Y_{s}|^{p-1}\hat{\bar Y}_{s}(f_1(s,Y^1_s,Z^1_s)-f_2(s,Y^2_s,Z^2_s))ds\\
			&\quad+p\int_t^Te^{\frac{p}{2}\beta A_s}|\bar Y_{s-}|^{p-1}\hat{\bar Y}_{s-}d\bar K_s-p\int_t^Te^{\frac{p}{2}\beta A_s}|\bar Y_s|^{p-1}\hat{\bar Y}_s\bar Z_sdB_s\\
			&\quad-p\int_t^T\int_{\mathcal{U}}e^{\frac{p}{2}\beta A_s}|\bar Y_{s-}|^{p-1}\hat{\bar Y}_{s-}\bar U_s(e)\tilde{\mu}(ds,de).
		\end{split}
	\end{equation}
	Using assumptions $(\mathcal{H}2)$-(ii)-(iii) on the generator $f$ along with Remark \ref{rmq essential} (for the the last line)
	\begin{equation}\label{generator p strictly less than 2}
		\begin{split}
			&p\big|\bar{Y}_s\big|^{p-1}\check{\widehat{Y}}_s \left(f_1(s,Y^1_s, Z^1_s)-f_2(s,Y^2_s, Z^2_s)\right)ds\\
			& = p\mathds{1}_{\{\bar{Y}_s \neq 0\}}\big|\widehat{Y}_s\big|^{p-2} \bar{Y}_t\left(f_1(s,Y^1_s, Z^1_s)-f_2(s,Y^2_s, Z^2_s)\right)ds\\
			& \leq p\mathds{1}_{\{\widehat{Y}_s \neq 0\}}\big|\bar{Y}_s\big|^{p-2}\left(\alpha_s \big|\widehat{Y}_s\big|^2 + \eta_s \big|\bar{Y}_s\big| \big|\bar{Z}_s\big|+\big|\bar{Y}_s\big|^{}\left|\widehat{f}(s,Y^2_s,Z^2_s)\right| \right)ds\\
			& \leq p\alpha_s \big|\widehat{Y}_s\big|^pds+p\big|\widehat{Y}_s\big|^{p-1}\left(\eta_s  \big|\bar{Z}_s\big|  +\left|\bar{f}(s,Y^2_s,Z^2_s)\right|\right)\mathds{1}_{\{\bar{Y}_s \neq 0\}}ds\\
			&\leq  p\left(\alpha_s+\frac{1}{(p-1)} a^2_s\right) \big|\bar{Y}_s\big|^pds +\frac{c(p)}{2}\big|\bar{Y}_s\big|^{p-2}\big|\bar{Z}_s\big|^2\mathds{1}_{\{\bar{Y}_s\neq 0\}} ds+p\big|\bar{Y}_s\big|^{p-1}\left|\bar{f}(s,Y^2_s,Z^2_s)\right|ds\\
			&\leq \frac{c(p)}{2}\big|\bar{Y}_s\big|^{p-2}\big|\bar{Z}_s\big|^2\mathds{1}_{\{\bar{Y}_s\neq 0\}} ds+p\big|\bar{Y}_s\big|^{p-1}\big|\bar{f}(s,Y^2_s,Z^2_s)\big|ds
		\end{split}
	\end{equation}
	Using Young's inequality and $(\mathcal{H}2)$-(v), we have
	\begin{equation}\label{tired 1}
		\begin{split}
			pe^{\frac{p}{2}\beta A_s} \big|\widehat{Y}_s\big|^{p-1}\big|\widehat{f}(s,Y^2_s,Z^2_s)\big|ds 
			&\leq(p-1)e^{\frac{p}{2}\beta A_s}\big|\widehat{Y}_s\big|^{p}dA_s+\frac{1}{\epsilon^{\frac{q}{2}}}e^{\frac{p}{2}\beta A_s}\big|\widehat{f}(s,Y^2_s,Z^2_s)\big|^pds\\
		\end{split}
	\end{equation}
	On the other hand, using \eqref{basic equation}-(ii) and the Skorokhod condition \eqref{basic equation}-(iii), we obtain
	$$
	dK^1_s = \mathds{1}_{\{Y^1_{s-} = L^1_{s-}\}} \, dK^1_s \quad \text{and} \quad dK^2_s = \mathds{1}_{\{Y^2_{s-} = L^2_{s-}\}} \, dK^2_s,
	$$
	which implies
	\begin{equation}\label{Skoro1}
		\begin{split}
			&\int_t^T e^{\frac{p}{2} \beta A_s} |\bar{Y}_{s-}|^{p-1} \hat{\bar{Y}}_{s-} \, d\bar{K}_s \\
			&= \int_t^T e^{\frac{p}{2} \beta A_s} |\bar{Y}_s|^{p-2} \mathds{1}_{\{\bar{Y} \neq 0\}} (Y^1_{s-} - L_{s-}) \, d\bar{K}_s 
			+ \int_t^T e^{\frac{p}{2} \beta A_s} |\bar{Y}_s|^{p-2} \mathds{1}_{\{\bar{Y} \neq 0\}} (L_{s-} - Y^2_{s-}) \, d\bar{K}_s \\
			&\leq - \int_t^T e^{\frac{p}{2} \beta A_s} |\bar{Y}_s|^{p-2} \mathds{1}_{\{\bar{Y} \neq 0\}} (Y^1_{s-} - L_{s-}) \, dK^2_s 
			+ \int_t^T e^{\frac{p}{2} \beta A_s} |\bar{Y}_s|^{p-2} \mathds{1}_{\{\bar{Y} \neq 0\}} (L_{s-} - Y^2_{s-}) \, dK^1_s \leq 0.
		\end{split}
	\end{equation}
	Returning to \eqref{e01-u}, and using \eqref{generator p strictly less than 2}, \eqref{tired 1}, and \eqref{Skoro1}, we take expectations and apply Lemma~\ref{rem1} with the choice $\beta > \frac{2(p-1)}{p}$. We then obtain the existence of a constant $\mathfrak{C}_{p,\beta,\epsilon}$ such that
	\begin{equation}\label{First inequality}
		\begin{split}
			&\mathbb{E} \left[ \int_{0}^{T} e^{\frac{p}{2} \beta A_s} |\bar{Y}_s|^p \, dA_s \right] 
			+ \mathbb{E} \left[ \int_0^T e^{\frac{p}{2} \beta A_s} |\bar{Y}_s|^{p-2} |\bar{Z}_s|^2 \mathds{1}_{\{\bar{Y}_s \neq 0\}} \, ds \right] \\
			&+ \mathbb{E} \left[ \int_0^T \int_{\mathcal{U}} e^{\frac{p}{2} \beta A_s} |\bar{U}_s(e)|^2 \left( |\bar{Y}_{s-}|^2 \vee |\bar{Y}_{s-} + \bar{U}_s(e)|^2 \right)^{\frac{p-2}{2}} \mathds{1}_{\{|\bar{Y}_{s-}| \vee |\bar{Y}_{s-} + \bar{U}_s(e)| \neq 0\}} \, \mu(ds,de) \right] \\
			&\leq \mathfrak{C}_{p,\beta,\epsilon} \left( \mathbb{E} \left[ e^{\frac{p}{2} \beta A_T} |\bar{\xi}|^p \right] 
			+ \mathbb{E} \left[ \int_0^T e^{\frac{p}{2} \beta A_s} |\bar{f}(s, Y^2_s, Z^2_s)|^p \, ds\right]  \right).
		\end{split}
	\end{equation}
	We now return once again to \eqref{e01-u}, and let $\{\tau_k\}_{k \geq 1}$ be the sequence of stopping times defined by
	$$
	\tau_k := \inf \left\{ t \geq 0 : \int_0^t \int_{\mathcal{U}} e^{\frac{p}{2} \beta A_s} |\bar U_s(e)|^2 \left( |\bar Y_{s-}|^2 \vee |\bar Y_{s-} + \bar U_s(e)|^2 \right)^{\frac{p-2}{2}} \mathds{1}_{\{|\bar Y_{s-}| \vee |\bar Y_{s-} + \bar U_s(e)| \neq 0\}} \, \lambda(de) \, ds \geq k \right\} \wedge T.
	$$
	This defines a localizing (of a fundamental) sequence for the local martingale
	$$
	\mathfrak{M} := \int_0^{\cdot} \int_{\mathcal{U}} e^{\frac{p}{2} \beta A_s} |\bar{U}_s(e)|^2 \left( |\bar{Y}_{s-}|^2 \vee |\bar{Y}_{s-} + \bar U_s(e)|^2 \right)^{\frac{p-2}{2}} \mathds{1}_{\{|\bar{Y}_{s-}| \vee |\bar{Y}_s| \neq 0\}} \, \tilde{\mu}(ds,de).
	$$
	Moreover, for each $k \geq 1$, we have
	\begin{equation}\label{Safe}
		\begin{split}
			&\mathbb{E} \left[ \int_0^{\tau_k} \int_{\mathcal{U}} e^{\frac{p}{2} \beta A_s} |\bar{U}_s(e)|^2 \left( |\bar{Y}_{s-}|^2 \vee |\bar{Y}_{s-} + \bar U_s(e)|^2 \right)^{\frac{p-2}{2}} \mathds{1}_{\{|\bar{Y}_{s-}| \vee |\bar{Y}_s| \neq 0\}} \, \mu(ds,de) \right] \\
			&= \mathbb{E} \left[ \int_0^{\tau_k} \int_{\mathcal{U}} e^{\frac{p}{2} \beta A_s} |\bar U_s(e)|^2 \left( |\bar Y_{s-}|^2 \vee |\bar Y_{s-} + \bar U_s(e)|^2 \right)^{\frac{p-2}{2}} \mathds{1}_{\{|\bar Y_{s-}| \vee |\bar Y_{s-} + \bar U_s(e)| \neq 0\}} \, \lambda(de) \, ds \right].
		\end{split}
	\end{equation}
	We note here that, in contrast to \cite[equality (31)]{Kruse18052016} and \cite[p.~2]{Kruse17112017}, by following Remark \ref{good rmq 1} and keeping the predictable process $U(\cdot)$ in the formulation of the compensator of $\mathfrak{M}$, which corresponds to the jump term at totally inaccessible jump times of $Y$ induced by the Poisson random measure $\mu$, the equality \eqref{Safe} remains valid within our framework.
	
	Next, by reapplying Corollary~\ref{cor} in combination with Lemma \ref{lem1} on the time interval $[0, \tau_k]$ for each $k \geq 1$, and by repeating the same computations that led to inequality \eqref{First inequality}, together with the validity of equality \eqref{Safe}, we obtain:
	\begin{equation}\label{First inequality.1}
		\begin{split}
			&\mathbb{E}\left[  \int_{0}^{\tau_k} e^{\frac{p}{2} \beta A_s} |\bar{Y}_s|^p \, dA_s \right] 
			+ \mathbb{E} \left[ \int_0^{\tau_k} e^{\frac{p}{2} \beta A_s} |\bar{Y}_s|^{p-2} |\bar{Z}_s|^2 \mathds{1}_{\{\bar{Y}_s \neq 0\}} \, ds \right] \\
			&+ \mathbb{E} \left[ \int_0^{\tau_k} \int_{\mathcal{U}} e^{\frac{p}{2} \beta A_s} |\bar U_s(e)|^2 \left( |Y_{s-}|^2 \vee |\bar Y_{s-} + \bar U_s(e)|^2 \right)^{\frac{p-2}{2}} \mathds{1}_{\{|\bar Y_{s-}| \vee |\bar Y_{s-} + \bar U_s(e)| \neq 0\}} \, \lambda(de) \, ds \right] \\
			&\leq \mathfrak{C}_{p,\beta,\epsilon} \left( \mathbb{E} \left[ e^{\frac{p}{2} \beta A_T} |\bar{Y}_{\tau_k}|^p \right] 
			+ \mathbb{E}\left[  \int_0^{\tau_k} e^{\frac{p}{2} \beta A_s} |\bar{f}(s, Y^2_s, Z^2_s)|^p \, ds\right]  \right).
		\end{split}
	\end{equation}
	By assumption $(\mathcal{H}1)$, we have $\mathbb{E}\left[e^{\frac{p}{2}\beta A_{T}} |\bar{\xi}|^p\right] < +\infty$. Then, by the dominated convergence theorem, it follows that 
	$$
	e^{\frac{p}{2} \beta A_{\tau_k}} |\bar{Y}_{\tau_k}|^p \longrightarrow e^{\frac{p}{2} \beta A_T} |\bar{\xi}|^p \quad \text{in } \mathbb{L}^1 \text{ as } k \rightarrow +\infty.
	$$
	Next, applying the monotone convergence theorem, and passing to the limit on both sides of inequality \eqref{First inequality.1} as $k \to +\infty$, we obtain
	\begin{equation}\label{First inequality.3}
		\begin{split}
			& \mathbb{E} \left[ \int_0^{T} \int_{\mathcal{U}} e^{\frac{p}{2} \beta A_s} |\bar U_s(e)|^2 \left( |\bar Y_{s-}|^2 \vee |\bar Y_{s-} + \bar U_s(e)|^2 \right)^{\frac{p-2}{2}} \mathds{1}_{\{|\bar Y_{s-}| \vee |\bar Y_{s-} + \bar U_s(e)| \neq 0\}} \, \lambda(de) \, ds \right] \\
			&\leq \mathfrak{C}_{p,\beta,\epsilon} \left( \mathbb{E} \left[ e^{\frac{p}{2} \beta A_T} |\bar{\xi}|^p \right] 
			+ \mathbb{E}\left[  \int_0^{T} e^{\frac{p}{2} \beta A_s} |\bar{f}(s, Y^2_s, Z^2_s)|^p \, ds\right]  \right).
		\end{split}
	\end{equation}
	
	Let us now deal with the uniform estimation for the state variable $Y$. From \eqref{e01-u} applying \eqref{Skoro1} and taking the supremum, we get
	\begin{equation}\label{sup for p less than 2}
		\begin{split}
			\mathbb{E}\left[\sup_{0 \leq t \leq T } e^{\frac{p}{2}\beta A_{t  }}\big|\bar{Y}_{t}\big|^p \right]
			\leq& \mathbb{E}\left[ e^{\frac{p}{2}\beta A_{T}}\big|\bar{\xi}\big|^p\right] +p\mathbb{E}\int_{0}^{T}e^{\frac{p}{2}\beta A_s} \big|\bar{Y}_s\big|^{p-1}\left|\bar{f}(s,Y^2_s,Z^2_s)\right|ds  \\
			&+p\mathbb{E}\left[\sup_{t \in [0,T]}\left|\int_{t}^{T}e^{\frac{p}{2}\beta A_s} \big|\bar{Y}_s\big|^{p-1} \hat{\bar{Y}}_s \widehat{Z}_s dB_s\right|\right]\\
			& +p\mathbb{E}\left[\sup_{t \in [0,T]}\left|\int_{t}^{T}e^{\frac{p}{2}\beta A_s}\int_{E} \big|\bar{Y}_{s-}\big|^{p-1} \hat{\bar{Y}}_{s-} \bar{U}_s(e)\tilde{\mu}(ds,de)\right|\right]
		\end{split}
	\end{equation}
	Using Lemma 8 in \cite{Kruse18052016} and the BDG inequality, we have
	\begin{equation*}
		\begin{split}
			&p\mathbb{E}\left[\sup_{0 \leq t \leq T }\left|\int_{t}^{T}e^{\frac{p}{2}\beta A_s} \big|\bar{Y}_s\big|^{p-1} \hat{\bar{Y}}_s \bar{Z}_s dB_s\right|\right]\\
			&\leq p \mathfrak{c} \mathbb{E}\left[\left(\int_{0}^{T}e^{p\beta A_s} \big|\bar{Y}_s\big|^{2(p-1)} \big|\bar{Z}_s\big|^2 \mathds{1}_{\{\bar{Y}_s \neq 0\}} ds\right)^{\frac{1}{2}}\right]\\
			&\leq \mathbb{E}\left[\left( \sup_{t \in [0,T]}e^{\frac{p}{4}\beta}\big|\bar{Y}_s\big|^{\frac{p}{2}}\right) \left( p^2 \mathfrak{c}^2\int_{0}^{T}e^{\frac{p}{2}\beta A_s} \big|\bar{Y}_s\big|^{p-2} \big|\bar{Z}_s\big|^2 \mathds{1}_{\{\bar{Y}_s \neq 0\}} ds\right)^{\frac{1}{2}}\right]\\
			&\leq \frac{1}{6} \mathbb{E}\left[\sup_{t \in [0,T]}e^{\frac{p}{2}\beta}\big|\bar{Y}_s\big|^{p}\right]+\frac{3}{2}p^2 \mathfrak{c}^2\mathbb{E}\left[\int_{0}^{T}e^{\frac{p}{2}\beta A_s} \big|\bar{Y}_s\big|^{p-2} \big|\bar{Z}_s\big|^2 \mathds{1}_{\{\bar{Y}_s \neq 0\}} ds\right]
		\end{split}
	\end{equation*}
	the last line is due to the basic inequality $ab \leq \frac{1}{6}a^2+\frac{3}{2} b^2$ (recall that $\mathfrak{c}>0$ is the BDG universal constant). 
	
	Again, by the BDG inequality, we have
	\begin{equation*}
		\begin{split}
			&\mathbb{E}\left[ \sup_{0\leq t\leq T}\left|\int_t^T\int_{\mathcal{U}}e^{\frac{p}{2}\beta A_s}|\widehat{Y}_{s-}|^{p-1}\hat{\bar{Y}}_{s-} \bar{U}_s(e)\tilde{\mu}(ds,de)\right|\right] \\
			&\leq \mathfrak{c}\mathbb{E}\left[\left(\int_0^T\int_{\mathcal{U}}e^{p\beta A_s}|\bar{Y}_{s-}|^{2(p-1)}|\bar{U}_s(e)|^2 \mu (ds,de)\right)^{\frac{1}{2}}\right]
		\end{split}
	\end{equation*}
	Following this, Remark \ref{good rmq 1} and using the basic inequality $ab \leq \frac{1}{6}a^2+\frac{3}{2} b^2$, we have
	\begin{equation*}
		\begin{split}
			&p\mathfrak{c}\mathbb{E}\left[\left(\int_0^T\int_{\mathcal{U}}e^{p\beta A_s}|\bar{Y}_{s-}|^{2(p-1)}|\bar{U}_s(e)|^2 \mu (ds,de)\right)^{\frac{1}{2}}\right]\\
			& \leq p\mathfrak{c}\mathbb{E}\left[\left(\int_0^T\int_{\mathcal{U}}e^{p\beta A_s}\big(|\bar{Y}_{s-}|^{2} \vee |\bar{Y}_{s-}+U_s(e)|^{2}\big)^{p-1}|\bar{U}_s(e)|^2 \mu (ds,de)\right)^{\frac{1}{2}}\right]\\
			&\leq\mathfrak{c}\mathbb{E}\left[\left( \sup_{0\leq t\leq T}e^{\frac{p}{4}\beta A_t}|\bar{Y}_{t}|^{\frac{p}{2}}\right) \right.\\
			&\left. \qquad \times p \left( \int_0^T\int_{\mathcal{U}}e^{\frac{p}{2}\beta A_s}\left(|\bar{Y}_{s-}|^{2}\vee|\bar{Y}_{s-}+\bar U_s(e)|^2\right)^{\frac{p-2}{2}}\mathds{1}_{\{|\widehat{Y}_{s-}|\vee|\bar{Y}_{s}|\neq 0\}}|\bar{U}_s(e)|^2 \mu(ds,de)\right)^{\frac{1}{2}}\right]\\
			&\leq\frac{1}{6}\mathbb{E}\left[\sup_{0\leq t\leq T}e^{\frac{p}{2}\beta A_t}|\bar{Y}_{t}|^{p}\right]\\
			&\qquad+\frac{3}{2}p^2 \mathfrak{c}^2\mathbb{E}\left[\int_0^T\int_{E}e^{\frac{p}{2}\beta A_s}\left(|\bar{Y}_{s-}|^{2}\vee|\bar{Y}_{s-}+\bar U_s(e)|^2\right)^{\frac{p-2}{2}}\mathds{1}_{\{|\bar{Y}_{s-}|\vee\bar{Y}_{s-}+\bar U_s(e)\neq 0\}}|\bar{U}_s(e)|^2 \mu(ds,de)\right].
		\end{split}
	\end{equation*}
	Finally using Young and Jensen inequalities, we get 
	\begin{equation}\label{Paranoia}
		\begin{split}
			&p\mathbb{E}\left[\int_{0}^{T}e^{\frac{p}{2}\beta A_s} \big|\bar{Y}_s\big|^{p-1}\left|\bar{f}(s,Y^2_s,Z^2_s)\right|ds\right]\\
			&=p	\mathbb{E}\left[\int_{0}^{T}e^{\frac{p-1}{2}\beta A_s} \big|\bar{Y}_s\big|^{p-1} e^{\frac{\beta}{2} A_s}\left|\bar{f}(s,Y^2_s,Z^2_s)\right|ds\right]\\
			& \leq p\mathbb{E}\left[\left(\frac{1}{6(p-1)}\right)^{\frac{p-1}{p}}\left(\sup_{0 \leq t \leq T}e^{\frac{p-1}{2}\beta A_s} \big|\bar{Y}_s\big|^{p-1}\right)\left(\frac{1}{6(p-1)}\right)^{\frac{1-p}{p}}\int_{0}^{T} e^{\frac{\beta}{2} A_s}\left|\bar{f}(s,Y^2_s,Z^2_s)\right|ds\right]\\
			& \leq \frac{1}{6}\mathbb{E}\left[\sup_{0 \leq t \leq T}e^{\frac{p}{2}\beta A_s}\big|\bar{Y}_t\big|^{p}\right]+\left(6 T(p-1)\right)^{p-1}\mathbb{E}\left[\int_{0}^{T} e^{\frac{p}{2}\beta A_s}\left|\bar{f}(s,Y^2_s,Z^2_s)\right|^p ds\right]
		\end{split}
	\end{equation}
	Using \eqref{First inequality} and \eqref{sup for p less than 2}, we get
	\begin{equation}\label{First inequality.4}
		\begin{split}
			&\mathbb{E} \left[ \sup_{0 \leq t \leq T } e^{\frac{p}{2}\beta A_{t  }}\big|\bar{Y}_{t}\big|^p \right]  
			\leq \mathfrak{C}_{p,\beta,\epsilon} \left( \mathbb{E} \left[ e^{\frac{p}{2} \beta A_T} |\bar{\xi}|^p \right] 
			+ \mathbb{E} \left[ \int_0^T e^{\frac{p}{2} \beta A_s} |\bar{f}(s, Y^2_s, Z^2_s)|^p \, ds\right]  \right).
		\end{split}
	\end{equation}
	
	We now deal with the desire estimations for the terms:
	$$
	\mathbb{E}\left[\left(\int_{0}^{T}\int_{\mathcal{U}}e^{\beta A_s} \big|\bar{U}_s(e)\big|^2 \mu(ds,de)\right)^{\frac{p}{2}}\right] \quad \text{ and } \quad \mathbb{E}\left[\left(\int_{0}^{T}e^{\beta A_s} \big\|\bar{U}_s\big\|^2_{\mathbb{L}^2_\lambda} ds\right)^{\frac{p}{2}}\right].
	$$
	Note that for the above term involving the stochastic integral with respect to the jump measure ${\mu}(dt,de)$ ot it's predictable compensator $\lambda(de)dt$, we need to use an approximation approach. This regularization is the one used, for instance, in \cite[p. 20]{elmansouri2025} or \cite[pp. 32-33]{Kruse18052016}.\\ 
	Let $\varepsilon > 0$ and define the function $\nu_\varepsilon : \mathbb{R} \rightarrow \mathbb{R}$ by
	$$
	\nu_\varepsilon(y) := \left(|y|^2 + \varepsilon^2\right)^{\frac{1}{2}}, \quad \text{for all } y \in \mathbb{R}.
	$$
	from the definition of the function $\nu_\varepsilon$, it is easy to see that 
	\begin{equation}
		\begin{split}
			e^{\frac{2-p}{4}\beta A_s}\left( \nu_\varepsilon\left( \big|\widehat{Y}_{s-}\big|\vee \big|\widehat{Y}_{s-}+\bar{U}_s(e)\big|\right) \right) ^{\frac{2-p}{2}}&=\left( e^{\frac{1}{2}\beta A_s}\nu_\varepsilon\left( \big|\bar{Y}_{s-}\big|\vee \big|\bar{Y}_{s-}+\bar{U}_s(e)\big|\right) \right) ^{\frac{2-p}{2}}\\
			&=\left[ \nu_{\varepsilon_p}\left( e^{\frac{1}{2}\beta A_s}\left( \big|\bar{Y}_{s-}\big|\vee \big|\bar{Y}_{s-}+\bar{U}_s(e)\big|\right) \right) \right] ^{\frac{2-p}{2}}	
		\end{split}
	\end{equation}
	with $\varepsilon_p:=\varepsilon e^{\frac{1}{2}\beta A_s}$ which tend to zero as $\varepsilon \downarrow 0$ since $e^{\frac{1}{2}\beta A_s} \leq e^{\beta A_T}<+\infty$ a.s. This last property is derived from assumption $(\mathcal{H}2)$-(iv) implying $\mathbb{E}\left[\int_{0}^{T}e^{\beta A_s} ds\right]<+\infty$. Additionally, for any $p \in (1,2)$, note that $\lim\limits_{\varepsilon \downarrow 0} \left(\nu_\varepsilon(y)\right)^{p}=\big|y\big|^p$ and $\lim\limits_{\varepsilon \downarrow 0} \left(\nu_\varepsilon(y)\right)^{p-2}=\big|y\big|^{p-2}\mathds{1}_{\{y \neq 0\}}$. 
	\begin{equation}\label{R1}
		\begin{split}
			&\mathbb{E}\left[\left(\int_{0}^{T}\int_{\mathcal{U}}e^{\beta A_s} \big|\bar{U}_s(e)\big|^2 \mu(ds,de)\right)^{\frac{p}{2}}\right]\\
			&=\mathbb{E}\left[\left(\int_{0}^{T}e^{\frac{2-p}{2}\beta A_s}\int_{\mathcal{U}}\left(\nu_\varepsilon(\big|\bar{Y}_{s-}\big|\vee \big|\bar{Y}_{s-}+\bar{U}_s(e)\big|\big)\right)^{2-p} \right.\right.\\
			&\left.\left.\qquad\qquad \times \left(\nu_\varepsilon(\big|\bar{Y}_{s-}\big|\vee \big|\bar{Y}_{s-}+\bar{U}_s(e)\big|\big)\right)^{p-2}e^{\frac{p}{2} \beta A_s}  \big|\bar{U}_s(e)\big|^2 \mu(ds,de)\right)^{\frac{p}{2}}\right]\\
			&=\mathbb{E}\left[\left(\int_{0}^{T}\left(e^{\frac{2-p}{4}\beta A_s}\left(\nu_\varepsilon(\big|\widehat{Y}_{s-}\big|\vee \big|\bar{Y}_s\big|\big)\right)^{\frac{2-p}{2}}\right)^2 \right.\right. \\
			&\left.\left. \qquad\qquad \times \int_{\mathcal{U}}  \left(\nu_\varepsilon(\big|\widehat{Y}_{s-}\big|\vee \big|\bar{Y}_{s-}+\bar{U}_s(e)\big|\big)\right)^{p-2} e^{\frac{p}{2} \beta A_s}\big|\bar{U}_s(e)\big|^2 \mu(ds,de)\right)^{\frac{p}{2}}\right]\\
			&=\mathbb{E}\left[\left(\int_{0}^{T}\left(\nu_{\varepsilon_p}(e^{\frac{1}{2}\beta A_s}\big|\bar{Y}_{s-}\big|\vee \big|\bar{Y}_{s}\big|)\right)^{2-p} \right.\right.\\
			&\left.\left. \qquad\qquad \times \int_{\mathcal{U}}  \left(\nu_\varepsilon(|\bar{Y}_{s-}| \vee |\bar{Y}_{s-}+\bar{U}_s(e)|)\right)^{p-2}e^{\frac{p}{2} \beta A_s}  \big|\bar{U}_s(e)\big|^2 \mu(ds,de)\right)^{\frac{p}{2}}\right]\\
			&\leq \mathbb{E}\left[\left(\nu_{\varepsilon_p}\left(\left( e^{\frac{1}{2}\beta A} \bar{Y}\right)_\ast\right)\right)^{p\frac{2-p}{2}}\left(\int_{0}^{T} e^{\frac{p}{2}\beta A_s}\int_{\mathcal{U}} \left(\nu_\varepsilon\big(\big|\bar{Y}_{s-}\big|\vee \big|\bar{Y}_{s-}+\bar{U}_s(e)\big|\big)\right)^{p-2}  \big|\widehat{U}_s(e)\big|^2 \mu(ds,de)\right)^{\frac{p}{2}}\right]\\
			& \leq \frac{2-p}{2}\mathbb{E}\left[\left(\nu_{\varepsilon_p}\left(\left( e^{\frac{1}{2}\beta A} \widehat{Y}\right)_\ast\right)\right)^{p}\right]+\frac{p}{2} \mathbb{E}\left[\int_{0}^{T} e^{\frac{p}{2}\beta A_s}\int_{\mathcal{U}} \left(\nu_\varepsilon\big(\big|\widehat{Y}_{s-}\big|\vee \big|\bar{Y}_{s-}+\bar{U}_s(e)\big|\big)\right)^{p-2}  \big|\bar{U}_s(e)\big|^2 \mu(ds,de)\right].
		\end{split}
	\end{equation}
	Let $\varepsilon$ tend to zero.  We can use a convergence theorem, which is a consequence of the estimations \eqref{First inequality} and \eqref{First inequality.4} to derive
	\begin{equation*}
		\begin{split}
			&\mathbb{E}\left[\left(\int_{0}^{T} \int_{\mathcal{U}} e^{\beta A_s}\big|\bar{U}_s(e)\big|^2 \mu(ds,de)\right)^{\frac{p}{2}}\right]\\
			& \leq \frac{2-p}{2}\mathbb{E}\left[\sup_{0 \leq t \leq T} e^{\frac{p}{2}\beta A_{t  }}\big|\bar{Y}_{t}\big|^p\right]\\
			&\qquad+\frac{p}{2} \mathbb{E}\left[\int_{0}^{T}e^{\frac{p}{2}\beta A_s}\int_{\mathcal{U}}\left(|\bar{Y}_{s-}|^{2}\vee|\bar{Y}_{s-}+\bar{U}_s(e)|^2\right)^{\frac{p-2}{2}}\mathds{1}_{\{|\widehat{Y}_{s-}|\vee|\bar{Y}_{s-}+\bar{U}_s(e)|\neq 0\}}\big|\bar{U}_s(e)\big|^2\mu(ds,de)\right]\\
			& \leq  \mathfrak{C}_{p,\beta,\epsilon}  \left(\mathbb{E}\left[ e^{\frac{p}{2}\beta A_{T}}\big|\bar{\xi}\big|^p\right]  +\mathbb{E}\left[ \int_{0}^{T}e^{\beta A_s}\left|\bar{f}(s,Y^2_s,Z^2_s)\right|^p ds\right] \right).
		\end{split}
	\end{equation*}
	By a similar argument and using \eqref{First inequality.3}, we can derive that 
	\begin{equation}\label{R3}
		\begin{split}
			&\mathbb{E}\left[\left(\int_{0}^{T}e^{\beta A_s} \big\|\bar{U}_s\big\|^2_{\mathbb{L}^2_\lambda} ds\right)^{\frac{p}{2}}\right]\\
			& \leq \frac{2-p}{2}\mathbb{E}\left[\sup_{0 \leq t \leq T} e^{\frac{p}{2}\beta A_{t  }}\big|\bar {Y}_{t}\big|^p\right]\\
			&\qquad+\frac{p}{2} \mathbb{E} \left[ \int_0^{T}  e^{\frac{p}{2} \beta A_s} \int_{\mathcal{U}}  \left( |\bar Y_{s-}|^2 \vee |\bar Y_{s-} + \bar U_s(e)|^2 \right)^{\frac{p-2}{2}} \mathds{1}_{\{|\bar Y_{s-}| \vee |\bar Y_{s-} + \bar U_s(e)| \neq 0\}} |\bar U_s(e)|^2 \, \lambda(de) \, ds \right]\\
			& \leq  \mathfrak{C}_{p,\beta,\epsilon}\left(\mathbb{E}\left[ e^{\frac{p}{2}\beta A_{T}}\big|\bar{\xi}\big|^p\right]  +\mathbb{E}\left[ \int_{0}^{T}e^{\beta A_s}\left|\bar{f}(s,Y^2_s,Z^2_s)\right|^p ds\right] \right).
		\end{split}
	\end{equation}
	
	To conclude the proof of the proposition, it remains to show the estimation for the remaining term 
	$$
	\mathbb{E}\left[\left(\int_{0}^{T}e^{\beta A_s} \big\|\widehat{Z}_s\big\|^2 ds\right)^{\frac{p}{2}}\right].
	$$
	To this end, note that if $\widehat{Y}=0$ then after writing the BSDE \eqref{basic equation}-(i) forwardly, we derive that the continuous martingale part $\left(\int_{0}^{t}\bar{Z}_s dB_s\right)_{t \leq T}$ have finite variation over $[0,T]$. Since it is predictable, we derive from \cite[Ch I. Corollary 3.16]{jacod2013limit} that $\int_{0}^{\cdot}\bar{Z}_s dB_s=0$ up to an evanescent set. Then, using this with Young's inequality and \eqref{First inequality.1}, \eqref{First inequality.4},  we have
	\begin{equation}\label{R2}
		\begin{split}
			&\mathbb{E}\left[\left(\int_{0}^{T}e^{\beta A_s} \big|\bar{Z}_s\big|^2 ds\right)^{\frac{p}{2}}\right]\\
			&=\mathbb{E}\left[\left(\int_{0}^{T}e^{\beta A_s} \big|\bar{Z}_s\big|^2 \mathds{1}_{\{\bar{Y}_s\neq0\}} ds\right)^{\frac{p}{2}}\right]\\
			& \leq \mathbb{E}\left[\left( \sup_{t \in [0,T]}\left(e^{\frac{p(2-p)}{4}\beta A_s} \big|\bar{Y}_s\big|^{\frac{p(2-p)}{2}}\right)\right) \left(\int_{0}^{T}  \big|\bar{Y}_s\big|^{p-2} \big|\bar{Z}_s\big|^2\mathds{1}_{\{\bar{Y}_s\neq0\}} ds\right)^{\frac{p}{2}}\right]\\
			& \leq \frac{2-p}{2}\mathbb{E}\left[\sup_{t \in [0,T]} e^{\frac{p}{2}\beta A_{t  }}\big|\bar{Y}_{t}\big|^p\right]+\frac{p}{2} \mathbb{E}\left[\int_{0}^{T}e^{\frac{p}{2}\beta A_s}\big|\bar{Y}_{s}\big|^{p-2} \big|\bar{Z}_s\big|^2 \mathds{1}_{\{\bar{Y}_{s}\neq 0\} }ds\right]\\
			& \leq   \mathfrak{C}_{p,\beta,\epsilon} \left(\mathbb{E}\left[ e^{\frac{p}{2}\beta A_{T}}\big|\bar{\xi}\big|^p\right]  +\mathbb{E}\left[ \int_{0}^{T}e^{\beta A_s}\left|\bar{f}(s,Y^2_s,Z^2_s)\right|^pds\right] \right).
		\end{split}
	\end{equation}
	Completing the proof.
\end{proof}

Following the arguments developed in Proposition~\ref{Estimation p less stricly than 2}, we obtain the following estimate for $\mathbb{L}^p$-solutions of the RBSDEJ \eqref{basic equation}, in the case where the driver is independent of the $u$-variable.
\begin{proposition}\label{Estimation p less stricly than 2.1}
	Given any $\mathbb{L}^p$-solution $(Y, Z, U, K)$ of the RBSDEJ \eqref{basic equation} associated with the data $(\xi, f, L)$, under assumptions $(\mathcal{H}1)$--$(\mathcal{H}3)$ and for a sufficiently large parameter $\beta$\footnote{The choice of the parameter $\beta$ here differs from that in Proposition~\ref{Estimation p less stricly than 2}, as additional Lebesgue-Stieltjes terms of the form $\int_{t}^{T} e^{\frac{p}{2} \beta A_s} |Y_s|^p \, dA_s$ appear in the estimates, as seen for example in inequalities \eqref{tired 1} and \eqref{K.3}.}, assume further that $f(t, y, z, u) = f(t, y, z, 0)$ for all $(t, y, z, u) \in [0,T] \times \mathbb{R}^{1+d} \times \mathbb{L}^2_\lambda$. Then, the following estimate holds:
	\begin{equation*}
		\begin{split}
			&\mathbb{E}\left[\sup_{0 \leq t \leq T} e^{\frac{p}{2}\beta A_{t  }}\big|{Y}_{t}\big|^p \right]+\mathbb{E}\left[ \int_{0}^{T}e^{\frac{p}{2}\beta A_s} \big|{Y}_s\big|^p dA_s\right] +\mathbb{E}\left[\left(\int_{0}^{T}e^{\beta A_s} \big|{Z}_s\big|^2 ds\right)^{\frac{p}{2}}\right]\\
			&+\mathbb{E}\left[\left(\int_{0}^{T}e^{\beta A_s} \big\|{U}_s\big\|^2_{\mathbb{L}^2_\lambda} ds\right)^{\frac{p}{2}}\right]+\mathbb{E}\left[\left(\int_{0}^{T}e^{\beta A_s}\int_{\mathcal{U}} \big|{U}_s(e)\big|^2 \mu(ds,de)\right)^{\frac{p}{2}}\right]+\mathbb{E}\left[\left|K_T\right|^p\right]\\
			&\leq \mathfrak{C}_{p,\beta,\epsilon} \left(\mathbb{E}\left[ e^{\frac{p}{2}\beta A_{T}}\big|{\xi}\big|^p\right]  +\mathbb{E}\left[ \int_{0}^{T}e^{\beta A_s}\left|\varphi_s\right|^pds\right]+\mathbb{E}\left[\sup\limits_{0\leq t\leq T}\left|e^{\frac{q}{2}\beta A_t}L_t^+\right|^{p}\right] \right).
		\end{split}
	\end{equation*}
\end{proposition}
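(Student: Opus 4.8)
The plan is to follow the strategy of Proposition~\ref{Estimation p less stricly than 2}, but applied to the single $\mathbb{L}^p$-solution $(Y,Z,U,K)$ rather than to a difference, and to account for the three features that are new here: the presence of the reflection term $p\int_t^T e^{\frac{p}{2}\beta A_s}|Y_{s-}|^{p-1}\hat{Y}_{s-}\,dK_s$ in the It\^{o}--Meyer formula, the appearance of the obstacle contribution $\mathbb{E}[\sup_t|e^{\frac{q}{2}\beta A_t}L_t^+|^p]$ on the right-hand side, and the estimate of $\mathbb{E}[|K_T|^p]$ on the left-hand side. First I would apply Corollary~\ref{cor} to $e^{\frac{p}{2}\beta A_t}|Y_t|^p$ and combine it with Lemma~\ref{lem1} and Remark~\ref{good rmq 1}, exactly as in \eqref{e01-u}, to obtain for every $t\in[0,T]$ an inequality whose left-hand side collects $e^{\frac{p}{2}\beta A_t}|Y_t|^p$, the term $\frac{p}{2}\beta\int_t^T e^{\frac{p}{2}\beta A_s}|Y_s|^p\,dA_s$, the $Z$-term $c(p)\int_t^T e^{\frac{p}{2}\beta A_s}|Y_s|^{p-2}|Z_s|^2\mathds{1}_{\{Y_s\neq0\}}\,ds$ and the $U$-jump term, while the right-hand side carries $e^{\frac{p}{2}\beta A_T}|\xi|^p$, the generator term, the reflection term, and the two local martingales.

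Next I would estimate the generator term through the decomposition $f(s,Y_s,Z_s)=[f(s,Y_s,Z_s)-f(s,0,Z_s)]+[f(s,0,Z_s)-f(s,0,0)]+f(s,0,0)$: the monotonicity $(\mathcal{H}2)$-(ii) (with $y'=0$) bounds the first bracket by $\alpha_s|Y_s|$, the stochastic Lipschitz condition $(\mathcal{H}2)$-(iii) (with $z'=0$) bounds the second by $\eta_s|Z_s|$, and the growth condition $(\mathcal{H}2)$-(iv) (with $y=z=0$) bounds the third by $\varphi_s$. Proceeding as in \eqref{generator p strictly less than 2}, the $\eta_s|Z_s|$ contribution is split by Young's inequality so that half of the $Z$-term is absorbed into the left-hand side while the remainder produces an $\eta_s^2|Y_s|^p$ term; together with the monotone term and the factor $\frac{1}{p-1}a_s^2$ arising from $a_s^2=\phi_s+\eta_s^2+\delta_s^2$, Remark~\ref{rmq essential} renders these $|Y_s|^p$-coefficients nonpositive. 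The growth contribution $p|Y_s|^{p-1}\varphi_s$ is treated by the $\zeta$-weighted Young inequality \eqref{tired 1}, yielding a term $(p-1)e^{\frac{p}{2}\beta A_s}|Y_s|^p\,dA_s$ (absorbed into $\frac{p}{2}\beta\int|Y_s|^p\,dA_s$ once $\beta$ is large enough) plus a term proportional to $e^{\frac{p}{2}\beta A_s}|\varphi_s|^p$.

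The decisive new step is the reflection term. By the Skorokhod and minimality conditions \eqref{basic equation}-(iii), the measure $dK$ is carried by $\{Y_{s-}=L_{s-}\}$, so on its support $|Y_{s-}|^{p-1}\hat{Y}_{s-}=L_{s-}|L_{s-}|^{p-2}\leq (L_{s-}^+)^{p-1}$, whence
\begin{equation*}
p\int_t^T e^{\frac{p}{2}\beta A_s}|Y_{s-}|^{p-1}\hat{Y}_{s-}\,dK_s \leq p\int_t^T e^{\frac{p}{2}\beta A_s}(L_{s-}^+)^{p-1}\,dK_s \leq p\,K_T\,\sup_{s\leq T}\big(e^{\frac{p}{2}\beta A_s}(L_{s-}^+)^{p-1}\big).
\end{equation*}
Taking expectations and applying Young's inequality with the conjugate pair $(q,p)$ — noting $(p-1)q=p$ and $\frac{p}{2}\cdot q=\frac{qp}{2}$, so that the supremum raises exactly to $\sup_s e^{\frac{qp}{2}\beta A_s}(L_{s-}^+)^p=\sup_s|e^{\frac{q}{2}\beta A_s}L_{s-}^+|^p$ — this term is bounded, for any $\gamma>0$, by $\mathfrak{c}_\gamma\,\mathbb{E}[\sup_s|e^{\frac{q}{2}\beta A_s}L_s^+|^p]+\gamma^p\,\mathbb{E}[|K_T|^p]$. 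This is precisely where the obstacle weight $e^{\frac{q}{2}\beta A}$ of $(\mathcal{H}3)$-(ii) originates, and it is also the source of the main difficulty: the estimate now couples back to $\mathbb{E}[|K_T|^p]$, which is itself one of the quantities to be bounded.

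To close this coupling I would, as in Proposition~\ref{Estimation p less stricly than 2}, first derive the bounds for $\mathbb{E}[\sup_t e^{\frac{p}{2}\beta A_t}|Y_t|^p]$ (handling the two martingales via Lemma~\ref{rem1}, the Burkholder-Davis-Gundy inequality for the continuous part and the inequality of Remark~\ref{Obse} for the jump part, exactly as in \eqref{sup for p less than 2}--\eqref{First inequality.4}) together with the $dA$-, $Z$- and $U$-integrals, and recover the clean norms $(\int e^{\beta A}|Z|^2)^{p/2}$, $(\int e^{\beta A}\|U\|_\lambda^2)^{p/2}$ and $(\int\int e^{\beta A}|U|^2\mu)^{p/2}$ by the regularization and Young arguments of \eqref{R1}, \eqref{R3} and \eqref{R2}. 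Separately, writing \eqref{basic equation}-(i) forward at $t=0$ gives $K_T=Y_0-\xi-\int_0^T f(s,Y_s,Z_s)\,ds+\int_0^T Z_s\,dB_s+\int_0^T\int_{\mathcal{U}}U_s(e)\,\tilde{\mu}(ds,de)$; since $|f(s,Y_s,Z_s)|\leq\varphi_s+\phi_s|Y_s|+\eta_s|Z_s|$ by $(\mathcal{H}2)$-(iii)--(iv), the inequality \eqref{basic inequality}, the martingale estimates above, and the integrability $\mathbb{E}[\int_0^T e^{\beta A_s}\,ds]<+\infty$ coming from $(\mathcal{H}2)$-(iv) bound $\mathbb{E}[|K_T|^p]$ by $\mathfrak{C}$ times the data together with $\mathbb{E}[\sup_t e^{\frac{p}{2}\beta A_t}|Y_t|^p]$ and the $Z,U$-norms. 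Substituting this back and collecting all five left-hand quantities plus $\mathbb{E}[|K_T|^p]$ into a single quantity $Q$, the combined inequality reads $Q\leq\mathfrak{C}_{p,\beta,\epsilon}(\text{data})+\gamma^p\mathfrak{C}\,Q$; since $(Y,Z,U,K)\in\mathcal{E}^p_\beta$ guarantees $Q<+\infty$, choosing $\gamma$ small enough (and $\beta$ large enough for the earlier absorptions) allows me to absorb the last term and conclude. The main obstacle, as indicated, is organizing this self-referential absorption — producing the obstacle term with the correct weight while simultaneously controlling $\mathbb{E}[|K_T|^p]$ — rather than any single estimate taken in isolation.
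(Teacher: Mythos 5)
Your proposal is correct and follows essentially the same route as the paper: the same It\^{o}--Meyer inequality, the same Skorokhod-based bound on the reflection term with the Young split producing the weighted obstacle norm $\sup_t|e^{\frac{q}{2}\beta A_t}L_t^+|^p$ plus a small multiple of $\mathbb{E}[|K_T|^p]$ (your small-$\gamma$ absorption is the paper's large-$\varrho$ choice in \eqref{Skoro1.1} and \eqref{First inequality.5}), and the same forward rewriting of the equation with \eqref{basic inequality}, BDG, and the growth/Lipschitz bounds \eqref{K.1}--\eqref{K.3} to control $\mathbb{E}[|K_T|^p]$ before closing the self-referential estimate, which is legitimate since $(Y,Z,U,K)\in\mathcal{E}^p_\beta$ makes all quantities finite. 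Your explicit forward identity $K_T=Y_0-\xi-\int_0^T f(s,Y_s,Z_s)\,ds+\int_0^T Z_s\,dB_s+\int_0^T\int_{\mathcal{U}}U_s(e)\,\tilde{\mu}(ds,de)$ is in fact the corrected form of the paper's display, which omits the $Y_t$ term.
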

\begin{proof}
	As previously mentioned, it suffices to follow a similar argument to that used in the proof of Proposition~\ref{Estimation p less stricly than 2}. The corresponding generalized Itô's formula, resulting from the application of Corollary~\ref{cor}, Lemma~\ref{lem1}, and Remark~\ref{good rmq 1} to the semimartingale $e^{\frac{p}{2} \beta A_t} |Y_t|^p$, is given by:
	\begin{equation}\label{e01-u.1}
		\begin{split}
			&e^{\frac{p}{2}\beta A_t}|\bar Y_t|^p+\frac{p}{2}\beta\int_t^Te^{\frac{p}{2}\beta A_s}| Y_s|^pdA_s+c(p)\int_t^Te^{\frac{p}{2}\beta A_s}| Y_s|^{p-2}| Z_s|^2\mathds{1}_{\{{Y}_s\neq0\}}ds\\
			&\quad+c(p)\int_t^T\int_{\mathcal{U}}e^{\frac{p}{2}\beta A_s}| U_s(e)|^2\left(| Y_{s-}|^{2}\vee| {Y}_{s-}+{U}_s(e)|^2\right)^{\frac{p-2}{2}}\mathds{1}_{\{| Y_{s-}|\vee| Y_{s}+{U}_s(e)|\neq 0\}}\mu(ds,de)\\
			&\leq e^{\frac{p}{2}\beta A_T}| \xi|^p+p\int_t^Te^{\frac{p}{2}\beta A_s}| Y_{s}|^{p-1}\hat{ Y}_{s}f(s,Y_s,Z_s)ds+p\int_t^Te^{\frac{p}{2}\beta A_s}| Y_{s-}|^{p-1}\hat{ Y}_{s-}d K_s\\
			&\quad-p\int_t^Te^{\frac{p}{2}\beta A_s}| Y_s|^{p-1}\hat{ Y}_s Z_sdB_s
			-p\int_t^T\int_{\mathcal{U}}e^{\frac{p}{2}\beta A_s}| Y_{s-}|^{p-1}\hat{ Y}_{s-} U_s(e)\tilde{\mu}(ds,de).
		\end{split}
	\end{equation}
	For the estimation related to the driver $f$, the argument follows the same lines as in \eqref{generator p strictly less than 2} and \eqref{tired 1}. However, the main difference lies in the use of the Skorokhod condition \eqref{basic equation}-(iii), as emphasized in \eqref{Skoro1}. In this context, we employ the following estimate, valid for any $\varrho > 0$:
	\begin{equation}\label{Skoro1.1}
		\begin{split}
			\int_t^T e^{\frac{p}{2} \beta A_s} |{Y}_{s-}|^{p-1} \hat{{Y}}_{s-} \, d{K}_s 
			&= \int_t^T e^{\frac{p}{2} \beta A_s} |{L}_{s-}|^{p-1} \hat{{L}}_{s-} \, d{K}_s \\
			&\leq \big( \sup_{0 \leq t \leq T} e^{\frac{p}{2} \beta A_t}({L}^+_{t})^{p-1}\big)  \left(K_T-K_t\right)\\
			&\leq \left(\frac{\varrho}{p}\right)^{\frac{q}{p}}\frac{p-1}{p}\big( \sup_{0 \leq t \leq T} e^{\frac{q p}{2} \beta A_t}({L}^+_{t})^{p}\big)  +\frac{1}{ \varrho}\left(K_T-K_t\right)^p.
		\end{split}
	\end{equation}
	To estimate the term $\left(K_T - K_t\right)^p$, we return to the BSDE \eqref{basic equation}-(i), which yields
	$$
	K_T-K_t=\xi-\int_{t}^{T}f(s,Y_s,Z_s)ds+\int_{t}^{T}Z_s dB_s+\int_{t}^{T}\int_{\mathcal{U}} U_s(e)\tilde{\mu}(ds,de),\quad t \in [0,T].
	$$
	Using the basic inequality \eqref{basic inequality}, we obtain
	\begin{equation}\label{Inequation for K}
		\left(K_T-K_t\right)^p \leq 4^{p-1}\left(\left|\xi\right|^p+\left(\int_{t}^{T}|f(s,Y_s,Z_s)|ds\right)^p+\left|\int_{t}^{T}Z_s dB_s\right|^p+\left|\int_{t}^{T}\int_{\mathcal{U}} U_s(e)\tilde{\mu}(ds,de)\right|^p\right).
	\end{equation}
	For the last two martingale terms, since $p > 1$, we can apply the BDG inequality to obtain
	\begin{equation}\label{K.1}
		\begin{split}
			\mathbb{E}\left[\sup_{0 \leq t \leq T } \left|\int_{t}^{T}Z_s dB_s\right|^p\right] \leq \mathfrak{c} \mathbb{E}\left[\left(\int_{0}^{T}|Z_s|^2ds\right)^{\frac{p}{2}}\right]\leq \mathfrak{c} \mathbb{E}\left[\left(\int_{0}^{T}e^{\beta A_s}|Z_s|^2ds\right)^{\frac{p}{2}}\right]
		\end{split}
	\end{equation}
	and similarly, we have
	\begin{equation}\label{K.2}
		\begin{split}
			\mathbb{E}\left[\sup_{0 \leq t \leq T } \left|\int_{t}^{T}\int_{\mathcal{U}} U_s(e)\tilde{\mu}(ds,de)\right|^p\right] \leq \mathfrak{c} \mathbb{E}\left[\left(\int_{0}^{T}e^{\beta A_s}\int_{\mathcal{U}}|U_s(e)|^2 {\mu}(ds,de)\right)^{\frac{p}{2}}\right].
		\end{split}
	\end{equation}
	For the driver term, we apply assumptions $(\mathcal{H}2)$-(iii)-(iv), together with Hölder's and Jensen's inequalities, which yields
	\begin{equation}\label{K.3}
		\begin{split}
			&\left( \int_{t}^{T}\left| f(s,Y_s,Z_s) \right|ds \right)^p\\
			& \leq \left( \int_{t}^{T}\left(\left| f(s,Y_s,0) \right|+\left| f(s,Y_s,Z_s)-f(s,Y_s,0) \right|\right)ds \right)^p \\
			&\leq \left(\int_{t}^{T}\left\{\varphi_s + \phi_s |Y_s| ds+\eta_s|Z_s|\right\} ds\right)^p\\
			& \leq 3^{p-1} \left(\left(\int_{t}^{T}\varphi_s ds \right)^p+\left(\int_{t}^{T}|Y_s| dA_s\right)^p+\left(\int_{t}^{T}|Z_s| ds\right)^p\right)\\
			& \leq 3^{p-1} \left(T^{p-1}\int_{t}^{T}e^{\beta A_s}\left|\varphi_s\right|^p ds +\left(\int_{t}^{T}e^{-\frac{q}{2}\beta A_s}dA_s\right)^{p-1}\left(\int_{t}^{T}e^{\frac{p}{2}\beta A_s}|Y_s|^p dA_s\right) \right.\\
			&\qquad\qquad \left.+\left(\int_{t}^{T}e^{-\frac{q}{2} \beta A_s}\eta^q_s ds\right)^{p-1}\left(\int_{t}^{T}e^{\frac{p}{2}\beta A_s} |Z_s|^p ds\right)\right) \\
			& \leq 3^{p-1}\left(T^{p-1}\int_{t}^{T}e^{\beta A_s}\left|\varphi_s\right|^p ds+\left(\frac{2(p-1)}{p}\right)^{p-1}\int_{t}^{T}e^{\frac{p}{2}\beta A_s}|Y_s|^p dA_s \right.\\
			&\qquad\qquad\left.+\left(\frac{2(p-1)}{p}\right)^{p-1}\left(\int_{t}^{T}e^{\frac{p}{2}\beta A_s} |Z_s|^p ds\right)\right)\\
			& \leq 3^{p-1}\left(T^{p-1}\int_{t}^{T}e^{\beta A_s}\left|\varphi_s\right|^p ds+\left(\frac{2(p-1)}{p}\right)^{p-1}\int_{t}^{T}e^{\frac{p}{2}\beta A_s}|Y_s|^p dA_s\right.\\
			&\qquad\qquad\quad \left.+\left(T-t\right)^{\frac{2-p}{2}}\left( \frac{2(p-1)}{p}\right)^{p-1}\left(\int_{t}^{T}e^{\beta A_s} |Z_s|^2 ds\right)^{\frac{p}{2}}\right).
		\end{split}
	\end{equation}
	Now, returning to \eqref{e01-u.1}, and using Lemma~\ref{rem1} together with similar estimates as in \eqref{First inequality}, \eqref{First inequality.1}, and \eqref{Skoro1.1}, we obtain
	\begin{equation}\label{First inequality.5}
		\begin{split}
			&\mathbb{E} \left[ \int_{0}^{T} e^{\frac{p}{2} \beta A_s} |\bar{Y}_s|^p \, dA_s \right] 
			+ \mathbb{E} \left[ \int_0^T e^{\frac{p}{2} \beta A_s} |\bar{Y}_s|^{p-2} |\bar{Z}_s|^2 \mathds{1}_{\{\bar{Y}_s \neq 0\}} \, ds \right] \\
			&\quad+ \mathbb{E} \left[ \int_0^T \int_{\mathcal{U}} e^{\frac{p}{2} \beta A_s} |\bar{U}_s(e)|^2 \left( |\bar{Y}_{s-}|^2 \vee |\bar{Y}_{s-} + \bar{U}_s(e)|^2 \right)^{\frac{p-2}{2}} \mathds{1}_{\{|\bar{Y}_{s-}| \vee |\bar{Y}_{s-} + \bar{U}_s(e)| \neq 0\}} \, \mu(ds,de) \right] \\
			&\quad+ \mathbb{E} \left[ \int_0^{\tau_k} \int_{\mathcal{U}} e^{\frac{p}{2} \beta A_s} |\bar U_s(e)|^2 \left( |Y_{s-}|^2 \vee |\bar Y_{s-} + \bar U_s(e)|^2 \right)^{\frac{p-2}{2}} \mathds{1}_{\{|\bar Y_{s-}| \vee |\bar Y_{s-} + \bar U_s(e)| \neq 0\}} \, \lambda(de) \, ds \right] \\
			&\leq \mathfrak{C}_{p,\beta,\epsilon} \left( \mathbb{E} \left[ e^{\frac{p}{2} \beta A_T} |{\xi}|^p \right] 
			+ \mathbb{E} \left[ \int_0^T e^{\frac{p}{2} \beta A_s} |\varphi_s|^p \, ds\right]\right)\\
			&\quad+\mathfrak{C}_{p,\beta,\epsilon}\left(\frac{\varrho}{p}\right)^{\frac{q}{p}}\frac{p-1}{p}\mathbb{E}\left[\sup\limits_{0\leq t\leq T}\left|e^{\frac{q}{2}\beta A_t}L_t^+\right|^{p}\right]+\frac{\mathfrak{C}_{p,\beta,\epsilon}}{\varrho}\mathbb{E}\left[\left|K_T\right|^p\right].
		\end{split}
	\end{equation}
	Following the application of the BDG inequality in \eqref{First inequality.4} together with \eqref{First inequality.5}, we also deduce that the term $\|Y\|^p_{\mathcal{S}^{p}_\beta}$ is bounded by the right-hand side of \eqref{First inequality.5}, up to a suitably adjusted constant $\mathfrak{C}_{p,\beta,\epsilon}$.\\ 
	Using this result, along with a similar estimate for the control variable $Z$ established in \eqref{R2}, and applying the regularization procedure used in \eqref{R1} and \eqref{R3} for the control variable $U$, we derive that
	$$
	\mathbb{E}\left[\left(\int_{0}^{T}e^{\beta A_s} \big|{Z}_s\big|^2 ds\right)^{\frac{p}{2}}\right]+\mathbb{E}\left[\left(\int_{0}^{T} \int_{\mathcal{U}} e^{\beta A_s}\big|{U}_s(e)\big|^2 \mu(ds,de)\right)^{\frac{p}{2}}\right]+\mathbb{E}\left[\left(\int_{0}^{T}e^{\beta A_s} \big\|\bar{U}_s\big\|^2_{\mathbb{L}^2_\lambda} ds\right)^{\frac{p}{2}}\right]
	$$
	is also controlled by the right-hand side of \eqref{First inequality.5}. It then remains to take the expectation on both sides of \eqref{Inequation for K}, and to use the previously obtained estimates \eqref{K.1}, \eqref{K.2}, and \eqref{K.3}, together with the argumentation described above for the triplet $(Y, Z, U)$. \\
	Finally, by choosing $\varrho$ sufficiently large such that $\varrho > \mathfrak{C}_{p,\beta,\epsilon}$, where $\mathfrak{C}_{p,\beta,\epsilon}$ denotes the constant arising from the above considerations, we arrive at the following estimate:
	\begin{equation*}
		\begin{split}
			&\mathbb{E}\left[\sup_{0 \leq t \leq T} e^{\frac{p}{2}\beta A_{t  }}\big|{Y}_{t}\big|^p \right]+\mathbb{E}\left[ \int_{0}^{T}e^{\frac{p}{2}\beta A_s} \big|{Y}_s\big|^p dA_s\right] +\mathbb{E}\left[\left(\int_{0}^{T}e^{\beta A_s} \big|{Z}_s\big|^2 ds\right)^{\frac{p}{2}}\right]\\
			&+\mathbb{E}\left[\left(\int_{0}^{T}e^{\beta A_s} \big\|{U}_s\big\|^2_{\mathbb{L}^2_\lambda} ds\right)^{\frac{p}{2}}\right]+\mathbb{E}\left[\left(\int_{0}^{T}e^{\beta A_s}\int_{\mathcal{U}} \big|{U}_s(e)\big|^2 \mu(ds,de)\right)^{\frac{p}{2}}\right]\\
			&\leq \mathfrak{C}_{p,\beta,\epsilon} \left(\mathbb{E}\left[ e^{\frac{p}{2}\beta A_{T}}\big|{\xi}\big|^p\right]  +\mathbb{E}\left[ \int_{0}^{T}e^{\beta A_s}\left|\varphi_s\right|^pds\right]+\mathbb{E}\left[\sup\limits_{0\leq t\leq T}\left|e^{\frac{q}{2}\beta A_t}L_t^+\right|^{p}\right] \right).
		\end{split}
	\end{equation*}
	By applying the above estimate to \eqref{Inequation for K}, we obtain the result stated in Proposition~\ref{Estimation p less stricly than 2.1}, which completes the proof.
\end{proof}

Another result that follows from Proposition~\ref{Estimation p less stricly than 2} is the following uniqueness corollary:
\begin{corollary}
	Let $(\xi, f, L)$ be a set of data satisfying assumptions $(\mathcal{H}1)$--$(\mathcal{H}3)$ for a sufficiently large $\beta$, and suppose that the driver $f$ is independent of the variable $u$. Then, the RBSDEJ \eqref{basic equation} associated with $(\xi, f, L)$ admits at most one $\mathbb{L}^p$-solution for any $p \in (1,2)$.
\end{corollary}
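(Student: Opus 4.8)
The plan is to obtain uniqueness as an immediate specialization of the a priori estimate proved in Proposition~\ref{Estimation p less stricly than 2}. Suppose that $(Y^1, Z^1, U^1, K^1)$ and $(Y^2, Z^2, U^2, K^2)$ are two $\mathbb{L}^p$-solutions of the RBSDEJ \eqref{basic equation} associated with the \emph{same} data $(\xi, f, L)$, where $f$ is independent of $u$. In the notation of Proposition~\ref{Estimation p less stricly than 2}, this corresponds to the choice $\xi_1 = \xi_2 = \xi$ and $f_1 = f_2 = f$, so that the difference data satisfies $\bar{\xi} = \xi_1 - \xi_2 = 0$ and $\bar{f}(s, Y^2_s, Z^2_s) = f_1(s, Y^2_s, Z^2_s) - f_2(s, Y^2_s, Z^2_s) = 0$, $d\mathbb{P}\otimes ds$-a.e. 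Crucially, both solutions reflect on the common barrier $L$, which is precisely the setting under which Proposition~\ref{Estimation p less stricly than 2} was established.

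First I would fix any $\beta > \frac{2(p-1)}{p}$, which is the meaning of the ``sufficiently large $\beta$'' hypothesis and the exact threshold required for Proposition~\ref{Estimation p less stricly than 2} to apply. Substituting $\bar{\xi} = 0$ and $\bar{f}(\cdot, Y^2, Z^2) = 0$ into the conclusion of that proposition annihilates its entire right-hand side, so that
\begin{equation*}
\mathbb{E}\left[\sup_{0 \leq t \leq T} e^{\frac{p}{2}\beta A_t}\big|\bar{Y}_t\big|^p\right] + \mathbb{E}\left[\left(\int_0^T e^{\beta A_s}\big\|\bar{Z}_s\big\|^2 ds\right)^{\frac{p}{2}}\right] + \mathbb{E}\left[\left(\int_0^T e^{\beta A_s}\big\|\bar{U}_s\big\|^2_{\mathbb{L}^2_\lambda} ds\right)^{\frac{p}{2}}\right] \leq 0.
\end{equation*}
Since each summand is nonnegative, I conclude that all three vanish separately.

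From $\mathbb{E}\big[\sup_t e^{\frac{p}{2}\beta A_t}|\bar{Y}_t|^p\big] = 0$, together with $e^{\frac{p}{2}\beta A_t} \geq 1$ and the RCLL regularity of $\bar{Y}$, I deduce that $Y^1$ and $Y^2$ are indistinguishable. The vanishing of the $\mathcal{H}^p_\beta$- and $\mathfrak{L}^p_{\lambda,\beta}$-norms of $\bar{Z}$ and $\bar{U}$ gives $Z^1 = Z^2$, $d\mathbb{P}\otimes dt$-a.e., and $U^1 = U^2$, $d\mathbb{P}\otimes dt\otimes\lambda(de)$-a.e. It then remains to identify the reflecting processes: writing \eqref{basic equation}-(i) forwardly for each solution and subtracting, the driver term, the Brownian integral and the compensated-Poisson integral all cancel in view of the equalities just obtained, which forces $\bar{K}_T - \bar{K}_t = 0$ for every $t \in [0,T]$. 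Evaluating at $t = 0$ and using $K^1_0 = K^2_0 = 0$ yields $\bar{K} \equiv 0$, hence $K^1 = K^2$, completing the identification of all four components.

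I do not anticipate a genuine obstacle, since the entire analytic difficulty is already absorbed into the stability estimate of Proposition~\ref{Estimation p less stricly than 2}, and the corollary is a direct specialization to $\bar{\xi} = 0$, $\bar{f} = 0$. The only points demanding a little care are ensuring $\beta$ exceeds the threshold $\frac{2(p-1)}{p}$ so that the proposition may be invoked, and the passage from vanishing norms to indistinguishability or almost-everywhere equality of the individual components. In particular, the equality $K^1 = K^2$ must be recovered from the equation itself rather than from the estimate, because the process $K$ does not appear on the left-hand side of Proposition~\ref{Estimation p less stricly than 2}.
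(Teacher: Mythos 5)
Your proposal is correct and is essentially the paper's own argument: the paper states the corollary as an immediate consequence of Proposition~\ref{Estimation p less stricly than 2}, obtained exactly by taking $\bar{\xi}=0$ and $\bar{f}=0$ in the stability estimate. Your additional step recovering $K^1=K^2$ from the equation itself (rather than from the estimate, whose left-hand side does not involve $K$) is a detail the paper leaves implicit, and you handle it correctly.
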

\subsubsection{Existence}
We now aim to establish the existence and uniqueness of an $\mathbb{L}^p$-solution to the RBSDEJ \eqref{basic equation} associated with the data $(\xi, f, L)$, under the simplifying assumption that the driver $f$ is independent of the variables $(z,u)$. Our approach is inspired by the penalization method developed in the Brownian setting by \cite[Section 4]{HamadenePopier2012}, which we extend to the jump framework by incorporating additional technical challenges arising from discontinuities, stochastic monotonicity in the coefficients, and RCLL obstacles with general jumps, going beyond the classical assumptions of Lipschitz continuity and continuous barriers.

To begin, we define $\mathfrak{f}(t,y) := f(t,y,z,u)$, where the driver $\mathfrak{f}$ satisfies assumptions $(\mathcal{H}2)$-(i)-(ii) and $(\mathcal{H}2)$-(iv)-(vi). For clarity, we group these four conditions under the new label $(\mathcal{H}2')$, with the following correspondence: $(\mathcal{H}2')$-(i)-(ii) refer to $(\mathcal{H}2)$-(i)-(ii), $(\mathcal{H}2')$-(iii) refers to $(\mathcal{H}2)$-(iv), and $(\mathcal{H}2')$-(iv)-(v) correspond to $(\mathcal{H}2)$-(v)-(vi).

The main result of the current section is giving in the following Lemma:
\begin{theorem}\label{lem2}
	Assume that $(\mathcal{H}1)$, $(\mathcal{H}2')$ and $(\mathcal{H}3)$ hold. Then, the RBSDEJs associated with parameters $(\xi,\mathfrak{f},L)$ has a unique $\mathbb{L}^p$-solution for each $p\in(1,2)$.
\end{theorem}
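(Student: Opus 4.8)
The uniqueness assertion is already contained in the Corollary following Proposition~\ref{Estimation p less stricly than 2}, so my plan is to concentrate on existence, which I would obtain by the penalization method, following the Brownian scheme of \cite{HamadenePopier2012} and adapting it to the jump setting. For each integer $n \geq 1$ I would introduce the penalized generator $\mathfrak{f}_n(t,y) := \mathfrak{f}(t,y) + n(y - L_t)^-$ and consider the associated (non-reflected) BSDEJ with data $(\xi, \mathfrak{f}_n)$,
\begin{equation*}
	Y^n_t = \xi + \int_t^T \mathfrak{f}_n(s, Y^n_s)\,ds - \int_t^T Z^n_s\,dB_s - \int_t^T\!\!\int_{\mathcal{U}} U^n_s(e)\,\tilde{\mu}(ds,de), \quad t \in [0,T].
\end{equation*}
Since $y \mapsto n(y - L_t)^-$ is non-increasing, the map $\mathfrak{f}_n$ still satisfies the stochastic monotonicity condition $(\mathcal{H}2)$-(ii) with the same process $\alpha$, while the growth and integrability parts of $(\mathcal{H}2')$ are preserved because the extra contribution at $y=0$ equals $n L_t^+$, which is $p$-integrable by $(\mathcal{H}3)$-(ii). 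Hence, by the well-posedness theory for $\mathbb{L}^p$-solutions of BSDEJs with stochastic monotone generators established in \cite{elmansouri2025,badrGBSDE}, each penalized equation admits a unique solution $(Y^n, Z^n, U^n) \in \mathfrak{B}^{p}_\beta \times \mathcal{H}^{p}_\beta \times \mathfrak{L}^{p}_{\lambda,\beta}$. I would then set $K^n_t := n\int_0^t (Y^n_s - L_s)^-\,ds$, a continuous increasing $\mathbb{F}$-adapted process with $K^n_0=0$, so that $(Y^n, Z^n, U^n, K^n)$ solves \eqref{basic equation}-(i).

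Next I would derive uniform-in-$n$ a priori estimates by adapting verbatim the computation of Proposition~\ref{Estimation p less stricly than 2.1}, now viewing $K^n$ as the reflecting process: applying Corollary~\ref{cor} and Lemma~\ref{lem1} to $e^{\frac{p}{2}\beta A_t}|Y^n_t|^p$, exploiting the monotonicity of $\mathfrak{f}$, and bounding the barrier contribution exactly as in \eqref{Skoro1.1}--\eqref{K.3}, I would obtain
\begin{equation*}
	\|Y^n\|^p_{\mathfrak{B}^{p}_\beta} + \|Z^n\|^p_{\mathcal{H}^{p}_\beta} + \|U^n\|^p_{\mathfrak{L}^{p}_{\lambda,\beta}} + \mathbb{E}\big[|K^n_T|^p\big] \leq \mathfrak{C}_{p,\beta,\epsilon},
\end{equation*}
with $\mathfrak{C}_{p,\beta,\epsilon}$ equal to the right-hand side of Proposition~\ref{Estimation p less stricly than 2.1} and, in particular, independent of $n$; the uniform control of $\mathbb{E}[|K^n_T|^p]$ is the decisive point. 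I would then establish the monotonicity $Y^n_t \leq Y^{n+1}_t$ for all $t$, $\mathbb{P}$-a.s.: since $\mathfrak{f}_n \leq \mathfrak{f}_{n+1}$ pointwise, this follows from the comparison principle for $\mathbb{L}^p$-solutions of monotone BSDEJs. Combined with the uniform $\mathcal{S}^{p}_\beta$ bound (so that $\sup_n|Y^n|$ is $p$-integrable) and dominated convergence, the sequence $(Y^n)$ increases $\mathbb{P}$-a.s. and in $\mathcal{S}^{p}_\beta$ to a limit $Y \in \mathcal{S}^{p}_\beta$.

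To pass to the limit in the remaining components, I would apply the a priori estimate of Proposition~\ref{Estimation p less stricly than 2} to the difference of two penalized solutions $(Y^n - Y^m, Z^n - Z^m, U^n - U^m)$, treating $\mathfrak{f}_n(\cdot,Y^m)$ as a perturbation, and deduce that $(Z^n)$ and $(U^n)$ are Cauchy in $\mathcal{H}^{p}_\beta$ and $\mathfrak{L}^{p}_{\lambda,\beta}$, with limits $Z$ and $U$. Here the jump term must be treated through the regularization and Lenglart-domination arguments used in \eqref{R1}--\eqref{R3}, since the classical BDG inequality is unavailable for the $\frac{p}{2}$-th moment (cf.\ Remark~\ref{Obse}). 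The increasing processes $K^n$ then converge in $\mathcal{S}^{p}$ to a predictable increasing process $K \in \mathcal{K}^{p}$, and passing to the limit in \eqref{basic equation}-(i) shows that $(Y,Z,U,K)$ satisfies the forward equation.

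The step I expect to be the main obstacle is verifying the obstacle constraint $Y \geq L$ and the Skorokhod minimality conditions \eqref{basic equation}-(iii). The uniform bound $\sup_n \mathbb{E}[|K^n_T|^p] < +\infty$ forces $\mathbb{E}\big[\int_0^T (Y^n_s - L_s)^-\,ds\big] = n^{-1}\mathbb{E}[K^n_T] \to 0$, whence $(Y_s - L_s)^- = 0$ and $Y \geq L$. The delicate part is minimality: I would establish $\int_0^T (Y_{s} - L_{s})\,dK^c_s = 0$ by a limiting argument on the flat-off property of the penalized equations, and --- most subtly --- that the purely discontinuous part $K^d$ charges only the predictable jump times inherited from the negative jumps of $L$, with $\Delta K^d_t = (Y_t - L_{t-})^-\mathds{1}_{\{Y_{t-} = L_{t-}\}}$. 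This requires carefully separating the predictable jumps of $Y$ (coming from $L$) from the totally inaccessible jumps induced by $\tilde{\mu}$, exactly in the spirit of Remark~\ref{good rmq 1}, and constitutes the technical heart of the argument; uniqueness of the resulting solution is then inherited from the Corollary to Proposition~\ref{Estimation p less stricly than 2}.
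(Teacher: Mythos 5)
Up through the construction and convergence of the penalized sequence, your outline matches the paper's proof of Theorem \ref{lem2} essentially step for step: the same penalized generator $\mathfrak{f}_n$, the same appeal to \cite[Theorem 16]{elmansouri2025} for the penalized equations \eqref{penalized}, the same uniform estimates with the crucial $n$-independent bound on $\mathbb{E}\left[|K^n_T|^p\right]$, comparison and monotone convergence of $Y^n$, Cauchy arguments for $(Z^n,U^n)$ via the Lenglart/regularization devices of Remark \ref{Obse}, and convergence of $K^n$ in $\mathcal{S}^p$. Two small points you pass over are repairable: an a.s.\ increasing limit of RCLL processes need not be RCLL, which the paper fixes with Peng's monotonic limit theorem \cite[Lemma 2.2]{peng}; and the pointwise statement $(Y-L)^-=0$ is upgraded to uniform convergence of $(Y^n-L)^-$ by a generalized Dini argument. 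I do not count these as the gap.

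The genuine gap is exactly where you placed a promissory note: the verification of \eqref{basic equation}-(iii). A ``limiting argument on the flat-off property of the penalized equations'' cannot deliver the predictable-jump identity. Each $K^n_t = n\int_0^t (Y^n_s-L_s)^-\,ds$ is absolutely continuous, so $\Delta K^{n,d}\equiv 0$; the purely discontinuous part of $K$ is created only in the limit and carries no flat-off information from the prelimit, and since $Y-L$ is merely RCLL, convergence of $\int (Y^n_s-L_s)\,dK^n_s$ to $\int (Y_s-L_s)\,dK_s$ fails precisely at the atoms of $dK$, i.e.\ at the predictable jump times of interest. Worse, the conclusion such an argument aims at, namely $\int_0^T (Y_s-L_s)\,dK_s \leq 0$ and hence $(Y_t-L_t)\Delta K_t=0$, is in general \emph{false} for the true solution: at a predictable time where $L$ jumps down one can have $Y_{t-}=L_{t-}$, $\Delta K^d_t=(Y_t-L_{t-})^->0$ and yet $Y_t>L_t$, so the flat-off relation simply is not the right form of minimality for the discontinuous part, and in any case $(Y_t-L_t)\Delta K_t=0$ neither localizes the jumps on $\{Y_{t-}=L_{t-}\}$ nor identifies their size $(Y_t-L_{t-})^-$. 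The paper closes this step by an entirely different mechanism, which is the actual technical heart of its Step 4: it represents $Y_t-\mathbb{E}\left[\xi+\int_t^T \mathfrak{f}(s,Y_s)\,ds \mid \mathcal{F}_t\right]$ as the Snell envelope $\mathbf{Sn}(\pi_t)$, applies the Doob--Meyer decomposition and identifies $\mathcal{K}=K$ by uniqueness, reads off $\Delta K^d_t=(Y_t-L_{t-})^-\mathds{1}_{\{Y_{t-}=L_{t-}\}}$ from \cite[Proposition 5.2]{Ess}, and obtains $\int_0^T (Y_t-L_t)\,dK^c_t=0$ from the regularity of $\mathbf{Sn}(\pi)+K^d$ together with the optimality of $\tau_t=\inf\{s\geq t:\; K^c_s>K^c_t\}\wedge T$ \cite[Theorem 2.41]{Elk}. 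Without this Snell-envelope argument, or an equivalent substitute, your proof is incomplete at the step you yourself flagged as decisive.
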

\begin{proof}
	Let $\mathfrak{f}_n(t, y) := \mathfrak{f}(t, y) + n(y - L_t)^-$. Thanks to \cite[Theorem 16]{elmansouri2025}\footnote{For $p \in (1,2)$ and its conjugate $q = \frac{p}{p-1}$, we have $pq = \frac{p^2}{p-1} > 2$, since $p^2 - 2p + 2 > 0$ for all $p > 1$. Thus, $\mathbb{E}\big[ \int_{0}^{T} e^{\beta A_s} L_s^+ \, ds \big] \leq T \, \mathbb{E} \big[ \sup_{0 \leq t \leq T} \big| e^{\frac{q}{2} \beta A_t} L_t^+ \big|^p \big] < +\infty$. This, together with assumption $(\mathcal{H}2')$-(iii), implies that $\mathbb{E}\big[ \int_{0}^{T} e^{\beta A_s} |\mathfrak{f}_n(s, 0)|^p \, ds \big] < +\infty$.}, for each $n \in \mathbb{N}$, there exists a process $(Y^n, Z^n, U^n) \in \mathfrak{B}^{p}_\beta \times \mathcal{H}^{p}_\beta \times \mathfrak{L}^{p}_{\lambda,\beta}$ that solves the following BSDE with jumps:
	\begin{equation}\label{penalized}
		Y^n_{t}=\xi +\int_{t}^{T} \mathfrak{f}(s,Y^n_s)ds+n\int_{t}^{T}(Y^n_s-L_s)^-ds-\int_{t}^{T}Z^n_{s}dB_{s}
		-\int_t^T\int_{\mathcal{U}}U_s^n(e)\tilde{\mu}(ds,de),\quad t \in [0,T].
	\end{equation}
	Let $K^n_t:=n\int_{0}^{t}(Y^n_s-L_s)^-ds$. The proof will be divided into four steps.
	
	\textbf{Step 1:} There exists a positive constant $\mathfrak{C}_{p,\beta,\varepsilon}$ that is independent on $n$ such that
	\begin{equation*}
		\begin{split}
			&\|Y^n\|_{\mathfrak{B}^{p}_\beta}^p+|Z^n\|_{\mathcal{H}^{p}_\beta}^p+\|U^n\|_{\mathfrak{L}^{p}_{\lambda,\beta}}^p+\|U^n\|^p_{\mathfrak{L}^{p}_{\mu,\beta}}+\mathbb{E}\left[ \left|K^n_{T} \right|^p\right]\nonumber\\
			&\leq\mathfrak{C}_{p,\beta,\epsilon}\left( \mathbb{E}\left[e^{\frac{p}{2}\beta A_T}|\xi|^p\right] +\mathbb{E}\left[\int_{0}^{T}e^{\beta A_s}\left|\varphi_s\right|^pds\right] +\mathbb{E}\bigg[\sup_{0\leq t\leq T}\big|e^{\frac{q}{2}\beta A_t}L_t^+\big|^{p}\bigg]\right) .
		\end{split}
	\end{equation*}

	\begin{proof}
		By applying Corollary \ref{cor} combining with Lemma \ref{lem1}, we get
		\begin{eqnarray}\label{e01-n}
			&&e^{\frac{p}{2}\beta A_t}|Y^n_t|^p+\frac{p}{2}\beta\int_t^Te^{\frac{p}{2}\beta A_s}|Y^n_s|^pdA_s+c(p)\int_t^Te^{\frac{p}{2}\beta A_s}|Y^n_s|^{p-2}|Z^n_s|^2\mathds{1}_{\{Y^n_s\neq0\}}ds\nonumber\\
			&&+c(p)\int_t^T\int_{\mathcal{U}}e^{\frac{p}{2}\beta A_s}|V^n_s(e)|^2\left(|Y^n_{s-}|^{2}\vee|Y^n_{s-}+U_s(e)|^2\right)^{\frac{p-2}{2}}\mathds{1}_{\{|Y^n_{s-}|\vee|Y^n_{s-}+U_s(e)|\neq 0\}}\mu(ds,de)\nonumber\\
			&\leq&e^{\frac{p}{2}\beta A_T}|\xi|^p+p\int_t^Te^{\frac{p}{2}\beta A_s}|Y^n_s|^{p-1}\hat{Y}^n_s\mathfrak{f}(s,Y^n_s)ds+p\int_t^Te^{\frac{p}{2}\beta A_s}|Y^n_{s-}|^{p-1}\hat{Y}^n_{s-}dK^n_s\nonumber\\
			&&-p\int_t^Te^{\frac{p}{2}\beta A_s}|Y^n_s|^{p-1}\hat{Y}^n_sZ^n_sdB_s-p\int_t^T\int_{\mathcal{U}}e^{\frac{p}{2}\beta A_s}|Y^n_{s-}|^{p-1}\hat{Y}^n_{s-}U^n_s(e)\tilde{\mu}(ds,de).
		\end{eqnarray}
		Using assumption $(\mathcal{H}2')$-(ii) and Remark \ref{rmq essential}, we get
		$$
		|Y^n_s|^{p-1}\hat{Y}^n_s\mathfrak{f}(s,Y^n_s) \leq |Y^n_s|^{p-1}\mathfrak{f}(s,0).
		$$
		Then by applying Young's inequality (i.e. $ab\leq\frac{p-1}{p}a^\frac{p}{p-1}+\frac{1}{p}b^p$), the H\"{o}lder's inequality (i.e. $\int|hg|\leq \left(\int|h|^\frac{2}{2-p}\right)^\frac{2-p}{2}\left(\int|g|^\frac{2}{p}\right)^\frac{p}{2}$) and condition $(\mathcal{H}2')$-(iv), we have 
		\begin{eqnarray*}
			p\int_t^Te^{\frac{p}{2}\beta A_s}|Y^n_s|^{p-1}\mathfrak{f}(s,0)ds
			&=&p\int_t^T\left(e^{\frac{p-1}{2}\beta A_s}|\zeta_s|^{\frac{2(p-1)}{p}}|Y^n_s|^{p-1}\right)\left(e^{\frac{\beta}{2}A_s}|\zeta_s|^{\frac{2-p}{p}}\left|\frac{\mathfrak{f}(s,0)}{\zeta_s}\right|\right)ds\\
			&\leq&(p-1)\int_t^Te^{\frac{p}{2}\beta A_s}|Y^n_s|^{p}dA_s+\int_t^Te^{\frac{p}{2}\beta A_s}|\zeta_s|^{2-p}\left|\frac{\mathfrak{f}(s,0)}{\zeta_s}\right|^pds\\
			&\leq&(p-1)\int_t^Te^{\frac{p}{2}\beta A_s}|Y^n_s|^{p}dA_s+\frac{1}{\epsilon^{\frac{q(p-1)}{2}}}\int_t^Te^{\frac{p}{2}\beta A_s}\left|\varphi_s\right|^p ds\nonumber\\
		\end{eqnarray*}	
		Moreover, as the function $x \in \mathbb{R} \mapsto |x|^{p-1} \hat{x}$ is non-decreasing for any $p > 1$, it follows that
		$$
		|Y^n_{s-}|^{p-1} \hat{Y}^n_s \leq |L^+_s|^{p-1}, \quad \text{whenever } Y^n_s < L_s,
		$$
		and then
		\begin{equation*}
			\int_t^Te^{\frac{p}{2}\beta A_s}|Y^n_{s-}|^{p-1}\hat{Y}^n_s(Y^n_s-L_s)^-ds\leq\int_t^Te^{\frac{p}{2}\beta A_s}|L^+_s|^{p-1}(Y^n_s-L_s)^-ds.
		\end{equation*}
		Hence, for each $\varrho>0$
		\begin{eqnarray*}\label{eq9-n}
			\int_t^Te^{\frac{p}{2}\beta A_s}|Y^n_{s-}|^{p-1}\hat{Y}^n_{s-}dK^n_s
			&\leq&\left(\int_0^T\left(e^{\frac{p}{2}\beta A_s}|L_s^+|^{p-1}\right)^\frac{p}{p-1}dK^n_s\right)^\frac{p-1}{p}.\left|K^n_T\right|^\frac{1}{p}\nonumber\\
			&\leq&\left(\int_0^T\left|e^{\frac{q}{2}\beta A_s}L_s^+\right|^{p}dK^n_s\right)^\frac{p-1}{p}.\left|K^n_T\right|^\frac{1}{p}\nonumber\\
			&\leq&\left(\sup_{0\leq t\leq T}\left|e^{\frac{q}{2}\beta A_t}L_t^+\right|^{p}\right)^\frac{p-1}{p}.\left|K^n_T\right|\nonumber\\
			&\leq&\left(\frac{\varrho}{p}\right)^ {\frac{q}{p}} \frac{p-1}{p}\left(\sup_{0\leq t\leq T}\big|e^{\frac{q}{2}\beta A_t}L_t^+\big|^{p}\right)+\frac{1}{\varrho}\left|K^n_T\right|^p,
		\end{eqnarray*}
		where we have used again H\"{o}lder's and Young's inequalities. \\
		The rest of the proof is derived by following similar arguments as the one employed in the proof of Propositions \ref{Estimation p less stricly than 2} and \ref{Estimation p less stricly than 2.1}.
	\end{proof}
	
	\textbf{Step 2:} There exists an RCLL process $Y$ such that $Y\geq L$ and $$
	\mathbb{E}\left[\sup\limits_{0\leq t\leq T}e^{\frac{p}{2}\beta A_t}|(Y^n_t-L_t)^-|^p\right]\xrightarrow[n\to +\infty]{}0.
	$$
	
	\begin{proof}
		For each $n\in\mathbb{N}$ we recall that
		\begin{equation}\label{e000-n}
			Y^n_{t}=\xi +\int_{t}^{T}\mathfrak{f}(s,Y^n_s)ds+K^n_T-K^n_t-\int_{t}^{T}Z^n_{s}dB_{s}-\int_t^T\int_{\mathcal{U}}V_s^n(e)\tilde{\mu}(ds,de).
		\end{equation}
		First, by the comparison theorem, we have $Y^n \leq Y^{n+1}$ for all $n \in \mathbb{N}$. In this case, the comparison result holds in the standard sense (see, e.g., \cite[Section 5]{Kruse18052016}), since the driver of the BSDE \eqref{e000-n} does not depend on the $u$-component. Therefore, there exists an $\mathbb{F}$-optional process $Y$ such that, $\mathbb{P}$-a.s., for every $t \in [0,T]$, we have $Y_t := \lim_{n \rightarrow +\infty} Y^n_t$. Using this convergence, together with the continuity of the mapping $\mathfrak{f}$ with respect to the $y$-variable (assumption $(\mathcal{H}2')$-(v)) and Remark~\ref{rmq essential}, we obtain that $\mathfrak{f}(t, Y^n_t) \searrow \mathfrak{f}(t, Y_t)$, $\mathbb{P}$-a.s., for every $t \in [0,T]$. By applying the monotone convergence theorem, we deduce that $\lim_{n \rightarrow +\infty} \int_0^t \mathfrak{f}^+(s, Y^n_s)\, ds = \int_0^t \mathfrak{f}^+(s, Y_s)\, ds$, and similarly, $\lim_{n \rightarrow +\infty} \int_0^t \mathfrak{f}^-(s, Y^n_s)\, ds = \int_0^t \mathfrak{f}^-(s, Y_s)\, ds$, so that $\lim_{n \rightarrow +\infty} \int_0^t \mathfrak{f}(s, Y^n_s)\, ds = \int_0^t \mathfrak{f}(s, Y_s)\, ds$, $\mathbb{P}$-a.s., for all $t \in [0,T]$. Alternatively, by exploiting the convergence $\mathfrak{f}(t, Y^n_t) \rightarrow \mathfrak{f}(t, Y_t)$, the linear growth of $\mathfrak{f}$ given by assumption $(\mathcal{H}2')$-(iii), the basic inequality \eqref{basic inequality}, the uniform estimate for the sequence $\{Y^n\}_{n \geq 0}$ established in \textbf{Step 1}, and the Lebesgue dominated convergence theorem, one may also obtain this convergence for the sequence $\{\int_{0}^{t}\mathfrak{f}(s, Y^n_s) ds\}_{n \geq 0}$ uniformity in $t$ in $\mathbb{L}^p$.
		
		By Fatou's lemma and \textbf{Step 1}, we have
		\begin{equation*}
			\begin{split}
				&\mathbb{E}\left[\sup_{0\leq t\leq T}e^{\frac{p}{2}\beta A_t}|Y_{t}|^{p}\right]+\mathbb{E}\left[\int_0^Te^{\frac{p}{2}\beta A_s}|Y^n_s|^pdA_s\right]\\
				&\leq\liminf_{n\rightarrow +\infty}\left\{\mathbb{E}\left[\sup_{0\leq t\leq T}e^{\frac{p}{2}\beta A_t}|Y_{t}^{n}|^{p}\right]+\mathbb{E}\left[\int_0^Te^{\frac{p}{2}\beta A_s}|Y^n_s|^pdA_s\right]\right\}<+\infty .
			\end{split}
		\end{equation*}
		Then $Y \in \mathfrak{B}^{p}_\beta$.  Next, from (\ref{e000-n}), we have
		\begin{equation*}
			\mathbb{E}[Y^n_{0}]=\mathbb{E}\left[\xi +\int_{0}^{T}\mathfrak{f}(s,Y_s)ds\right]+ \mathbb{E}\left[n\int_{0}^{T}(Y^n_{s}-L_{s})^-ds\right].
		\end{equation*}
		By passing to the limit as $n \to +\infty$, we obtain
		$
		\mathbb{E}\left[\int_0^T (Y_s - L_s)^- \, ds\right] = 0,
		$ which implies that $(Y_t - L_t)^- = 0$ for all $t \in [0, T)$, and hence $Y_t \geq L_t$ on $[0,T)$, since the process $Y - L$ is RCLL. Moreover, by assumption $(\mathcal{H}3)$-(i), we have $L_T \leq \xi$, so it follows that $Y_t \geq L_t$ for all $t \in [0,T]$, $\mathbb{P}$-a.s. In addition, we observe that $e^{\frac{p}{2}\beta A_t}(Y^n_t - L_t)^- \searrow 0$ for all $t \in [0,T]$, $\mathbb{P}$-a.s. This convergence also extends to the left limits, due to the fact that the jumps of $Y^n$ and $L$ occur at totally inaccessible times driven by the Poisson random measure $\mu$. Thus, from $\mathbb{E}\left[\int_0^T (Y_s - L_s)^- \, ds\right] = 0$, we also have $(Y_{t-} - L_{t-})^- = 0$ for all $t \in [0,T]$ (see also \cite[Proposition 4.1]{hamadene2016reflected}). Consequently, by applying a generalized version of Dini's lemma (see page 202 in \cite{DellacherieMeyer1980}), we deduce that $
		\sup_{0 \leq t \leq T} e^{\frac{p}{2}\beta A_t}(Y^n_t - L_t)^- \searrow 0 \quad \text{a.s.}$ Finally, since $|(Y^n_t - L_t)^-|^p \leq 2^{p-1}(|Y^0_t|^p + |L^+_t|^p)$ by inequality \eqref{basic inequality}, the uniform estimate from \textbf{Step 1}, together with the Lebesgue dominated convergence theorem, implies that
		$$
		\mathbb{E}\left[\sup_{0\leq t\leq T}e^{\frac{p}{2}\beta A_t}|(Y^n_t-L_t)^-|^p\right]\xrightarrow[n\to +\infty]{}0\quad \mbox{a.s}.
		$$
	\end{proof}
	
	\textbf{Step 3:} There exists an $\mathbb{F}$-adapted process $(Z,U,K)$ such that with the limiting process $Y$ satisfies the following convergence result:
	\begin{eqnarray*}
		&&\|Y^{n}-Y\|_{\mathcal{S}^p}^{p}+\|Z^{n}-Z\|_{\mathcal{H}^p}^{p}+\|U^n-U\|_{\mathfrak{L}^p_{\lambda}}^p
		+\|K^{n}-K\|_{\mathcal{S}^p}^{p}\xrightarrow[n\to +\infty]{}0.
	\end{eqnarray*}	
	
	\begin{proof}
		Let us put $\Re^{n,m}=\Re^n-\Re^m$ for each $n\geq m\geq0$ and for $\Re\in\{Y,Z,U,K\}$. The Corollary \ref{cor} combining with Lemma \ref{lem1} implies
		\begin{equation}\label{eq6}
			\begin{split}
				&|Y^{n,m}_t|^p+c(p)\int_t^Te^{\frac{p}{2}\beta A_s}|Y^{n,m}_s|^{p-2}|Z^{n,m}_s|^2\mathds{1}_{\{Y^n_s\neq Y^m_s\}}ds\\
				&+c(p)\int_t^T\int_{\mathcal{U}}|U^{n,m}_s(e)|^2\left(|Y^{n,m}_{s-}|^{2}\vee|Y^{n,m}_{s-}+U^{n,m}_s(e)|^2\right)^{\frac{p-2}{2}}\mathds{1}_{\{|Y^{n,m}_{s-}|\vee|Y^{n,m}_{s-}+U^{n,m}_s(e)|\neq 0\}}\mu(ds,de)\\
				&\leq p\int_t^T|Y^{n,m}_s|^{p-1}\hat{Y}^{n,m}_s\left(\mathfrak{f}(s,Y^n_s)-\mathfrak{f}(s,Y^m_s)\right)ds+p\int_t^T|Y^{n,m}_{s-}|^{p-1}\hat{Y}^{n,m}_{s-}dK^{n,m}_s\\
				&-p\int_t^T|Y^{n,m}_{s}|^{p-1}\hat{Y}^{n,m}Z^{n,m}_{s}dB_s
				-p\int_t^T\int_{\mathcal{U}}|Y^{n,m}_{s-}|^{p-1}\hat{Y}^{n,m}_{s-}U^{n,m}_{s}(e)\tilde{\mu}(ds,de).
			\end{split}
		\end{equation}
		By using Remark \ref{rmq essential} for the generator $\mathfrak{f}$ with assumption $(\mathcal{H}2')$-(ii), Lemma \ref{rem1} for the martingale part in \eqref{eq6} and taking expectation, we get
		\begin{equation}\label{CV}
			\begin{split}
				&\mathbb{E}\left[ |Y^{n,m}_t|^p\right] +c(p)\mathbb{E}\left[ \int_t^Te^{\frac{p}{2}\beta A_s}|Y^{n,m}_s|^{p-2}|Z^{n,m}_s|^2\mathds{1}_{\{Y^n_s\neq Y^m_s\}}ds\right] \\
				&+c(p)\mathbb{E}\left[ \int_t^T\int_{\mathcal{U}}|U^{n,m}_s(e)|^2\left(|Y^{n,m}_{s-}|^{2}\vee|Y^{n,m}_{s-}+U^{n,m}_s(e)|^2\right)^{\frac{p-2}{2}}\mathds{1}_{\{|Y^{n,m}_{s-}|\vee|Y^{n,m}_{s-}+U^{n,m}_s(e)|\neq 0\}}\mu(ds,de)\right] \\
				&\leq  p\mathbb{E}\left[ \int_t^T|Y^{n,m}_{s-}|^{p-1}dK^{n,m}_s\right] \\
				&\leq p\left(\mathbb{E}\left[ \sup_{0\leq t\leq T}|(Y^m_t-L_t)^-|^{p}\right] \right)^\frac{p-1}{p}\left(\mathbb{E}\left[ |K^n_T|^p\right] \right)^\frac{1}{p}+p\left(\mathbb{E}\left[ \sup_{0\leq t\leq T}|(Y^n_t-L_t)^-|^{p}\right] \right)^\frac{p-1}{p}\left(\mathbb{E}\left[ |K^m_T|^p\right] \right)^\frac{1}{p}\\
				&\xrightarrow[n,m\to +\infty]{}0.
			\end{split}
		\end{equation}
		On the other hand, by Burkholder-Davis-Gundy's inequality and following similar computations as in the proof of Proposition \ref{Estimation p less stricly than 2}, we can easily derive the existence of two non-negative constants $c$ and $\rho$ such that
		\begin{equation*}
			\begin{split}
				&\mathbb{E}\left[ \sup_{0\leq t\leq T}\left|\int_0^t|Y^{n,m}_s|^{p-1}\hat{Y}^{n,m}_sZ^{n,m}_sdB_s\right|\right] \\
				&\leq c\mathbb{E}\left[ \left(\sup_{0\leq t\leq T}|Y^{n,m}_{t}|^{p}\int_0^Te^{\frac{p}{2}\beta A_s}|Y^{n,m}_s|^{p-2}\mathds{1}_{\{Y^n_{s}\neq Y^m_{s}\}}|Z^{n,m}_s|^2ds\right)^{\frac{1}{2}}\right] \\
				&\leq\frac{1}{\rho}\mathbb{E}\left[\sup_{0\leq t\leq T}|Y^{n,m}_{t}|^{p}\right]+\rho c^2\mathbb{E}\left[ \int_0^T|Y^{n,m}_s|^{p-2}\mathds{1}_{\{Y^n_{s}\neq Y^m_{s}\}}|Z^{n,m}_s|^2ds\right] 
			\end{split}
		\end{equation*}
		and similarly, we found
		\begin{equation*}
			\begin{split}
				&\mathbb{E}\left[ \sup_{0\leq t\leq T}\left|\int_0^t\int_{\mathcal{U}}|Y^{n,m}_{s-}|^{p-1}\hat{Y}^{n,m}_sU^{n,m}_s(e)\tilde{\mu}(ds,de)\right|\right] \\
				&\leq\frac{1}{\rho}\mathbb{E}\left[\sup_{0\leq t\leq T}e^{\frac{p}{2}\beta A_t}|Y^{n,m}_{t}|^{p}\right]\\
				&\quad+\rho c^2\mathbb{E}\left[ \int_0^T\int_{\mathcal{U}}\left(|Y^{n,m}_{s-}|^{2}\vee|Y^{n,m}_{s-}+U^{n,m}_s(e)|^2\right)^{\frac{p-2}{2}}\mathds{1}_{\{|Y^{n,m}_{s-}|\vee|Y^{n,m}_{s-}+|U^{n,m}_s(e)|\neq 0\}}|U^{n,m}_s(e)|^2\mu(ds,de)\right] .
			\end{split}
		\end{equation*}
		Then, after taking the supremum in (\ref{eq6}) and then the expectation, we get for $\rho>2$
		\begin{equation}\label{eq7}
			\mathbb{E}\left[\sup_{0\leq t\leq T}|Y_{t}^{n}-Y^m_{t}|^p\right]\xrightarrow[n,m\to +\infty]{}0.
		\end{equation}
		
		It follows that $\{Y^n\}_{n\geq0}$ is a Cauchy sequence in $\mathcal{S}^{p}$. Since $Y^m\nearrow Y$ then $\mathbb{E}\big[\sup\limits_{0\leq t\leq T}|Y_{t}^{n}-Y_{t}|^p\big]\xrightarrow[n\to +\infty]{}0$ and $Y\in\mathfrak{B}^{p}_\beta$ (from \textbf{Step 1}). 
		
		Thank again to the argumentation used in Proposition \ref{Estimation p less stricly than 2}, wihc ny reporforming those, we get
		\begin{eqnarray*}
			&&\mathbb{E}\left[ \left(\int_t^T|Z_{s}^{n,m}|^2ds\right)^\frac{p}{2}\right] +\mathbb{E}\left[ \left(\int_t^T\|U^{n,m}_s\|^2_\lambda ds\right)^\frac{p}{2}\right] +\mathbb{E}\left[ \left(\int_0^{T}\int_{\mathcal{U}}|U^{n,m}_s(e)|^2\mu(ds,de)\right)^{\frac{p}{2}}\right] \nonumber\\
			&\leq&\mathfrak{C}_{p,\beta,\epsilon} \left( \mathbb{E}\left[\sup_{0\leq t\leq T}|Y_{t}^{n,m}|^{p}\right]+\mathbb{E}\int_0^{T}|Y_{s}^{n,m}|^{p-2}|Z_{s}^{n,m}|^2\mathds{1}_{\{Y^n_s\neq Y^m_s\}}ds\right. \nonumber\\
			&&\left.\quad +\mathbb{E}\int_0^T\int_{\mathcal{U}}\left(|Y^{n,m}_{s-}|^{2}\vee|Y^{n,m}_{s-}+U^{n,m}_s(e)|^2\right)^{\frac{p-2}{2}}\mathds{1}_{\{|Y^{n,m}_{s-}|\vee|Y^{n,m}_{s-}+U^{n,m}_s(e)|\neq 0\}}|U^{n,m}_s(e)|^2\mu(ds,de)\right) .
		\end{eqnarray*}
		Then, from (\ref{CV}) and (\ref{eq7}) we get
		\begin{equation*}
			\begin{split}
				&\mathbb{E}\left[ \left(\int_0^T|Z_{s}^{n,m}|^2ds\right)^\frac{p}{2}\right] +\mathbb{E}\left[ \left(\int_0^T\|V^{n,m}_s\|^2_\lambda ds\right)^\frac{p}{2}\right] +\mathbb{E}\left[ \left(\int_0^{T}\int_{\mathcal{U}}|U^{n,m}_s(e)|^2\mu(ds,de)\right)^{\frac{p}{2}}\right] \\
				&\xrightarrow[n,m\to +\infty]{}0.
			\end{split}
		\end{equation*}
		Then, $\{Z^n,U^n\}_{n\geq0}$ is a Cauchy sequence of processes in $\mathcal{H}^p\times\mathfrak{L}^p_{\lambda}$ and then there exists a pair of processes $(Z,V)$ such that the sequences $\{Z^n\}_{n\geq0}$ and $\{U^n\}_{n\geq0}$ converge toward $Z\in\mathcal{H}^p$ and $U\in\mathfrak{L}^p_\lambda$ respectively.
		To conclude, from (\ref{penalized}), we have
		$$K^n_t=Y^n_t-Y^n_0+\int_0^t \mathfrak{f}(s, Y^n_s)ds ds-\int_0^tZ^n_sdB_s-\int_0^t\int_{\mathcal{U}}V^n_s(e)\tilde{\mu}(ds,de).$$
		Then
		$$
		\mathbb{E}\left[\sup_{0\leq t\leq T}|K_{t}^{n}-K_{t}^m|^{p}\right]\xrightarrow[n,m\to +\infty]{}0.
		$$
		It follows that $\{K^n\}_{n\geq0}$ is a Cauchy sequence in $\mathcal{S}^p$. Therefore, there exists an $\mathbb{F}$-predictable ($K$ is equal to its dual predictable projection) process $K\in\mathcal{K}^p$  and
		$$\mathbb{E}\left[\sup_{0\leq t\leq T}|K_{t}^{n}-K_{t}|^{p}\right]\xrightarrow[n\to +\infty]{}0.
		$$
		
		Finally, since the sequence $\{Y^n\}_{n \geq 0}$ is increasing and consists of RCLL processes, and the sequence $\{K^n\}_{n \geq 0}$ is non-decreasing and predictable with $K_0^n = 0$, the limit process $(K_t)_{t \leq T}$ is also a non-decreasing predictable process with $K_0 = 0$. Therefore, by the monotonic limit theorem due to \cite[Lemma 2.2]{peng}, the processes $Y$ and $K$ are RCLL. Furthermore, from \textbf{Step 1}, we also deduce that $(Z, U) \in \mathcal{H}^p_\beta \times \mathfrak{L}^p_{\lambda, \beta}$.
	\end{proof}		
	
	\textbf{Step 4:} The limiting process $(Y_t,Z_t,U_t,K_t)_{t\leq T}$ is the solution of the RBSDEJs \eqref{basic equation} associated with $(\xi,\mathfrak{f},L)$. 
	
	\begin{proof}
		By passing to the limits in the approximate BSDEJs (\ref{penalized}), we obtain that the quadruple of processes $(Y,Z,U,K)$ satisfy the following discontinuous BSDE:
		$$Y_t=\xi+\int_t^T\mathfrak{f}(s,Y_s)ds+K_T-K_t-\int_t^TZ_sdB_s-\int_t^T\int_{\mathcal{U}}U_s(e)\tilde{\mu}(ds,de)\quad \forall t \in [0,T].$$
		Recall that in \textbf{Step 2}, we had proved that $Y_t\geq L_t$ a.s. $\forall t \in [0,T]$.\\
		To conclude, it remains to show the Skorokhod's condition. Indeed, set $L_{t}^{\xi}=L_{t}\mathds{1}_{\{t<T\}}+\xi\mathds{1}_{\{t=T\}}$. By using the notion of Snell envelopes ${\bf Sn}(.)$, we know that
		$$Y_{t}-\mathbb{E}\left[\xi +\int_{t}^{T}\mathfrak{f}(s,Y_s)ds|\mathcal{F}_{t}\right]={\bf Sn}(\pi_t),\quad\mbox{ where }\quad\pi_t=L_{t}^{\xi}-\mathbb{E}\left[\xi+\int_{t}^{T}\mathfrak{f}(s,Y_s)ds|\mathcal{F}_{t}\right],\quad\pi_T=0 .$$
		Since ${\bf Sn}(\pi)$ is a strong supermartingale of class (D), with the Doob-Meyer decomposition theorem (see Theorem 8 in \cite[p. 111]{Protter2005}), there exists a unique uniformly integrable $\mathbb{F}$-martingale $M$ and a unique $\mathbb{F}$-adapted RCLL increasing processes $\mathcal{K}$ with $\mathbb{E}[\mathcal{K}_T]<+\infty$ and $\mathcal{K}_0=0$ such that ${\bf Sn}(\pi_t)=M_t-\mathcal{K}_t$. We stress that the uniqueness of solution implies that $\mathcal{K}=K$. Further, from Proposition 5.2 in \cite{Ess}, we have for any $t \in [0,T]$
		$$\Delta K_t^d=({\bf Sn}(\pi_{t})-\pi_{t-})^-\mathds{1}_{\{({\bf Sn}(\pi_{t-})=\pi_{t-})\}}=(Y_t-L_{t-})^-\mathds{1}_{\{Y_{t-}=L_{t-}\}}.$$
		On the other hand, remark that the process $\left({\bf Sn}(\pi_t)+K^d_t=M_t-K^c_t\right)_{t\leq T}$ is the Snell envelope of $\left(\pi_t+K^d_t\right)_{t\leq T}$ which is regular (i.e, ${}^p({\bf Sn}(\pi_t)+K^d_t)={\bf Sn}(\pi_{t-})+K^d_{t-}$). Then, for any $t\in [0,T]$, the stopping time $\tau_t:=\inf\{s\geq t,\;\; K_s^c>K_t^c\}\wedge T$ is optimal in the class of stopping times (see Theorem 2.41 page 140 in \cite{Elk}),
		therefore, we have
		$$\int_{t}^{\tau_t}({\bf Sn}(\pi_{t})+K^d_t-(\pi_{t}+K^d_t))dK^c_{t}=\int_{t}^{\tau_t}(Y_{t}-L_{t})dK^c_{t}=0.$$
		Consequently, $\int_{0}^{T}(Y_{t}-L_{t})dK^c_{t}=0$.	
	\end{proof}

	Theorem \ref{lem2} is then proved.
	
\end{proof}

From Theorem \ref{lem2}, we may derive the following Corollary:
\begin{corollary}\label{help}
	Assume that $(\mathcal{H}1)$--$(\mathcal{H}3)$ hold for a sufficient large $\beta$. If $(z,u)\in\mathcal{H}^p_\beta\times\mathfrak{L}^p_{\lambda,\beta}$ for any $p\in(1,2)$, then there exists a unique processes $(Y,Z,U,K)\in\mathcal{E}^p_\beta$ solution to the following RBSDEJs:
	\begin{eqnarray}\label{helpbsde}
		&&\hspace{-1cm} \displaystyle\left\{
		\begin{array}{ll}
			Y_{t}=\xi +\displaystyle\int_{t}^{T} f(s,Y_{s},z_{s},u_s)ds+(K_{T}-K_{t})-\displaystyle\int_{t}^{T}Z_{s}dB_{s}-\int_t^T\int_{\mathcal{U}}U_s(e)\tilde{\mu}(ds,de)& \hbox{}\\
			Y_t\geq L_t, \quad \forall t\leq T & \hbox{}\\
			\displaystyle\int_0^T(Y_t-L_t)dK^c_t=0  \quad \mbox{and} \quad  \Delta K_t^d=(Y_t-L_{t-})^-\mathds{1}_{\{Y_{t-}=L_{t-}\}}. &\hbox{}
		\end{array}
		\right.
	\end{eqnarray}
\end{corollary}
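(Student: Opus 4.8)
The plan is to freeze the two control processes and reduce the corollary to a single application of Theorem~\ref{lem2}. Given $(z,u)\in\mathcal{H}^p_\beta\times\mathfrak{L}^p_{\lambda,\beta}$, I would set $g(s,y):=f(s,y,z_s,u_s)$, a driver depending on $y$ alone, and verify that the data $(\xi,g,L)$ fulfils $(\mathcal{H}1)$, $(\mathcal{H}2')$ and $(\mathcal{H}3)$ for the same (large) $\beta$. Conditions $(\mathcal{H}1)$ and $(\mathcal{H}3)$ concern only $\xi$ and $L$ and are therefore inherited unchanged. Once $(\mathcal{H}2')$ is checked, Theorem~\ref{lem2} yields a unique $(Y,Z,U,K)\in\mathcal{E}^p_\beta$ solving the RBSDEJ \eqref{basic equation} with driver $g$; since $g(s,Y_s)=f(s,Y_s,z_s,u_s)$, this quadruple is exactly the solution of \eqref{helpbsde}. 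In particular the constraint $Y\ge L$, the Skorokhod conditions \eqref{basic equation}-(iii) and the RCLL regularity of $Y$ and $K$ come for free from the conclusion of Theorem~\ref{lem2}, and uniqueness is likewise immediate (alternatively, from Proposition~\ref{Estimation p less stricly than 2} applied with $\xi_1=\xi_2=\xi$ and $f_1=f_2=g$).

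The verification of $(\mathcal{H}2')$ separates into a routine part and the genuine obstacle. Measurability $(\mathcal{H}2')$-(i) holds because $z,u$ are predictable and $f$ is jointly measurable; the monotonicity $(\mathcal{H}2')$-(ii) passes to $g$ with the \emph{same} process $\alpha$, since freezing $(z,u)$ does not affect the inequality in $(\mathcal{H}2)$-(ii); the continuity $(\mathcal{H}2')$-(v) of $y\mapsto g(s,y)$ is $(\mathcal{H}2)$-(vi); and the weight processes $a,\zeta$ and the constant $\epsilon$ in $(\mathcal{H}2')$-(iv) are ambient data, unchanged by the freezing. For the growth condition $(\mathcal{H}2')$-(iii) I would combine $(\mathcal{H}2)$-(iv) with the stochastic Lipschitz bound $(\mathcal{H}2)$-(iii) to get
\begin{equation*}
	|g(s,y)|\le|f(s,y,0,0)|+\eta_s|z_s|+\delta_s\|u_s\|_\lambda\le\widetilde{\varphi}_s+\phi_s|y|,\qquad \widetilde{\varphi}_s:=\varphi_s+\eta_s|z_s|+\delta_s\|u_s\|_\lambda,
\end{equation*}
so that $\widetilde{\varphi}\ge1$ and $g$ satisfies the required linear growth with the original $\phi$.

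The hard part will be the $p$-integrability $\mathbb{E}\big[\int_0^T e^{\beta A_s}\widetilde{\varphi}_s^{\,p}\,ds\big]<+\infty$, i.e. absorbing the two frozen control terms. By \eqref{basic inequality} it suffices to bound $\mathbb{E}\big[\int_0^T e^{\beta A_s}\varphi_s^p\,ds\big]$ (finite by $(\mathcal{H}2)$-(iv)) together with $\mathbb{E}\big[\int_0^T e^{\beta A_s}\eta_s^p|z_s|^p\,ds\big]$ and $\mathbb{E}\big[\int_0^T e^{\beta A_s}\delta_s^p\|u_s\|_\lambda^p\,ds\big]$. Here I would exploit the structural relations of $(\mathcal{H}2)$-(v): from $\eta_s^2\vee\delta_s^2\le a_s^2$ and $\zeta_s^2=a_s^{q}$ with $q=p/(p-1)$ one gets $\eta_s^p\vee\delta_s^p\le a_s^p=\zeta_s^{2(p-1)}$, and the lower bound $a_s^2\ge\epsilon$ (hence $\zeta_s^2\ge\epsilon^{q/2}$) lets the surplus power $\zeta_s^{2(p-1)}=\zeta_s^2\,(\zeta_s^2)^{p-2}$ be absorbed into a constant, turning $e^{\beta A_s}\eta_s^p$ into a multiple of $e^{\beta A_s}\,dA_s/ds$. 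Applying Hölder's inequality with the conjugate pair $(\tfrac2p,\tfrac2{2-p})$, while keeping the factor $\int_0^T e^{\beta A_s}|z_s|^2\,ds$ intact so as to match the $\mathcal{H}^p_\beta$-norm (and its analogue for $u$), then reduces these terms to $\|z\|^p_{\mathcal{H}^p_\beta}$ and $\|u\|^p_{\mathfrak{L}^p_{\lambda,\beta}}$. This is the delicate step, precisely because it is where the choice $\zeta_s^2=a_s^q$ and the lower bound on $a_s$ are used to reconcile the $p$-th powers appearing in the source term with the $\tfrac p2$-th power structure of $\mathcal{H}^p_\beta$ and $\mathfrak{L}^p_{\lambda,\beta}$; the remaining verifications are routine. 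With $(\mathcal{H}2')$ in hand, Theorem~\ref{lem2} completes the proof.
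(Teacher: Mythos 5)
Your reduction --- freeze $(z,u)$ and apply Theorem~\ref{lem2} once to $g(s,y):=f(s,y,z_s,u_s)$ --- breaks down at exactly the step you flag as delicate, and the failure is structural, not a matter of sharper constants. To verify $(\mathcal{H}2')$-(iii) you need $\mathbb{E}\big[\int_0^T e^{\beta A_s}\eta_s^p|z_s|^p\,ds\big]<+\infty$ knowing only $\mathbb{E}\big[\big(\int_0^T e^{\beta A_s}|z_s|^2\,ds\big)^{p/2}\big]<+\infty$. Your absorption $\eta_s^p\le a_s^p=\zeta_s^{2(p-1)}=\zeta_s^2(\zeta_s^2)^{p-2}\le \epsilon^{\frac{q(p-2)}{2}}\zeta_s^2$ is correct, but it only converts the problem into bounding $\mathbb{E}\int_0^T e^{\beta A_s}|z_s|^p\,dA_s$, and the Hölder step with exponents $(\tfrac2p,\tfrac2{2-p})$ cannot simultaneously keep the factor $\int_0^T e^{\beta A_s}|z_s|^2\,ds$ (with $ds$) and produce a finite companion. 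Keeping $ds$ in the $z$-factor forces the companion $\big(\int_0^T e^{\beta A_s}\zeta_s^{4/(2-p)}\,ds\big)^{(2-p)/2}$, and $\zeta_s^{4/(2-p)}=\zeta_s^2\,\zeta_s^{2p/(2-p)}$ carries a \emph{positive} surplus power of the unbounded process $\zeta$; the lower bound $a_s^2\ge\epsilon$ absorbs negative powers only. Keeping $dA_s$ instead yields $\big(\int_0^T e^{\beta A_s}|z_s|^2\,dA_s\big)^{p/2}\big(\int_0^T e^{\beta A_s}\,dA_s\big)^{(2-p)/2}$, whose first factor is not the $\mathcal{H}^p_\beta$-norm (wrong integrator) and whose second is $\beta^{-1}(e^{\beta A_T}-1)$ to the power $\tfrac{2-p}{2}$, for which no moment is assumed: $(\mathcal{H}2)$-(iv) with $\varphi\ge1$ gives $\mathbb{E}\int_0^T e^{\beta A_s}ds<+\infty$, not $\mathbb{E}\,e^{\kappa\beta A_T}<+\infty$. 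Since the $z$-factor is only in $L^1(\Omega)$ after the $\tfrac p2$-power, there is no room for a further Hölder in $\omega$ either. The exponent bookkeeping confirms the mismatch: reconciling $a_s^{2p/(2-p)}$ with the available $a_s^q=dA_s/ds$ requires $\tfrac{2p}{2-p}=q$, i.e.\ $p=\tfrac43$, and even then the leftover $e^{\beta A_T}$ is not integrable. Your argument is sound when $a$ is bounded (then Jensen in time alone suffices), but that collapses the stochastic-Lipschitz setting to the classical one.

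This is precisely why the paper does not check $(\mathcal{H}2')$ for the frozen driver. It truncates the $y$-free part: with $q_n(x)=\frac{xn}{|x|\vee n}$ it sets $\mathfrak{f}_n(t,y)=f(t,y,z_t,u_t)-f(t,0,z_t,u_t)+q_n(f(t,0,z_t,u_t))$ and $\xi^n=q_n(\xi)$, so $|\mathfrak{f}_n(t,0)|\le n$ makes the integrability in $(\mathcal{H}2')$-(iii) trivial and Theorem~\ref{lem2} applies for each $n$; it then shows $\{(Y^n,Z^n,U^n,K^n)\}_n$ is Cauchy in $\mathcal{E}^p_\beta$ and passes to the limit. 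The mechanism that makes $z\in\mathcal{H}^p_\beta$, $u\in\mathfrak{L}^p_{\lambda,\beta}$ sufficient is visible in \eqref{new--generator} and \eqref{Generator p less than 2}: in the stability estimate the driver increment enters \emph{normalized by $a_s$} (Young's inequality with the weight $a_s$, whose $q$-th power is exactly $dA_s/ds$), and $\big|q_n(f(s,0,z_s,u_s))-q_m(f(s,0,z_s,u_s))\big|/a_s$ is bounded by a multiple of $|z_s|+\|u_s\|_\lambda+\varphi_s/a_s$, after which Jensen in time reduces everything to $\|z\|^p_{\mathcal{H}^p_\beta}$, $\|u\|^p_{\mathfrak{L}^p_{\lambda,\beta}}$ and $\mathbb{E}\int_0^T e^{\beta A_s}\varphi_s^p\,ds$. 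If you wanted to salvage your one-shot reduction, you would first have to restate Theorem~\ref{lem2} with the normalized source condition $\mathbb{E}\int_0^T e^{\beta A_s}\big|\mathfrak{f}(s,0)/a_s\big|^p\,ds<+\infty$; as the theorem stands, your verification of its hypotheses fails, and everything downstream of it is unsupported.
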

\begin{proof}
	For every $n \geq 1$, we define
	$$
	\xi^n := q_n(\xi), \quad \mathfrak{f}_n(\omega, t, y) := f(\omega, t, y, z_t, u_t) - f(\omega, t, 0, z_t, u_t) + q_n(f(\omega, t, 0, z_t, u_t)),
	$$
	with $q_n(x) = \frac{x n}{|x| \vee n}$. Clearly, for any $y \in \mathbb{R}^d$, the process $\mathfrak{f}_k(\cdot, y)$ is progressively measurable. Moreover, from Remark \ref{rmq essential}, the driver $\mathfrak{f}_k$ satisfies, for all $t \in [0,T]$, $y, y' \in \mathbb{R}^d$, and $d\mathbb{P} \otimes dt$-a.e.,
	\begin{equation}\label{fk new mon}
		\left(y - y'\right)\left(\mathfrak{f}_k(t, y) - \mathfrak{f}_k(t, y')\right) \leq 0.
	\end{equation}
	Furthermore, it is evident that $\big|q_n(\xi)\big| \leq n$ and 
	$$
	\big|\mathfrak{f}_n(\omega, t, 0)\big|^p = \big|q_n(f(\omega, t, 0, z_t, u_t))\big|^p \leq n^p,
	$$
	which implies
	$$
	\mathbb{E}\int_{0}^{T} e^{\beta A_s} \big|\mathfrak{f}_n(t, 0)\big|^p \, ds \leq n^p \mathbb{E}\int_{0}^{T} e^{\beta A_s} \big|\varphi_s\big|^p \, ds < +\infty.
	$$
	Using Theorem \ref{lem2}, we deduce that, for each $n \geq 1$, there exists a unique triplet $(Y^n, Z^n, U^n,K^n)$ that belongs to $\mathcal{E}^p_\beta$ and solves the following BSDE:
	\begin{eqnarray}\label{Dhr}
		&&\hspace{-1cm} \displaystyle\left\{
		\begin{array}{ll}
			Y^n_{t}=\xi^{n} +\displaystyle\int_{t}^{T} \mathfrak{f}_n(s,Y^n_{s})ds+(K^n_{T}-K^n_{t})-\displaystyle\int_{t}^{T}Z^n_{s}dB_{s}-\int_t^T\int_{\mathcal{U}}U^n_s(e)\tilde{\mu}(ds,de)& ~t \in [0,T].\\
			Y^n_t\geq L_t, \quad \forall t \in [0,T] & \hbox{}\\
			\displaystyle\int_0^T(Y^n_{t-}-L_{t-})dK^{n}_t=0. 
		\end{array}
		\right.
	\end{eqnarray}
	Let us fix integers $ m >n \geq 1$ such that $k \leq m$. Let $(Y^n, Z^n, U^n, K^n)$ and $(Y^m, Z^m, U^m, K^m)$ be two $\mathbb{L}^p$-solutions of the BSDE \eqref{Dhr} associated with the parameters $(\xi^n, \mathfrak{f}_n,L)$ and $(\xi^m, \mathfrak{f}_m, L)$, respectively. Define ${\mathcal{R}}^{n,m} = \mathcal{R}^n - \mathcal{R}^m$, where $\mathcal{R} \in \{\xi, Y, Z, U\}$. Our goal is to show that the sequence $\left\{\left(Y^n, Z^n, U^n, K^n\right)\right\}_{n \geq 1}$ is a Cauchy sequence in $\mathcal{E}^p_\beta$. To this end, we first provide estimations for the generator part of the semimartingale $\big(e^{\frac{p}{2}\beta A_t} \big|{Y}^{n,m}_t\big|\big)_{t \leq T}$. Using Young's inequality, we have
	\begin{equation}\label{new--generator}
		\begin{split}
			& p e^{\frac{p}{2}\beta A_s} \big|{Y}^{n,m}_s\big|^{p-1} {{\hat{Y}}}^{n,m}_s
			\left(\mathfrak{f}_n(s, Y^n_s) - \mathfrak{f}_m(s, Y^m_s)\right) \, ds \\
			& \leq p e^{\frac{p}{2}\beta A_s} \mathds{1}_{\{\widehat{Y}^{n,m}_s \neq 0\}} \big|{Y}^{n,m}_s\big|^{p-1} \left|q_n(f(s, 0, z_s, u_s)) - q_m(f(s, 0, z_s, u_s))\right| \, ds \\
			& \leq p \left(e^{\frac{p-1}{2}\beta A_s} \big|{Y}^{n,m}_s\big|^{p-1} a_s\right) \left(e^{\frac{1}{2}\beta A_s} \frac{\left|q_n(f(s, 0, z_s, u_s)) - q_m(f(s, 0, z_s, u_s))\right|}{a_s}\right) \, ds \\
			& \leq (p-1) e^{\frac{p}{2}\beta A_s} \big|{Y}^{n,m}_s\big|^p \, dA_s + e^{\frac{p}{2}\beta A_s} \left|\frac{\left|q_n(f(s, 0, z_s, u_s)) - q_m(f(s, 0, z_s, u_s))\right|}{a_s}\right|^p \, ds.
		\end{split}
	\end{equation}
	Moreover as in the Skorokhod condition \eqref{Skoro1} this time for $Y^n$ and $Y^m$ related to $K^n$ and $K^m$, we get for any $t \in [0,T]$
	\begin{equation*}
		\begin{split}
			&\int_t^T e^{\frac{p}{2} \beta A_s} |{Y}^{n,m}_{s-}|^{p-1} \hat{{Y}}^{n,m}_{s-} \, d{K}^{n,m}_s  \leq 0.
		\end{split}
	\end{equation*}
	Then, following similar arguments as those used in Proposition \ref{Estimation p less stricly than 2}, with \eqref{new--generator} and an appropriate choice of $\beta$, we deduce the existence of a constant $ \mathfrak{C}_{p,\beta,\epsilon}$ such that
	\begin{equation}\label{UI p less than 2}
		\begin{split}
			&\mathbb{E}\left[\sup_{t \in [0,T]} e^{\frac{p}{2}\beta A_{t  }}\big|\widehat{Y}^{n,m}_{t}\big|^p \right]+\mathbb{E}\left[ \int_{0}^{T}e^{\frac{p}{2}\beta A_{s}}\big|{Y}^{n,m}_{s}\big|^p dA_s\right] +\mathbb{E}\left[\left(\int_{0}^{T}e^{\beta A_s} \big\|{Z}^{n,m}_s\big\|^2 ds\right)^{\frac{p}{2}}\right]\\
			+&\mathbb{E}\left[\left(\int_{0}^{T}e^{\beta A_s}\int_{\mathcal{U}} \big|{U}^{n,m}_s(e)\big|^2 \mu(ds,de)\right)^{\frac{p}{2}}\right]+\mathbb{E}\left[\left(\int_{0}^{T}e^{\beta A_s} \big\|{U}^{n,m}_s\big\|^2_{\mathbb{L}^2_\lambda} ds\right)^{\frac{p}{2}}\right]\\
			&\leq   \mathfrak{C}_{p,\beta,\epsilon} \left(\mathbb{E}\left[ e^{\frac{p}{2}\beta A_T}\big|{\xi}^{n,m}\big|^p\right] +\mathbb{E}\left[ \int_{0}^{T}e^{\frac{p}{2}\beta A_s} \left|\frac{\left|q_n(f(s,0,z_s,u_s))-q_m(f(s,0,z_s,u_s))\right|}{a_s}\right|^p ds\right] \right).
		\end{split}
	\end{equation}
	On the other hand, by using the basic inequality \eqref{basic inequality} and assumption $(\mathcal{H}2)$-(iii)-(iv) on $f$, we have
	\begin{equation*}
		\begin{split}
			\left|\frac{\left|q_n(f(s,0,z_s,u_s))-q_m(f(s,0,z_s,u_s))\right|}{a_s}\right|^p
			&\leq2^p\left(\left|\frac{\left|q_n(f(s,0,z_s,u_s))\right|}{a_s}\right|^p+\left|\frac{\left|q_m(f(s,0,z_s,u_s))\right|}{a_s}\right|^p\right)\\
			&\leq 2^{2p+1} \left(\left|\frac{\left|f(s,0,z_s,u_s)-f(s,0,0,0)\right|}{a_s}\right|^p+\left|\frac{\varphi_s}{a_s}\right|^p\right)\\
			& \leq \mathfrak{C}_p\left(|z_s|^p+\|u_s\|^p_{\mathbb{L}^2_\lambda}+\left|\frac{\varphi_s}{a_s}\right|^p\right)
		\end{split}
	\end{equation*}
	Then, passing to the Lebesgue-Stieltjes integral and applying Jensen's inequality, we obtain
	\begin{equation}\label{Generator p less than 2}
		\begin{split}
			&\int_{0}^{T}e^{\frac{p}{2}\beta A_s} \left|\frac{\left|q_n(f(s,0,z_s,u_s))-q_m(f(s,0,z_s,u_s))\right|}{a_s}\right|^p ds\\
			& \leq  \mathfrak{C}_{p,\beta,\epsilon,T}\left(  \left(\int_{0}^{T}e^{\beta A_s}\big\|z_s\big\|^2 ds \right)^{\frac{p}{2}}+ \left(\int_{0}^{T}e^{\beta A_s}\big\|u_s\big\|^2_{\mathbb{L}^2_Q}ds\right)^{\frac{p}{2}}+ \int_{0}^{T}e^{\beta A_s}\left|\varphi_s\right|^p ds\right) .
		\end{split}
	\end{equation}
	From the definitions of $q_n$ and $q_m$, we have $\lim\limits_{n,m \rightarrow +\infty} \left|q_n(f(s,0,z_s,u_s)) - q_m(f(s,0,z_s,u_s))\right| = 0$ a.s. Then, using the estimation above, along with the facts that $z \in \mathcal{H}^2_\beta$, $u \in \mathfrak{L}^p_{\beta}$, assumption \textsc{(H4)}, and the Lebesgue dominated convergence theorem, we conclude that $\left\{\left(Y^n,Z^n,U^n\right)\right\}_{n \geq 1}$ is a Cauchy sequence in $\mathcal{E}^p_\beta(0,T)$. Thus, there exists a triplet of stochastic processes $(Y,Z,U)$ that represents the limiting process of the sequence $\left\{(Y^n,Z^n,U^n)\right\}_{n \geq 1}$ in the space $\mathcal{E}^p_\beta(0,T)$. By the BDG inequality, we have
	\begin{equation*}
		\begin{split}
			\lim\limits_{n \rightarrow+\infty}\mathbb{E}\left[\sup_{0\leq t\leq T} \left|\int_{t}^{T}Z^n_s dW_s-\int_{t}^{T}Z_s dW_s\right|^p\right] \leq \mathfrak{c}\lim\limits_{n \rightarrow+\infty}\mathbb{E}\left[\left(\int_{0}^{T}\big\|Z^n_s-Z_s\big\|^2ds\right)^{\frac{p}{2}}\right] =0.
		\end{split}
	\end{equation*}
	Similarly, we derive
	\begin{equation*}
		\begin{split}
			\lim\limits_{n \rightarrow+\infty}\mathbb{E}\left[\sup_{0\leq t\leq T} \left|\int_{t}^{T} \int_{\mathcal{U}} U^n_s(e) \tilde{\mu}(ds,de)-\int_{t}^{T}\int_E U_s(e) \tilde{\mu}(ds,de)\right|^p\right]=0.
		\end{split}
	\end{equation*}
	Note that $\big|q_n(\xi)\big|^p \leq \big|\xi\big|^p$ and $\lim\limits_{n \rightarrow +\infty} q_n(\xi) = \xi$ a.s. Then, using assumption $(\mathcal{H}1)$ and the dominated convergence theorem, we obtain 
	$$
	\lim\limits_{n \rightarrow +\infty} \mathbb{E}\left[e^{\frac{p}{2}\beta A_T} \big|q_n(\xi) - \xi\big|^p\right] = 0.
	$$
	Next, applying the estimation \eqref{UI p less than 2} and passing to the limit as $n \rightarrow +\infty$, along with Fatou's Lemma and estimation \eqref{Generator p less than 2}, we obtain
	\begin{equation}\label{UI p less than 2--limmiting}
		\resizebox{\textwidth}{!}{$
			\begin{split}
				&\mathbb{E}\left[\sup_{t \in [0,T]} e^{\frac{p}{2}\beta A_{t  }}\big|{Y}_{t}\big|^p \right]+\mathbb{E}\left[ \int_{0}^{T}e^{\frac{p}{2}\beta A_{s}}\big|{Y}_{s}\big|^p dA_s\right] +\mathbb{E}\left[\left(\int_{0}^{T}e^{\beta A_s} \big\|{Z}_s\big\|^2 ds\right)^{\frac{p}{2}}\right]\\
				&+\mathbb{E}\left[\left(\int_{0}^{T}e^{\beta A_s}\int_{\mathcal{U}} \big|{U}_s(e)\big|^2 \mu(ds,de)\right)^{\frac{p}{2}}\right]+\mathbb{E}\left[\left(\int_{0}^{T}e^{\beta A_s} \big\|{U}_s\big\|^2_{\mathbb{L}^2_\lambda} ds\right)^{\frac{p}{2}}\right]\\
				&\leq   \mathfrak{C}_{p,\beta,\epsilon,T} \left(\mathbb{E}\left[ e^{\frac{p}{2}\beta A_T}\big|{\xi}\big|^p\right] +\mathbb{E}\left[\left(\int_{0}^{T}e^{\beta A_s}\big\|z_s\big\|^2 ds \right)^{\frac{p}{2}}\right] +\mathbb{E}\left[\left(\int_{0}^{T}e^{\beta A_s}\big\|u_s\big\|^2_{\mathbb{L}^2_\lambda} ds\right)^{\frac{p}{2}}\right]+\mathbb{E}\left[ \int_{0}^{T}e^{\beta A_s}\left|\varphi_s\right|^p ds\right] \right).
			\end{split}
			$}
	\end{equation}
	Using the continuity of the function $f$ w.r.t. to the $y$-variable (assumption \textsc{(H6)}), we deduce that 
	$$
	\lim\limits_{n \rightarrow +\infty} \mathfrak{f}_n(t,Y^n_t) = f(t,Y_t,z_t,u_t) \quad \text{a.s.}
	$$
	Additionally, by reporfoming similar computations as in \eqref{K.3} and using the Lebesgue dominated convergence theorem, we can derive
	$$
	\lim\limits_{n \rightarrow +\infty}\mathbb{E}\left[\left(\int_{0}^{T}\left|\mathfrak{f}_n(s,Y^n_s)- f(s,Y_s,z_s,u_s)\right| ds\right)^p\right]=0.
	$$
	
	To conclude, from (\ref{Dhr}), we have
	$$K^n_t=Y^n_t-Y^n_0+\int_0^t \mathfrak{f}(s, Y^n_s) ds-\int_0^tZ^n_sdB_s-\int_0^t\int_{\mathcal{U}}V^n_s(e)\tilde{\mu}(ds,de).$$
	Then, using the previous convergence results (as in Theorem \ref{lem2}), we get
	$$
	\mathbb{E}\left[\sup_{0\leq t\leq T}|K_{t}^{n}-K_{t}^m|^{p}\right]\xrightarrow[n,m\to +\infty]{}0.
	$$
	It follows that $\{K^n\}_{n\geq0}$ is a Cauchy sequence in $\mathcal{S}^p$. Therefore, there exists an $\mathbb{F}$-predictable process $K\in\mathcal{K}^p$  and
	$$\mathbb{E}\left[\sup_{0\leq t\leq T}|K_{t}^{n}-K_{t}|^{p}\right]\xrightarrow[n\to +\infty]{}0.
	$$
	Finally, Passing to the limit term by term in \eqref{Dhr}, we deduce that $(Y,Z,U,K)$ is the unique $\mathbb{L}^p$-solution of the RBSDEJs \eqref{helpbsde}. Completing the proof.

\end{proof}

\subsection{The general case where the driver $f$ depends on $(z,u)$}
Let us announce the main result of this section:
\begin{theorem}\label{t1}
	Assume that $(\mathcal{H}1)$--$(\mathcal{H}3)$ hold for a sufficient large $\beta$. Then, the RBSDEJs \eqref{basic equation} associated with parameters $(\xi,f,L)$ has a unique $\mathbb{L}^p$-solution for any $p\in(1,2)$.
\end{theorem}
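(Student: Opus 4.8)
The plan is to resolve the fully coupled equation \eqref{basic equation} by a Banach fixed-point argument layered on top of Corollary~\ref{help}. For a frozen pair $(z,u) \in \mathcal{H}^{p}_\beta \times \mathfrak{L}^{p}_{\lambda,\beta}$ the driver $f(s,\cdot,z_s,u_s)$ no longer depends on the solution's own control variables, so Corollary~\ref{help} produces a unique quadruple $(Y,Z,U,K) \in \mathcal{E}^p_\beta$ solving the RBSDEJ \eqref{helpbsde}. This defines a map $\Psi : \mathcal{H}^{p}_\beta \times \mathfrak{L}^{p}_{\lambda,\beta} \to \mathcal{H}^{p}_\beta \times \mathfrak{L}^{p}_{\lambda,\beta}$ by $\Psi(z,u) := (Z,U)$, and by construction every fixed point $(z,u)=(Z,U)$ of $\Psi$, together with its associated $(Y,K)$, is precisely an $\mathbb{L}^p$-solution of \eqref{basic equation}. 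That $\Psi$ maps the space into itself, with all relevant norms finite, is exactly the content of estimate \eqref{UI p less than 2--limmiting}; hence everything reduces to showing that $\Psi$ is a strict contraction once $\beta$ is chosen large.

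First I would set up the difference estimate. Given two inputs $(z,u)$ and $(z',u')$ with outputs $(Y,Z,U,K)$ and $(Y',Z',U',K')$, both underlying problems are RBSDEJs with the common obstacle $L$, the common terminal value $\xi$, and drivers $g(s,y):=f(s,y,z_s,u_s)$ and $g'(s,y):=f(s,y,z'_s,u'_s)$ that are \emph{independent} of the $u$-variable. Proposition~\ref{Estimation p less stricly than 2} therefore applies verbatim to $\bar Y=Y-Y'$, $\bar Z=Z-Z'$, $\bar U=U-U'$ with $\bar\xi=0$; in particular the reflection contribution was already shown to be nonpositive there (cf. \eqref{Skoro1} and the analogue \eqref{new--generator}), so the increasing processes $K,K'$ need no separate treatment. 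The driver difference collapses to $f(s,Y'_s,z_s,u_s)-f(s,Y'_s,z'_s,u'_s)$, which by the stochastic Lipschitz condition $(\mathcal{H}2)$-(iii) is dominated in absolute value by $\eta_s|z_s-z'_s|+\delta_s\|u_s-u'_s\|_\lambda$.

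The crux, and the step I expect to be the main obstacle, is converting this pointwise bound into a genuine contraction. The difficulty is structural: for $p\in(1,2)$ the target norms have the shape $\mathbb{E}\big[(\int_0^T e^{\beta A_s}|\cdot|^2\,ds)^{p/2}\big]$, whereas the driver-difference term coming out of the a priori estimate is of the linear type $\mathbb{E}\big[\int_0^T e^{\beta A_s}(\eta_s|z-z'|+\delta_s\|u-u'\|_\lambda)^p\,ds\big]$, so no contraction constant can simply be read off. My plan is to exploit the Bielecki weight together with the defining relations $a_s^2=\phi_s+\eta_s^2+\delta_s^2$, $\zeta_s^2=a_s^{q}$ and $dA_s=\zeta_s^2\,ds$: since $\eta_s,\delta_s\le a_s$ and $a_s^2\ge\epsilon$, one pairs the driver difference against $|\bar Y_s|^{p-1}a_s$ (as in \eqref{new--generator}) so that the surplus $|\bar Y_s|^p$ lands against $dA_s$ and is absorbed by the term $\tfrac{p}{2}\beta\int e^{\frac p2\beta A_s}|\bar Y_s|^p\,dA_s$ on the left, while the remainder $e^{\frac p2\beta A_s}|a_s^{-1}(g-g')|^p$ is controlled using $\eta_s/a_s,\delta_s/a_s\le 1$. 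Tracking the exact $\beta$-dependence of every constant relative to $\mathfrak{C}_{p,\beta,\epsilon}$ — and, if needed, partitioning $[0,T]$ into finitely many subintervals on which the resulting factor is strictly below $1$ and then concatenating — is the delicate bookkeeping that makes $\Psi$ a contraction for $\beta$ sufficiently large.

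Once $\Psi$ is a contraction on the Banach space $\mathcal{H}^{p}_\beta\times\mathfrak{L}^{p}_{\lambda,\beta}$, the Banach fixed-point theorem yields a unique fixed point $(Z,U)$; the attached $(Y,K)$ from Corollary~\ref{help} completes a quadruple $(Y,Z,U,K)\in\mathcal{E}^p_\beta$ satisfying all three items of \eqref{basic equation}, which gives existence. For uniqueness, any two $\mathbb{L}^p$-solutions of \eqref{basic equation} have control parts that are both fixed points of $\Psi$ for the common data $(\xi,f,L)$, hence coincide; equivalently, one applies the a priori difference estimate of Proposition~\ref{Estimation p less stricly than 2} directly, now absorbing the full $u$-dependence of $f$ through $(\mathcal{H}2)$-(iii), to force $\bar Y=\bar Z=\bar U=0$ and therefore $\bar K=0$. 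This establishes Theorem~\ref{t1}.
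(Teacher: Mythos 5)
Your proposal is correct and takes essentially the same route as the paper's proof of Theorem~\ref{t1}: freeze $(z,u)$, invoke Corollary~\ref{help} to define $\Psi$, kill the reflection term via the Skorokhod condition and the $\alpha_s$-term via stochastic monotonicity, and then run exactly the weighted H\"older--Young absorption you describe (the surplus $|\bar Y_s|^p$ paired against $dA_s$ and absorbed by the $\frac{p}{2}\beta$-term for $\beta \geq 1+2(p-1)\varrho^{\frac{1}{p-1}}$, the remainder carrying the factor $\frac{(T-t)^{(2-p)/2}}{\varrho}$ thanks to $\eta_s,\delta_s\le a_s$), so that $\Psi$ is a strict contraction once $\varrho>\mathfrak{C}_{p,T,\epsilon}$. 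The only inessential difference is that you contract on the pair space $\mathcal{H}^{p}_\beta\times\mathfrak{L}^{p}_{\lambda,\beta}$ whereas the paper works on the triple $\mathcal{S}^{p,A}_\beta\times\mathcal{H}^p_\beta\times\mathfrak{L}^p_{\lambda,\beta}$ with an inert $y$-component, and your fallback of partitioning $[0,T]$ is not needed since the $1/\varrho$ factor already yields the contraction on the whole interval.
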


\begin{proof}
	Let $(y,z,v)\in\mathcal{S}^{p,A}_\beta\times\mathcal{H}^p_\beta\times\mathfrak{L}^p_{\lambda,\beta}$ and we define $(Y,Z,V)=\Psi(y,z,v)$ where $(Y,Z,V,K)$ is the unique $L^p$-solution of the RBSDEJs (\ref{helpbsde}). For another element $(y',z',v')\in\mathcal{S}^{p,A}_\beta\times\mathcal{H}^p_\beta\times\mathfrak{L}^p_{\lambda,\beta}$, we define in the same way $(Y',Z',V')=\Psi(y',z',v')$ where $(Y',Z',V',K')$ is the unique $\mathbb{L}^p$-solution of the RBSDEJs associated with parameters $(\xi,f(\cdot,\cdot,z',v'),L)$. We denote $\bar{\Re}=\Re-\Re'$ for $\Re\in\{Y,Z,V,K,Y',Z',V',K'\}$. We are going to prove that the mapping $\Psi$ is a contraction strict on $\mathcal{S}^{p,A}_\beta\times\mathcal{H}^p_\beta\times\mathfrak{L}^p_{\lambda,\beta}$ equipped with the norm
	$$\|(Y,Z,V\|_{\mathcal{S}^{p,A}_\beta\times\mathcal{H}^{p}_\beta\times\mathfrak{L}^{p}_{\lambda,\beta}}^p
	:=\|Y\|_{\mathcal{S}^{p,A}_\beta}^p+\|Z\|_{\mathcal{H}^{p}_\beta}^p+\|V\|_{\mathfrak{L}^{p}_{\lambda,\beta}}^p.$$
	Actually, by applying the Corollary \ref{cor} in combination with Lemma \ref{lem1}, we have
	\begin{eqnarray}\label{eq1}
		&&e^{\frac{p}{2}\beta A_t}|\bar{Y}_t|^p+\frac{p}{2}\beta\int_t^Te^{\frac{p}{2}\beta A_s}|\bar{Y}_s|^pdA_s+c(p)\int_t^Te^{\frac{p}{2}\beta A_s}|\bar{Y}_s|^{p-2}|\bar{Z}_s|^2\mathds{1}_{\{\bar{Y}_s\neq0\}}ds\nonumber\\
		&&+c(p)\int_t^Te^{\frac{p}{2}\beta A_s}\int_{\mathcal{U}}|\bar{U}_s(e)|^2\left(|\bar{Y}_{s-}|^{2}\vee|\bar{Y}_{s-}+\bar{U}_s(e)|^2\right)^{\frac{p-2}{2}}\mathds{1}_{\{|\bar{Y}_{s-}|\vee|\bar{Y}_{s-}+\bar{U}_s(e)|\neq 0\}}\mu(ds,de)\nonumber\\
		&\leq&p\int_t^Te^{\frac{p}{2}\beta A_s}|\bar{Y}_s|^{p-1}\hat{\bar{Y}}_{s}\left(f(s,Y_s, z_s,u_s)-f(s,Y'_s, z'_s,u'_s)\right)ds+p\int_t^Te^{\frac{p}{2}\beta A_s}|\bar{Y}_{s-}|^{p-1}\hat{\bar{Y}}_{s-}d\bar{K}_s\nonumber\\
		&&-p\int_t^Te^{\frac{p}{2}\beta A_s}|\bar{Y}_s|^{p-1}\hat{\bar{Y}}_s\bar{Z}_sdB_s
		-p\int_t^T\int_{\mathcal{U}}e^{\frac{p}{2}\beta A_s}|\bar{Y}_{s-}|^{p-1}\hat{\bar{Y}}_{s-}\bar{U}_s(e)\tilde{\mu}(ds,de).
	\end{eqnarray}
	By employing a similar localization procedure as in Proposition \ref{Estimation p less stricly than 2} and taking the expectation on both sides of \eqref{eq1}, we get
	\begin{equation}\label{general case Ito less than 2}
		\begin{split}
			&\mathbb{E}\left[ e^{\frac{p}{2}\beta A_{t} }\big|\widehat{Y}_{t}\big|^p\right] +\frac{p}{2}\beta\mathbb{E}\left[ \int_{t  }^{T}e^{\frac{p}{2}\beta A_s} \big|\bar{Y}_s\big|^p dA_s\right] 
			+c(p)\mathbb{E}\left[ \int_{t}^{T}e^{\frac{p}{2}\beta A_s} \big|\bar{Y}_s\big|^{p-2}\big\| \bar{Z}_s \big\|^2 \mathds{1}_{\{\widehat{Y}_{s} \neq 0\}} ds\right] \\
			&+c(p)\mathbb{E}\left[ \int_t^Te^{\frac{p}{2}\beta A_s}\int_{\mathcal{U}}|\bar{U}_s(e)|^2\left(|\bar{Y}_{s-}|^{2}\vee|\bar{Y}_{s-}+\bar{U}_s(e)|^2\right)^{\frac{p-2}{2}}\mathds{1}_{\{|\bar{Y}_{s-}|\vee|\bar{Y}_{s-}+\bar{U}_s(e)|\neq 0\}}\lambda(de)ds\right] \\
			\leq& p\mathbb{E}\left[ \int_{t }^{T}e^{\frac{p}{2}\beta A_s} \big|\bar{Y}_s\big|^{p-1}\hat{\bar{Y}}_s  \left(f(s,Y_s, z_s,u_s)-f(s,Y'_s, z'_s,u'_s)\right)ds\right], 
		\end{split}
	\end{equation}	
	where we have used again the fact that 
	$$
	\int_t^T e^{\frac{p}{2}\beta A_s} |\bar{Y}_{s-}|^{p-1} \hat{\bar{Y}}_{s-} \, d\bar{K}_s \leq 0,
	$$
	in view of the identities \( dK_s = \mathds{1}_{\{Y_{s-} = L_{s-}\}} \, dK_s \) and \( dK'_s = \mathds{1}_{\{Y'_{s-} = L_{s-}\}} \, dK'_s \), together with the result from Lemma \ref{rem1}.
	
	Using assumptions $(\mathcal{H}2)$-(ii)-(iii) and Remark \ref{rmq essential}, we get 
	\begin{equation}\label{general case gene p less than 2}
		\begin{split}
			&\big|\bar{Y}_s\big|^{p-1}\hat{\bar{Y}_s}  \left(f(s,Y_s, z_s,u_s)-f(s,Y'_s, z'_s,u'_s)\right)\\
			&\leq \big|\bar{Y}_s\big|^{p-2}\mathds{1}_ {\{\bar{Y}_s \neq 0\}}\left(\alpha_s \big|\bar{Y}_s\big|^2+ \big|\bar{Y}_s\big| \big\|\bar{z}_s\big\|\delta_s+ \big|\bar{Y}_s\big| \big\|\bar{u}_s\big\|_{\mathbb{L}^2_\lambda} \eta_s \right)\\
			& \leq \alpha_s \big|\bar{Y}_s\big|^{p}  +\big|\bar{Y}_s\big|^{p-1} \big\|\bar{z}_s\big\|\delta_s+ \big|\bar{Y}_s\big|^{p-1} \big\|\bar{u}_s\big\|_{\mathbb{L}^2_\lambda}\eta_s \\
			& \leq \big|\bar{Y}_s\big|^{p-1} \big\|\bar{z}_s\big\|\delta_s+ \big|\bar{Y}_s\big|^{p-1} \big\|\bar{u}_s\big\|_{\mathbb{L}^2_\lambda}\eta_s .
		\end{split}
	\end{equation}
	On the other hand, using Hölder's inequality, Young's and Jensen's inequalities, for any $\varrho >0$, we obtain 
	\begin{equation*}
		\begin{split}
			\int_{t}^{T}e^{\frac{p}{2}\beta A_s }\big|\bar{Y}_s\big|^{p-1} \big|\bar{z}_s\big| \delta_s ds 
			&\leq  \left(\int_{t}^{T}e^{\frac{p}{2}\beta A_s }\big|\bar{Y}_s\big|^{p}dA_s \right)^{\frac{p-1}{p}}\left(\int_{t}^{T}e^{\frac{p}{2} \beta A_s} \big|\bar{z}_s\big|^p ds\right)^{\frac{1}{p}}\\
			&\leq \frac{(p-1)\varrho^{\frac{1}{p-1}}}{p} \int_{t}^{T}e^{\frac{p}{2}\beta A_s }\big|\bar{Y}_s\big|^{p}dA_s+\frac{(T-t)^{\frac{2-p}{2}}}{p\varrho} \left(\int_{t}^{T}e^{\beta A_s} \big\|\bar{z}_s\big\|^2 ds\right)^{\frac{p}{2}}.
		\end{split}
	\end{equation*}
	Similarly, we can show that 
	$$
	\int_{t}^{T}e^{\frac{p}{2}\beta A_s }\big|\bar{Y}_s\big|^{p-1} \big\|\bar{u}_s\big\|_{\mathbb{L}^2_\lambda} \eta_s ds \leq  \frac{(p-1)\varrho^{\frac{1}{p-1}}}{p} \int_{t}^{T}e^{\frac{p}{2}\beta A_s }\big|\bar{Y}_s\big|^{p}dA_s+\frac{(T-t)^{\frac{2-p}{2}}}{p\varrho} \left(\int_{t}^{T}e^{\beta A_s} \big\|\bar{u}_s\big\|^2_{\mathbb{L}^2_\lambda} ds\right)^{\frac{p}{2}}.
	$$
	Now, taking the expectation in the above two inequalities and applying Young's inequality, we obtain, for any $\varrho > 0$,
	\begin{equation}\label{genera well estimates}
		\begin{split}
			&\mathbb{E}\left[ \int_{t}^{T}e^{\frac{p}{2}\beta A_s }\big|\bar{Y}_s\big|^{p-1} \big|\bar{z}_s\big| \delta_s ds\right] +\mathbb{E}\left[ \int_{t}^{T}e^{\frac{p}{2}\beta A_s }\big|\bar{Y}_s\big|^{p-1} \big\|\bar{u}_s\big\|_{\mathbb{L}^2_\lambda} \eta_s ds\right] \\
			& \leq 2\frac{(p-1)\varrho^{\frac{1}{p-1}}}{p}\mathbb{E}\left[ \int_{t}^{T}e^{\frac{p}{2}\beta A_s }\big|\bar{Y}_s\big|^{p} dA_s\right] +\frac{(T-t)^{\frac{2-p}{2}}}{p\varrho} \left(\mathbb{E}\left[\left(\int_{t}^{T}e^{\beta A_s} \big|\bar{z}_s\big|^2 ds\right)^{\frac{p}{2}}\right]  \right.\\
			&\left.\qquad+\mathbb{E}\left[ \left(\int_{t}^{T}e^{\beta A_s} \big\|\bar{u}_s\big\|^2_{\mathbb{L}^2_\lambda} ds\right)^{\frac{p}{2}}\right] +\mathbb{E}\left[\left(\int_{t}^{T}e^{\beta A_s} \int_{\mathcal{U}} \big|\bar{u}_s(e)\big|^2 \mu(ds,de)\right)^{\frac{p}{2}}\right] \right).
		\end{split}
	\end{equation}
	Coming back to \eqref{general case Ito less than 2}, and using \eqref{general case gene p less than 2} and \eqref{genera well estimates}, we obtain
	\begin{equation*}
		\begin{split}
			&\mathbb{E}\left[ e^{\frac{p}{2}\beta A_{t} }\big|\bar{Y}_{t}\big|^p\right] +\frac{p}{2}\beta\mathbb{E}\left[ \int_{t  }^{T}e^{\frac{p}{2}\beta A_s} \big|\bar{Y}_s\big|^p dA_s\right] 
			+c(p)\mathbb{E}\left[ \int_{t}^{T}e^{\frac{p}{2}\beta A_s} \big|\bar{Y}_s\big|^{p-2}\big\| \bar{Z}_s \big\|^2 \mathds{1}_{\{\bar{Y}_{s} \neq 0\}} ds\right] \\
			&+c(p)\mathbb{E}\left[ \int_t^Te^{\frac{p}{2}\beta A_s}\int_{\mathcal{U}}|\bar{U}_s(e)|^2\left(|\bar{Y}_{s-}|^{2}\vee|\bar{Y}_{s-}+\bar{U}_s(e)|^2\right)^{\frac{p-2}{2}}\mathds{1}_{\{|\bar{Y}_{s-}|\vee|\bar{Y}_{s-}+\bar{U}_s(e)|\neq 0\}}\mu(ds,de)\right] \\
			&\leq 2(p-1)\varrho^{\frac{1}{p-1}} \mathbb{E}\left[ \int_{t}^{T}e^{\frac{p}{2}\beta A_s }\big|\widehat{Y}_s\big|^{p-1} dA_s\right] +\frac{(T-t)^{\frac{2-p}{2}}}{\varrho} \left(\mathbb{E}\left[ \int_{t}^{T}e^{\frac{p}{2}\beta A_s} \big|\bar{y}_s\big|^p dA_s\right] \right.\\
			&\left.\qquad+\mathbb{E}\left[\left(\int_{t}^{T}e^{\beta A_s} \big|\bar{z}_s\big|^2 ds\right)^{\frac{p}{2}}\right] +\mathbb{E}\left[\left(\int_{t}^{T}e^{\beta A_s} \big\|\bar{u}_s\big\|^2_{\mathbb{L}^2_\lambda} ds\right)^{\frac{p}{2}}\right]\right.\\
			&\left.\qquad+\mathbb{E}\left[\left(\int_{t}^{T}e^{\beta A_s} \int_{\mathcal{U}} \big|\bar{u}_s(e)\big|^2 \mu(ds,de)\right)^{\frac{p}{2}}\right] \right).
		\end{split}
	\end{equation*}
	It is worth noting that for any $\beta$ such that $\beta \geq 1  + 2(p-1)\varrho^{\frac{1}{p-1}}$, we deduce from the above estimation and a similar localization approach as in Proposition \ref{Estimation p less stricly than 2} that
	\begin{equation}\label{Using in global case}
		\begin{split}
			&\mathbb{E}\left[ \int_{0}^{T}e^{\frac{p}{2}\beta A_s} \big|\bar{Y}_s\big|^p dA_s\right] 
			+c(p)\mathbb{E}\left[ \int_{0}^{T}e^{\frac{p}{2}\beta A_s} \big|\bar{Y}_s\big|^{p-2}\big| \bar{Z}_s \big|^2 \mathds{1}_{\{\bar{Y}_{s} \neq 0\}} ds\right] \\
			&+ c(p)\mathbb{E}\left[ \int_0^Te^{\frac{p}{2}\beta A_s}\int_{\mathcal{U}}|\bar{U}_s(e)|^2\left(|\bar{Y}_{s-}|^{2}\vee|\bar{Y}_{s-}+\bar{U}_s(e)|^2\right)^{\frac{p-2}{2}}\mathds{1}_{\{|\bar{Y}_{s-}|\vee|\bar{Y}_{s-}+\bar{U}_s(e)|\neq 0\}}\mu(ds,de)\right] \\
			&+c(p)\mathbb{E}\left[ \int_0^Te^{\frac{p}{2}\beta A_s}\int_{\mathcal{U}}\left(|\bar{Y}_{s-}|^{2}\vee|\bar{Y}_{s-}+\bar{U}_s(e)|^2\right)^{\frac{p-2}{2}}\mathds{1}_{\{|\bar{Y}_{s-}|\vee|\bar{Y}_{s-}+\bar{U}_s(e)|\neq 0\}}|\bar{U}_s(e)|^2\lambda(de)ds\right] \\
			&\leq \frac{T^{\frac{2-p}{2}}}{\varrho}  \left(\mathbb{E}\left[ \int_{0 }^{T}e^{\frac{p}{2}\beta A_s} \big|\bar{y}_s\big|^p dA_s\right] +\mathbb{E}\left[\left(\int_{0}^{T}e^{\beta A_s} \big|\bar{z}_s\big|^2 ds\right)^{\frac{p}{2}}\right] +\mathbb{E}\left[ \left(\int_{0}^{T}e^{\beta A_s} \big\|\bar{u}_s\big\|^2_{\mathbb{L}^2_\lambda} ds\right)^{\frac{p}{2}}\right] \right).
		\end{split}
	\end{equation}
	Next, it suffices to repeat the arguments used in Proposition \ref{Estimation p less stricly than 2}, taking into account \eqref{Using in global case}. For the reader's convenience, we briefly outline them. Returning to \eqref{eq1} and applying the BDG inequality, we obtain, for any $\varrho' > 0$,
	\begin{equation*}
		\begin{split}
			&p\mathbb{E}\left[\sup_{t \in [0,T]}\left|\int_{t}^{T}e^{\frac{p}{2}\beta A_s} \big|\bar{Y}_s\big|^{p-1} \check{\bar{Y}}_s \bar{Z}_s dW_s\right|\right]\\
			&\leq \frac{1}{2 \varrho'} \mathbb{E}\left[\sup_{t \in [0,T]}e^{\frac{p}{2}\beta}\big|\bar{Y}_s\big|^{p}\right]+\frac{\varrho'}{2}p^2 \mathfrak{c}^2\mathbb{E}\left[\int_{0}^{T}e^{\frac{p}{2}\beta A_s} \big|\bar{Y}_s\big|^{p-2} \big|\bar{Z}_s\big|^2 \mathds{1}_{\{\bar{Y}_s \neq 0\}} ds\right].
		\end{split}
	\end{equation*}
	The last line follows from the basic inequality $ab \leq \frac{1}{2 \varrho} a^2 + \frac{\varrho}{2} b^2$ for any $\varrho > 0$ (recall that $\mathfrak{c} > 0$ is the universal constant in the BDG inequality). Similarly, by a comparable computation, for any $\varrho'' > 0$, we obtain
	\begin{equation*}
		\begin{split}
			&p\mathbb{E}\left[\sup_{t \in [0,T]}\left|\int_{t}^{T}e^{\frac{p}{2}\beta A_s}\int_{E} \big|\bar{Y}_{s-}\big|^{p-1} \hat{\bar{Y}}_{s-} \bar{U}_s(e)\tilde{\mu}(ds,de)\right|\right]\\
			& \leq \frac{1}{2\varrho''}\mathbb{E}\left[\sup_{0\leq t\leq T}e^{\frac{p}{2}\beta A_t}|\bar{Y}_{t}|^{p}\right]\\
			&\qquad+\frac{\varrho''}{2}p^2 \mathfrak{c}^2\mathbb{E}\left[ \int_0^Te^{\frac{p}{2}\beta A_s}\int_{\mathcal{U}}\left(|\bar{Y}_{s-}|^{2}\vee|\bar{Y}_{s-}+\bar{U}_s(e)|^2\right)^{\frac{p-2}{2}}\mathds{1}_{\{|\bar{Y}_{s-}|\vee|\bar{Y}_{s-}+\bar{U}_s(e)|\neq 0\}}|\bar{U}_s(e)|^2 \mu(ds,de)\right] .
		\end{split}
	\end{equation*}
	Plugging this into \eqref{eq1} and taking the supremum on both sides, we obtain
	\begin{equation*}
		\begin{split}
			&\left(1-\frac{1}{2}\left(\frac{1}{\varrho'}+\frac{1}{\varrho''}\right)\right)\mathbb{E}\left[\sup_{t \in [0,T]} e^{\frac{p}{2}\beta A_{t  }}\big|\bar{Y}_{t}\big|^p \right]\\
			\leq&  \frac{p^2 \mathfrak{c}^2}{2}\left(\frac{\varrho'}{c(p)}c(p)\mathbb{E}\left[\int_{0}^{T}e^{\frac{p}{2}\beta A_s} \big|\bar{Y}_s\big|^{p-2} \big|\bar{Z}_s\big|^2 \mathds{1}_{\{\bar{Y}_s \neq 0\}} ds\right]\right. \\
			&\left.\quad+\frac{\varrho''}{c(p)}c(p)\mathbb{E}\left[\int_0^Te^{\frac{p}{2}\beta A_s}\int_{\mathcal{U}}\left(|\bar{Y}_{s-}|^{2}\vee|\bar{Y}_{s-}+\bar{U}_s(e)|^2\right)^{\frac{p-2}{2}}\mathds{1}_{\{|\bar{Y}_{s-}|\vee|\bar{Y}_{s-}+\bar{U}_s(e)|\neq 0\}}|\bar{U}_s(e)|^2 \mu(ds,de)\right]\right).
		\end{split}
	\end{equation*}
	Choosing $\varrho'$ and $\varrho''$ such that $\frac{1}{2} \left(\frac{1}{\varrho'} + \frac{1}{\varrho''}\right) < 1$ and $\varrho' \vee \varrho'' < \mathfrak{b}_p$, we deduce the existence of a constant $\mathfrak{C}_p$ such that
	\begin{equation}\label{sup for p less than 2--global}
		\begin{split}
			&\mathbb{E}\left[\sup_{t \in [0,T]} e^{\frac{p}{2}\beta A_{t  }}\big|\bar{Y}_{t}\big|^p \right]\\
			\leq& \mathfrak{C}_p\left(\mathbb{E}\left[\int_{0}^{T}e^{\frac{p}{2}\beta A_s} \big|\bar{Y}_s\big|^{p-2} \big\|\widehat{Z}_s\big\|^2 \mathds{1}_{\{\bar{Y}_s \neq 0\}} ds\right] \right.\\
			&\left. ~ +\mathbb{E}\left[\int_0^T\int_{E}e^{\frac{p}{2}\beta A_s}\left(|\bar{Y}_{s-}|^{2}\vee|\bar{Y}_{s-}+\bar{U}_s(e)|^2\right)^{\frac{p-2}{2}}\mathds{1}_{\{|\bar{Y}_{s-}|\vee|\bar{Y}_{s-}+\bar{U}_s(e)|\neq 0\}}|\bar{U}_s(e)|^2 \mu(ds,de)\right]\right)\\
			&\leq \frac{\mathfrak{C}_p}{\varrho} \left(\mathbb{E}\int_{0 }^{T}e^{\frac{p}{2}\beta A_s} \big|\bar{y}_s\big|^p dA_s+	\mathbb{E}\left[\left(\int_{0}^{T}e^{\beta A_s} \big|\bar{z}_s\big|^2 ds\right)^{\frac{p}{2}}\right]  \right.\\
			&\left.\quad+	\mathbb{E}\left[\left(\int_{0}^{T}e^{\beta A_s} \big\|\bar{u}_s\big\|^2_{\mathbb{L}^2_\lambda} ds\right)^{\frac{p}{2}}\right] +\mathbb{E}\left[\left(\int_{t}^{T}e^{\beta A_s} \int_{\mathcal{U}} \big|\bar{u}_s(e)\big|^2 \mu(ds,de)\right)^{\frac{p}{2}}\right] \right).
		\end{split}
	\end{equation}
	From these estimations, and again using Proposition \ref{Estimation p less stricly than 2}, we obtain, for any $\varrho_\ast > 0$,
	\begin{equation*}
		\begin{split}
			&\mathbb{E}\left[\left(\int_{0}^{T}e^{\beta A_s}\int_{E} \big|\bar{U}_s(e)\big|^2 N(ds,de)\right)^{\frac{p}{2}}\right]\\
			& \leq \frac{2-p}{2}\varrho_\ast^{\frac{4}{p(2-p)}}\mathbb{E}\left[\sup_{t \in [0,T]} e^{\frac{p}{2}\beta A_{t  }}\big|\bar{Y}_{t}\big|^p\right]\\
			&\quad+\frac{p}{2 \varrho_\ast c(p)} c(p)\mathbb{E}\left[\int_{0}^{T}e^{\frac{p}{2}\beta A_s}\int_{\mathcal{U}}\big|\bar{U}_s(e)\big|^2\left(|\bar{Y}_{s-}|^{2}\vee|\bar{Y}_{s-}+\bar{U}_s(e)|^2\right)^{\frac{p-2}{2}}\mathds{1}_{\{|\bar{Y}_{s-}|\vee|\bar{Y}_{s-}+\bar{U}_s(e)|\neq 0\}} \mu(ds,de)\right],
		\end{split}
	\end{equation*}
	\begin{equation*}
		\begin{split}
			&\mathbb{E}\left[\left(\int_{0}^{T}e^{\beta A_s} \big\|\bar{U}_s\big\|^2_{\mathbb{L}^2_\lambda} ds\right)^{\frac{p}{2}}\right]\\
			& \leq \frac{2-p}{2}\varrho_\ast^{\frac{4}{p(2-p)}}\mathbb{E}\left[\sup_{t \in [0,T]} e^{\frac{p}{2}\beta A_{t  }}\big|\bar{Y}_{t}\big|^p\right]\\
			&\quad+\frac{p}{2 \varrho_\ast c(p)} c(p) \mathbb{E}\left[\int_{0}^{T}e^{\frac{p}{2}\beta A_s}\int_{\mathcal{U}}\left(|\bar{Y}_{s-}|^{2}\vee|\widehat{Y}_{s-}+\bar{U}_s(e)|^2\right)^{\frac{p-2}{2}}\mathds{1}_{\{|\bar{Y}_{s-}|\vee|\bar{Y}_{s-}+\bar{U}_s(e)|\neq 0\}}\big|\bar{U}_s(e)\big|^2 \lambda(de)ds\right],
		\end{split}
	\end{equation*}
	and
	\begin{equation*}
		\begin{split}
			\mathbb{E}\left[\left(\int_{0}^{T}e^{\beta A_s} \big|\bar{Z}_s\big|^2 ds\right)^{\frac{p}{2}}\right]
			&\leq \frac{2-p}{2}\varrho_\ast^{\frac{4}{p(2-p)}}\mathbb{E}\left[\sup_{t \in [0,T]} e^{\frac{p}{2}\beta A_{t}}\big|\widehat{Y}_{t}\big|^p\right]\\
			&\quad+\frac{p}{2 \varrho_\ast c(p)} c(p)\mathbb{E}\left[\int_{0}^{T}e^{\frac{p}{2 }\beta A_s}\big|\bar{Y}_{s}\big|^{p-2} \big|\bar{Z}_s\big|^2 \mathds{1}_{\{\bar{Y}_{s}\neq 0\} }ds\right].
		\end{split}
	\end{equation*}
	After summing the three terms above, it suffices to choose a suitable $\varrho_\ast$ such that $\frac{3(2-p)}{2}\varrho_\ast^{\frac{4}{p(2-p)}} + \frac{p}{2 \varrho_\ast \mathfrak{b}_p} < 1$, with the same choice of $\beta$ as above, to obtain
	\begin{equation}
		\begin{split}
			&\mathbb{E}\left[\int_{0}^{T}e^{\frac{p}{2}\beta A_s} \big|\bar{Y}_s\big|^p dA_s\right]+\mathbb{E}\left[\left(\int_{0}^{T}e^{\beta A_s} \big|\bar{Z}_s\big|^2 ds\right)^{\frac{p}{2}}\right]\\
			&+\mathbb{E}\left[\left(\int_{0}^{T}e^{\beta A_s}\int_{E} \big|\bar{U}_s(e)\big|^2 \mu(ds,de)\right)^{\frac{p}{2}}\right]+\mathbb{E}\left[\left(\int_{0}^{T}e^{\beta A_s} \big\|\widehat{U}_s\big\|^2_{\mathbb{L}^2_\lambda} ds\right)^{\frac{p}{2}}\right]\\
			&\leq \frac{\mathfrak{C}_{p,T,\epsilon}}{\varrho} \left(\mathbb{E}\int_{0 }^{T}e^{\frac{p}{2}\beta A_s} \big|\bar{y}_s\big|^p dA_s+\left(\int_{0}^{T}e^{\beta A_s} \big|\bar{z}_s\big|^2 ds\right)^{\frac{p}{2}} \right.\\
			&\left.\qquad+\left(\int_{0}^{T}e^{\beta A_s} \big\|\bar{u}_s\big\|^2_{\mathbb{L}^2_\lambda} ds\right)^{\frac{p}{2}}+\mathbb{E}\left[\left(\int_{0}^{T}e^{\beta A_s}\int_{E} \big|\bar{u}_s(e)\big|^2 \mu(ds,de)\right)^{\frac{p}{2}}\right]\right).
		\end{split}
	\end{equation}
	Finally, by choosing $\varrho$ large enough such that $\varrho > \mathfrak{C}_{p,T,\epsilon}$, we deduce that the functional $\Psi$ is a strict contraction mapping on $\mathcal{S}^{p,A}_\beta \times \mathcal{H}^p_\beta \times \mathfrak{L}^p_{\lambda,\beta}$. Hence, there exists a unique fixed point $(Y, Z, U)$ of $\Psi$ which, together with $K$, forms the unique $\mathbb{L}^p$-solution of the RBSDEJs \eqref{basic equation} associated with the parameters $(\xi, f, L)$. The proof is now complete.
\end{proof}


			\end{document}